

\documentclass[final,1p,times]{elsarticle}


\usepackage{amsthm,amsmath,amssymb,amsfonts,graphicx,graphics,latexsym,exscale,cmmib57,dsfont,bbm,amscd,ulem}
\usepackage{mathrsfs}
\usepackage{color,soul}
\usepackage[colorlinks=true]{hyperref}



\frenchspacing
\newcommand{\N}{\boldsymbol{N}}
\newcommand{\F}{\boldsymbol{F}}
\newcommand{\X}{\boldsymbol{X}}
\newcommand{\T}{\boldsymbol{T}}

\newcommand{\x}{\boldsymbol{x}}

\newcommand{\bvarphi}{\boldsymbol{\varphi}}
\newcommand{\bX}{\boldsymbol{\pmb{\mathscr{X}}}}

\newcommand{\bea}{\begin{eqnarray}}
\newcommand{\eea}{\end{eqnarray}}
\newcommand{\bean}{\begin{eqnarray*}}
\newcommand{\eean}{\end{eqnarray*}}
\newtheorem{Thm}{Theorem}[section]
\newtheorem{cor}[Thm]{Corollary}
\newtheorem{prop}[Thm]{Proposition}
\newtheorem{Lem}[Thm]{Lemma}
\newtheorem*{AA-I}{Arzel\'{a}-Ascoli Theorem}
\newtheorem*{thm0.4}{Theorem~\ref{thm0.4}}

\theoremstyle{definition}
\newtheorem{defn}[Thm]{Definition}
\newtheorem*{remark-of-thm0.4}{Remarks of Theorem~\ref{thm0.4}}
\newtheorem*{remark-of-thm0.5}{Remarks of Theorem~\ref{thm0.5}}
\newtheorem*{remark-of-thm0.6}{Remarks of Theorem~\ref{thm0.6}}
\newtheorem*{remark-of-thm0.8}{Remarks of Theorem~\ref{thm0.8}}

\newtheorem{que}[Thm]{Question}

\newtheorem{remark}[Thm]{Remark}

\numberwithin{equation}{section}
\setcounter{section}{-1}
\journal{Nonlinearity (arXiv: 1604.00611v1 [math.DS], April 5)}

\begin{document}
\begin{frontmatter}

\title{\sc Pointwise convergence of ergodic averages of bounded measurable functions for amenable groups}

\author{Xiongping Dai}
\ead{xpdai@nju.edu.cn}
\address{Department of Mathematics, Nanjing University, Nanjing 210093, People's Republic of China}

\begin{abstract}
Given any amenable group $G$ (with a left Haar measure $|\cdot|$ or $dg$), we can select out a \textit{F{\o}lner subnet} $\{F_\theta;\theta\in\Theta\}$ from any left F{\o}lner net in $G$, which is \textit{$L^\infty$-admissible} for $G$, namely, for any Borel $G$-space $(X,\mathscr{X})$ and any $\varphi\in L^\infty(X,\mathscr{X})$, there exists some function $\varphi^*$ on $X$ such that
\begin{gather*}
\textit{Moore-Smith-}\lim_{\theta\in\Theta}\frac{1}{|F_\theta|}\int_{F_\theta}\varphi(gx)dg=\varphi^*(x)\ \forall x\in X\quad {\textrm{and}}\quad
\varphi^*=(g\varphi)^*\ \forall g\in G.
\end{gather*}
Moreover, if $G$ is $\sigma$-compact such as a locally compact second countable Hausdorff amenable group, then $\varphi^*\in L^\infty(X,\mathscr{X})$, $\varphi^*(gx)=\varphi^*(x)$ \textit{a.e.}, and $\varphi^*$ is \textit{a.e.} independent of the choice of the $L^\infty$-admissible F{\o}lner net $\{F_\theta; \theta\in\Theta\}$ in $G$.

Consequently, we may easily obtain: (1) a Universal Ergodic Disintegration Theorem; (2) the Khintchine Recurrence Theorem for $\sigma$-compact amenable group; (3) the existence of $\sigma$-finite invariant Radon measures for any Borel action of an amenable group on a locally compact, $\sigma$-compact, metric space $X$ by continuous maps of $X$; and (4) an $L^\infty$-pointwise multiple ergodic theorem.
\end{abstract}

\begin{keyword}
$L^\infty$-pointwise (multi-) ergodic theorem $\cdot$ Ergodic decomposition $\cdot$ Recurrence theorem.

\medskip
\MSC[2010] Primary 37A30\sep 37A05; Secondary 37A15\sep 28A50\sep 22F50.
\end{keyword}
\end{frontmatter}

\section{Introduction}\label{sec0}
F{\o}lner sequences permit ergodic averages to be formed, and then the $L^p$-mean and $L^1$-pointwise ergodic theorems hold under ``suitable conditions'' for measure-preserving actions of $\sigma$-compact amenable groups; see \cite{Bew, Tem, Gre, Eme, EG, Fur, Shu, OW, Lin, EW, Nev, CF} and so on. Although the $L^1$-pointwise ergodic theorem has been well known over tempered F{\o}lner sequences since E.~Lindenstrauss 2001~\cite{Lin}, yet there has been no one over F{\o}lner net for general amenable group that does not have any F{\o}lner sequence at all. On the other hand, is there any other kind of admissible F{\o}lner sequences for pointwise convergence but the tempered one?

In this paper, we prove the $L^p$-mean ergodic theorem, $1\le p<\infty$, over any F{\o}lner net (cf.~Theorem~\ref{thm0.4}); and moreover, for any amenable group $G$, we can select out some F{\o}lner subnet from any F{\o}lner net in $G$, which is \textit{admissible} for the $L^\infty$-pointwise ergodic theorem for any Borel $G$-space $X$ (cf.~Theorem~\ref{thm0.5}). An important setting that is beyond the ergodic theorems available in the literature is the topological action on a compact metric space $X$:
\begin{itemize}
\item $G\curvearrowright X$, where $G$ is a \textit{discrete uncountable abelian} group.
\end{itemize}
But our results are valid for this case.

Consequently, as applications, we will present a new ergodic decomposition theorem for $\sigma$-compact amenable groups by only a concise one-page proof (cf.~Theorem~\ref{thm0.8}), the Khintchine recurrence theorem for any continuous action of $\sigma$-compact amenable groups (cf.~Theorem~\ref{thm0.9}), a existence of $\sigma$-finite invariant measure (cf.~Proposition~\ref{pro0.10}), and a simple $L^\infty$-pointwise convergence theorem of multiple ergodic averages for any topological module action on a measurable space (cf.~Theorem~\ref{thm0.11}).

\subsection{Amenability}\label{sec0.1}
By an \textbf{\textit{amenable}} group, in one of its equivalent characterizations, it always refers to a locally compact Hausdorff (lcH) group $G$, with a \textit{left} Haar measure $m_G$ (sometimes write $|\cdot|$ and $dg$ if no confusion) on the Borel $\sigma$-field $\mathscr{B}_G$ of $G$, for which it holds the left \textit{F{\o}lner condition}~\cite{Pat}: for any compacta $K$ of $G$ and $\varepsilon>0$, there exists a compacta $F$ of $G$ such that
$F$ is \textbf{\textit{$(K,\varepsilon)$-invariant}}; i.e., $|KF\vartriangle F|<\varepsilon|F|$,
where $A\vartriangle B=(A\setminus B)\cup(B\setminus A)$ is the symmetric difference.

Recall from \cite[Definition~4.15]{Pat}) that a net $\{K_\theta,\theta\in\Theta\}$ (where $(\Theta,\geqq)$ is a directed set) of compacta of positive Haar measure of $G$ is called a \textit{summing net} in $G$ if the following conditions are satisfied:
\begin{enumerate}
\item[(1)] $K_\theta\subseteq K_{\vartheta}$ if $\theta\leqq\vartheta$;
\item[(2)] $G=\bigcup_{\theta\in\Theta }K_\theta^{\circ}$ where $K^\circ$ is the interior of the set $K$;
\item[(3)] $\lim_{\theta\in\Theta}{|gK_\theta\vartriangle K_\theta|}\cdot|K_\theta|^{-1}=0$ uniformly for $g$ on compacta of $G$.
\end{enumerate}
Here $\lim_{\theta\in\Theta}$ is in the sense of Moore-Smith limit; cf.~\cite{Kel}. Clearly, if $G$ has a summing sequence it is $\sigma$-compact.

From now on let $G$ be an lcH group, then there holds the following very important lemma for our later arguments.

\begin{Lem}[{\cite[Theorem~4.16]{Pat}}]\label{lem0.1}
$G$ is amenable if and only if there
exists a summing net in $G$. $G$ is a $\sigma$-compact amenable group if and only
if there exists a summing sequence in $G$.
\end{Lem}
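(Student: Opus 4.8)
The statement is classical --- it is \cite[Theorem~4.16]{Pat} --- and both implications are best read as a repackaging of the F{\o}lner condition, so each reduces to F{\o}lner's theorem plus some bookkeeping. For the implication \emph{summing net $\Rightarrow$ amenable}, given a summing net $\{K_\theta\}$ I would pass to the associated means $\mu_\theta\in L^\infty(G)^\ast$ with $\langle\mu_\theta,\varphi\rangle=|K_\theta|^{-1}\int_{K_\theta}\varphi\,dg$; these are states, so a subnet converges weak-$\ast$ to a mean $\mu$. For a fixed $g\in G$, left-invariance of $dg$ gives $|K_\theta|^{-1}\int_{K_\theta}\varphi(gx)\,dx=|K_\theta|^{-1}\int_{gK_\theta}\varphi(x)\,dx$, whence $|\langle\mu_\theta,\varphi(g\,\cdot\,)\rangle-\langle\mu_\theta,\varphi\rangle|\le\|\varphi\|_\infty\,|gK_\theta\vartriangle K_\theta|\,|K_\theta|^{-1}\to0$ by condition (3) applied to the compactum $\{g\}$. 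Hence $\mu$ is a left-invariant mean on $L^\infty(G)$, which by the classical equivalences is exactly the F{\o}lner condition, i.e.\ the amenability of $G$. (Only the ``pointwise in $g$'' weakening of (3) is used here.)

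For \emph{amenable $\Rightarrow$ summing sequence} when $G$ is $\sigma$-compact: if $G$ is compact take $K_n\equiv G$. Otherwise $m_G(G)=\infty$, so $G$ carries compacta of arbitrarily large Haar measure. Fix an exhaustion $C_1\subseteq C_2\subseteq\cdots$ of $G$ by compacta with $C_n\subseteq C_{n+1}^\circ$ and $\bigcup_n C_n=G$, and build $K_n$ inductively: having $K_{n-1}$, choose a relatively compact open $U_n\supseteq C_n$, set $M_n=|K_{n-1}\cup\overline{U_n}|$, invoke F{\o}lner's theorem to get a compact $E_n$ that is $(C_n\cup C_n^{-1}\cup\{e\},\,1/n)$-invariant with $|E_n|>nM_n$, and put $K_n=E_n\cup K_{n-1}\cup\overline{U_n}$. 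Then (1) is built in, and $C_n\subseteq U_n\subseteq K_n^\circ$ gives $\bigcup_n K_n^\circ=G$, i.e.\ (2). For (3): any compact $C$ lies in some $C_N$, and for $n\ge N$ every $g\in C$ lies in $S_n:=C_n\cup C_n^{-1}\cup\{e\}$, so $|gE_n\vartriangle E_n|\le2\,|S_nE_n\vartriangle E_n|<\tfrac2n|E_n|$ and therefore
\[
\frac{|gK_n\vartriangle K_n|}{|K_n|}\ \le\ \frac{|gE_n\vartriangle E_n|+2M_n}{|K_n|}\ <\ \frac2n+\frac{2M_n}{|E_n|}\ <\ \frac4n\ \longrightarrow\ 0
\]
uniformly for $g\in C$, using $|E_n|\le|K_n|$ and $|E_n|>nM_n$. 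The sole role of $|E_n|>nM_n$ is to swallow the ``defect'' $2M_n$ produced by enlarging $E_n$ to secure (1) and (2); this is precisely where non-compactness is needed, and it is the crux of the construction.

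For a general, possibly non-$\sigma$-compact, amenable $G$ the same scheme runs over the directed set of pairs $(C,n)$, with $C$ a compact neighbourhood of $e$ and $(C,n)\le(C',n')$ iff $C\subseteq C'$ and $n\le n'$: to each such pair attach a compact $(C\cup C^{-1}\cup\{e\},1/n)$-invariant set of large measure, enlarged to contain $C$. Exhaustion (2) and the estimate (3) transcribe verbatim; the one genuinely delicate point --- and where I would expect the real effort to go --- is securing the monotonicity (1) of the \emph{chosen} compacta over an index set with no cofinal sequence, which I would handle by a suitable refinement of the index set as in \cite[Theorem~4.16]{Pat}. The classical input underneath everything is F{\o}lner's theorem itself --- amenability produces $(K,\varepsilon)$-invariant compacta, and a non-compact amenable group has such of arbitrarily large measure --- which I would simply quote; its usual proof is Namioka's convexity argument followed by the layer-cake passage from approximately invariant $L^1$-densities to approximately invariant sets.
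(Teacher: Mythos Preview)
The paper does not prove this lemma at all: it is stated with the citation \cite[Theorem~4.16]{Pat} and used thereafter as a black box. Your proposal therefore supplies content the paper deliberately omits. Your sketch is a reasonable reconstruction of the standard argument --- weak-$\ast$ limits of averaging states for one direction, and an inductive enlargement of F{\o}lner sets to absorb a prescribed exhaustion for the other --- and the estimates you write are correct (in particular the bound $|gK_n\vartriangle K_n|\le|gE_n\vartriangle E_n|+2M_n$ via $K_n=E_n\cup D_n$, and the symmetry of $S_n$ to control $|E_n\setminus gE_n|$). The one point you flag as ``genuinely delicate'' --- securing monotonicity~(1) over a non-$\sigma$-compact index set --- is indeed where Paterson's proof does extra work, and your deferral to \cite{Pat} there is appropriate. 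Since the paper itself defers the entire lemma to \cite{Pat}, there is nothing to compare at the level of argument; your write-up simply goes further than the paper does.
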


We will need the following two basic concepts.

\begin{defn}[\cite{Pat}]\label{def0.2}
Let $\{F_\theta; \theta\in\Theta\}$ be a net of compacta of positive Haar measure of $G$.
\begin{enumerate}
\item[(1)] $\{F_\theta; \theta\in\Theta\}$ is called a \textbf{\textit{F{\o}lner net}} in $G$ if and only if
\begin{itemize}
\item $\lim\limits_{\theta\in\Theta}\frac{|gF_\theta\vartriangle F_\theta|}{|F_\theta|}=0\ \forall g\in G$.
\end{itemize}
\item[(2)] $\{F_\theta; \theta\in\Theta\}$ is called a \textbf{$\mathscr{K}$-\textit{F{\o}lner net}} (or uniform F{\o}lner net) in $G$ if and only if
\begin{itemize}
\item $\lim\limits_{\theta\in\Theta}\frac{|KF_\theta\vartriangle F_\theta|}{|F_\theta|}=0$ for every compacta $K$ of $G$.
\end{itemize}
\end{enumerate}
One can analogously define \textbf{\textit{F{\o}lner sequence}} and \textbf{\textit{$\mathscr{K}$-F{\o}lner sequence}} in a $\sigma$-compact amenable group $G$.
\end{defn}

A $\mathscr{K}$-F{\o}lner sequence is simply called a F{\o}lner sequence in many literature like \cite{Lin, EW}. Clearly, every $\mathscr{K}$-F{\o}lner net/sequence in $G$ must be a F{\o}lner net/sequence; but the converse is never true if $G$ is not a discrete amenable group. In fact, one can easily construct a F{\o}lner sequence in $\mathbb{R}$, with the usual Euclidean topology, that is not a $\mathscr{K}$-F{\o}lner sequence (cf.~\cite[Problem~4.7]{Pat}).

We can then obtain the following fact from Lemma~\ref{lem0.1}.

\begin{Lem}\label{lem0.3}
Every amenable group $G$ has a ($\mathscr{K}$-)F{\o}lner net in it. Moreover, if $G$ is $\sigma$-compact, then it has a ($\mathscr{K}$-)F{\o}lner sequence.
\end{Lem}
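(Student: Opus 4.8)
The plan is to deduce Lemma~\ref{lem0.3} directly from Lemma~\ref{lem0.1}, which is the crucial input. First I would invoke the ``only if'' direction of Lemma~\ref{lem0.1}: since $G$ is amenable, there exists a summing net $\{K_\theta;\theta\in\Theta\}$ in $G$. I claim that such a summing net is already a $\mathscr{K}$-F{\o}lner net. Indeed, condition~(3) in the definition of a summing net asserts that $\lim_{\theta\in\Theta}|gK_\theta\vartriangle K_\theta|\cdot|K_\theta|^{-1}=0$ \emph{uniformly} for $g$ on compacta of $G$. So fix an arbitrary compactum $K$ of $G$; I must upgrade the pointwise-in-$g$ statement to the statement $\lim_{\theta\in\Theta}|KK_\theta\vartriangle K_\theta|\cdot|K_\theta|^{-1}=0$, where $KK_\theta=\bigcup_{g\in K}gK_\theta$.

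The key step is the following elementary inclusion: for any $g\in K$ and any $\theta$,
\begin{gather*}
KK_\theta\vartriangle K_\theta\ \subseteq\ \bigcup_{g\in K}\bigl(gK_\theta\vartriangle K_\theta\bigr)
\end{gather*}
together with a Vitali/covering argument turning this into a finite subcover. More precisely, given $\varepsilon>0$, by the uniform convergence in (3) there is $\theta_0$ such that $|gK_\theta\vartriangle K_\theta|<\varepsilon|K_\theta|$ for all $g$ in a neighbourhood of each point of $K$ once $\theta\geqq\theta_0$; compactness of $K$ lets me cover $K$ by finitely many such neighbourhoods $g_1U,\dots,g_nU$. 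Using $g g' K_\theta\vartriangle K_\theta\subseteq g(g'K_\theta\vartriangle K_\theta)\cup(gK_\theta\vartriangle K_\theta)$ and left-invariance of $|\cdot|$, one bounds $|KK_\theta\vartriangle K_\theta|$ by a fixed finite multiple (depending only on $n$) of $\sup_{g\in K}|gK_\theta\vartriangle K_\theta|$, hence by a constant times $\varepsilon|K_\theta|$ for $\theta\geqq\theta_0$. Since $\varepsilon$ and $K$ were arbitrary, $\{K_\theta;\theta\in\Theta\}$ is a $\mathscr{K}$-F{\o}lner net, and a fortiori a F{\o}lner net; this proves the first assertion.

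For the $\sigma$-compact case I would argue identically but start from the second half of Lemma~\ref{lem0.1}: if $G$ is $\sigma$-compact and amenable, there is a summing \emph{sequence} $\{K_n;n\in\mathbb{N}\}$ in $G$, and the same covering argument (now with the index running over $\mathbb{N}$) shows it is a $\mathscr{K}$-F{\o}lner sequence, hence a F{\o}lner sequence. The main obstacle, such as it is, is purely the bookkeeping in the covering step: converting the ``uniform on compacta'' hypothesis on singletons $\{g\}$ into control of the thickened translate $KK_\theta$, and making sure the multiplicative constant coming from the finite subcover does not depend on $\theta$. Everything else is a direct citation of Lemma~\ref{lem0.1} and the definitions, so the proof is short; indeed one could alternatively just remark that a summing net trivially satisfies Definition~\ref{def0.2} once condition~(3) is read as a uniform statement, making the whole lemma essentially a restatement of \cite[Theorem~4.16]{Pat}.
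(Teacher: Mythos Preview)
Your overall strategy matches the paper's: the paper gives no proof for Lemma~\ref{lem0.3} beyond the sentence ``We can then obtain the following fact from Lemma~\ref{lem0.1}'', and you correctly invoke Lemma~\ref{lem0.1} to produce a summing net (or sequence). That a summing net is a F{\o}lner net in the sense of Definition~\ref{def0.2}(1) is indeed immediate from condition~(3) by taking the compacta there to be singletons.

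However, your covering argument for the $\mathscr{K}$-F{\o}lner conclusion has a genuine gap. From $K\subseteq\bigcup_{i=1}^n g_iU$ and the triangle inclusion $g_iuK_\theta\vartriangle K_\theta\subseteq g_i(uK_\theta\vartriangle K_\theta)\cup(g_iK_\theta\vartriangle K_\theta)$ one obtains
\begin{gather*}
|KK_\theta\vartriangle K_\theta|\ \le\ n\left|\bigcup_{u\in U}(uK_\theta\vartriangle K_\theta)\right|+\sum_{i=1}^n|g_iK_\theta\vartriangle K_\theta|.
\end{gather*}
The second sum is fine, but the first term already contains $|UK_\theta\setminus K_\theta|$, which is exactly the $\mathscr{K}$-F{\o}lner quantity for the set $U$. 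When $G$ is non-discrete, $U$ has uncountably many elements and $\big|\bigcup_{u\in U}(uK_\theta\vartriangle K_\theta)\big|$ is \emph{not} bounded by $\sup_{u\in U}|uK_\theta\vartriangle K_\theta|$; your recursion is circular and the claimed bound ``a fixed finite multiple (depending only on $n$) of $\sup_{g\in K}|gK_\theta\vartriangle K_\theta|$'' does not follow. Correspondingly, your closing remark that a summing net ``trivially satisfies Definition~\ref{def0.2}'' is correct only for part~(1), not for part~(2).

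This is not fatal to the plan --- the implication is standard and is part of the Paterson material to which the paper defers --- but as written the $\mathscr{K}$-F{\o}lner step is not justified. A cleaner route that sidesteps the issue: the paper's \emph{definition} of amenability in \S\ref{sec0.1} is already the F{\o}lner condition $|KF\vartriangle F|<\varepsilon|F|$, from which a $\mathscr{K}$-F{\o}lner net is obtained directly by indexing over pairs $(K,\varepsilon)$ with the obvious order and choosing for each pair a $(K,\varepsilon)$-invariant compactum.
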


Note that every subnet of a F{\o}lner net in $G$ is also a F{\o}lner net in $G$. A non-$\sigma$-compact amenable group does not have any F{\o}lner sequence in it. For example, $\mathbb{R}^d$, as a discrete abelian group, it is amenable but has no F{\o}lner sequence at all.

\subsection{Ergodic theorems for amenable groups}\label{sec0.2}
Let $(X,\mathscr{X})$ be a measurable space. By $L^\infty(X,\mathscr{X})$, it denotes the Banach space composed of all the bounded $\mathscr{X}$-measurable functions $\varphi\colon X\rightarrow\mathbb{R}$ with the uniform convergence topology induced by the sup-norm $\|\varphi\|_\infty=\sup_{x\in X}|\varphi(x)|$.

Let $G\curvearrowright_TX$ be a \textbf{\textit{Borel}} action of an amenable group $G$ on $X$, by $\mathscr{X}$-measurable maps of $X$ into itself, where $T\colon (g,x)\mapsto T_gx$ or $gx$ is the jointly measurable $G$-action map. In this case, $X$ is called a \textbf{\textit{Borel $G$-space}}. See $\S\ref{sec3.2}$

By $\mathcal{M}(G\curvearrowright_TX)$ we mean the set of all the $G$-invariant probability measures on $(X,\mathscr{X})$. It is well known that if $X$ is compact metrizable with $\mathscr{X}=\mathscr{B}_X$ and $T$ is continuous, then $\mathcal{M}(G\curvearrowright_TX)$ is non-void compact convex (cf.~\cite{Fom, Zim}).
In addition, independently of the Haar measure of $G$ we can define, for $1\le p<\infty$, two linear closed subspaces of $L^p(X,\mathscr{X},\mu)$:
\begin{subequations}
\begin{align}
\F_\mu^p&=\{\phi\in L^p(X,\mathscr{X},\mu)\,|\,T_g\phi=\phi\ \forall g\in G\}\\
\intertext{and}
\N_\mu^p&=\overline{\{\phi-T_g\phi\,|\,\phi\in L^p(X,\mathscr{X},\mu), g\in G\}},
\end{align}\end{subequations}
where, for any $g\in G$, the \textit{Koopman operator}
$$T_g\colon L^p(X,\mathscr{X},\mu)\rightarrow L^p(X,\mathscr{X},\mu)$$
associated to $G\curvearrowright_TX$ is defined by $T_g\phi\colon X\rightarrow X;\ x\mapsto\phi(T_gx)$ for $x\in X$.

We can then obtain the following $L^p$-mean ergodic theorem, which generalizes the $\sigma$-compact amenable group case.

\begin{Thm}\label{thm0.4}
Let $G\curvearrowright_TX$ be a Borel action of an amenable group $G$ on $X$ and let $\mu$ belong to $\mathcal{M}(G\curvearrowright_TX)$; then, for any $1\le p<\infty$,
\begin{subequations}
\begin{gather}
L^p(X,\mathscr{X},\mu)=\F_\mu^p\oplus\N_\mu^p.
\intertext{Moreover, for any F{\o}lner net $\{F_\theta; \theta\in\Theta\}$ in $G$ and any $\phi\in L^p(X,\mathscr{X},\mu)$,}
L^p(\mu)\textrm{-}\lim_{\theta\in\Theta}\frac{1}{|F_\theta|}\int_{F_\theta}T_g\phi dg=P(\phi).
\intertext{Here}
P\colon\F_\mu^p\oplus\N_\mu^p\rightarrow\F_\mu^p
\end{gather}
\end{subequations}
is the projection with $\|P\|\le1$. The statement also holds for $\sigma$-finite $\mu$ with $\mu(X)=\infty$ when $1<p<\infty$.
\end{Thm}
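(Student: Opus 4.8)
The plan is to establish the Hilbert-space case $p=2$ first, where everything is cleanest, and then bootstrap to general $1\le p<\infty$ by an approximation argument. For $p=2$, the Koopman operators $\{T_g\}_{g\in G}$ are unitary on $L^2(X,\mathscr{X},\mu)$ (since $\mu$ is $G$-invariant), so $\F_\mu^2$ is a closed subspace and $\N_\mu^2$ is, by definition, the closure of the span of coboundaries $\phi-T_g\phi$. The classical orthogonality identity $\langle\phi-T_g\phi,\psi\rangle=\langle\phi,\psi-T_{g^{-1}}\psi\rangle$ shows that $\psi\perp\N_\mu^2$ if and only if $T_g\psi=\psi$ for all $g$, i.e.\ $(\N_\mu^2)^\perp=\F_\mu^2$; hence $L^2=\F_\mu^2\oplus\N_\mu^2$ with $P$ the orthogonal projection, so $\|P\|\le1$ automatically. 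For the convergence of the averages $A_\theta\phi:=\frac{1}{|F_\theta|}\int_{F_\theta}T_g\phi\,dg$, I would split $\phi=P\phi+\psi$ with $\psi\in\N_\mu^2$: on the fixed part $A_\theta(P\phi)=P\phi$ exactly, while on a coboundary $\psi=\eta-T_h\eta$ one computes $A_\theta\psi=\frac{1}{|F_\theta|}\int_{F_\theta}(T_g\eta-T_{gh}\eta)\,dg$, whose $L^2$-norm is bounded by $\frac{\|\eta\|_2}{|F_\theta|}|F_\theta\vartriangle F_\theta h|$, which tends to $0$ along the F\o lner net; a density/$\varepsilon$-argument (using $\|A_\theta\|\le1$ uniformly) then handles all of $\N_\mu^2$. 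The Moore--Smith limit here is genuinely over the net $\Theta$, but the estimate is uniform enough that nothing beyond the defining F\o lner property is needed.

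Next I would pass to general $p$. Here the subtlety is that $L^p\cap L^\infty$ is dense in $L^p$ and also dense in $L^2$ (when $\mu$ is finite), so the projection $P$ defined on $L^2$ restricts to a bounded operator on $L^p\cap L^\infty$ that is contractive in every $L^q$-norm — this is a standard interpolation-type observation: $A_\theta$ is an average of measure-preserving composition operators, hence a contraction on every $L^q(\mu)$, $1\le q\le\infty$, so if $A_\theta\phi\to P\phi$ in $L^2$ for bounded $\phi$ and $\|A_\theta\phi\|_q\le\|\phi\|_q$, a routine three-$\varepsilon$ argument upgrades convergence to $L^p$ and shows $\|P\|_{L^p\to L^p}\le1$, so $P$ extends to all of $L^p$. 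The decomposition $L^p=\F_\mu^p\oplus\N_\mu^p$ then follows by identifying $\mathrm{range}(P)$ with $\F_\mu^p$ and $\ker(P)$ with $\N_\mu^p$: range is clearly inside $\F_\mu^p$ and contains it (a fixed function is its own average), and $\ker P\supseteq\N_\mu^p$ since coboundaries are killed in the limit, while $\ker P\subseteq\N_\mu^p$ because any $\phi$ with $P\phi=0$ is the $L^p$-limit of its averages which lie in $\N_\mu^p$ (each $A_\theta\phi-\phi$ is an average of coboundaries, so $A_\theta\phi\in\phi+\N_\mu^p$, forcing $\phi\in\N_\mu^p$ when $A_\theta\phi\to0$).

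For the $\sigma$-finite case with $\mu(X)=\infty$ and $1<p<\infty$, the obstruction is that $L^2\not\supseteq L^p$ and constants are not in $L^p$, so one cannot route everything through $L^2$. Instead I would invoke reflexivity of $L^p$ for $1<p<\infty$ together with the mean ergodic theorem in the Alaoglu--Birkhoff / Yosida form: the family $\{A_\theta\}$ consists of contractions on the reflexive space $L^p(\mu)$, the closed convex hull of the orbit $\{T_g\phi\}$ meets $\F_\mu^p$, and for an amenable group the F\o lner averages converge to the projection onto the fixed subspace. Concretely, decompose via the contractive projection onto $\F_\mu^p$ (which exists by the Alaoglu--Birkhoff theorem for semigroups of contractions on reflexive spaces, applied to the convex semigroup generated by the $T_g$), show $(\F_\mu^p)$ and $\N_\mu^p$ are complementary using a Hahn--Banach separation argument against the dual $L^{p'}$, and then run the same coboundary estimate as before, which never used finiteness of $\mu$.

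The main obstacle I anticipate is the convergence over a \emph{net} rather than a sequence combined with the $\sigma$-finite infinite-measure case: one must be careful that the coboundary estimate $\|A_\theta(\eta-T_h\eta)\|_p\le |F_\theta|^{-1}|F_\theta h\vartriangle F_\theta|\cdot\|\eta\|_p$ (which follows from $|F_\theta h\vartriangle F_\theta|=|h^{-1}F_\theta\vartriangle F_\theta|$-type manipulations and Minkowski's inequality for integrals) really does go to zero along the directed set, and that the density argument uses only uniform boundedness of $\{A_\theta\}$ — both of which hold, but the infinite-measure $p=1$ case genuinely fails (which is why the theorem excludes it), so the reflexivity hypothesis $p>1$ must be used essentially, not cosmetically. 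Everything else is bookkeeping: the F\o lner property in Definition~\ref{def0.2}(1) is exactly what makes the coboundary averages vanish, and no uniform ($\mathscr{K}$-F\o lner) property is required for the mean theorem.
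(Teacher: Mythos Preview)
Your overall strategy is sound and the argument can be made to work, but it is organized differently from the paper's and there is one computational slip worth flagging.

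\textbf{On the operator-composition index.} With the paper's convention $T_g\phi(x)=\phi(T_gx)$ one has $T_gT_h=T_{hg}$, not $T_{gh}$. So for a coboundary $\psi=\eta-T_h\eta$ the average is
\[
A_\theta\psi=\frac{1}{|F_\theta|}\int_{F_\theta}\bigl(T_g\eta-T_{hg}\eta\bigr)\,dg
=\frac{1}{|F_\theta|}\left(\int_{F_\theta}-\int_{hF_\theta}\right)T_g\eta\,dg,
\]
giving the bound $\|A_\theta\psi\|_p\le\|\eta\|_p\,|hF_\theta\vartriangle F_\theta|/|F_\theta|$, which is exactly the \emph{left} F{\o}lner quantity of Definition~\ref{def0.2}(1). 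Your written version produces the right translate $F_\theta h$, and the ``$|F_\theta h\vartriangle F_\theta|=|h^{-1}F_\theta\vartriangle F_\theta|$'' manipulation you invoke in the last paragraph is not valid for non-unimodular $G$. With the index corrected the issue disappears and no appeal to right F{\o}lner or unimodularity is needed.

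\textbf{Comparison with the paper's route.} The paper does not start from $p=2$. It first proves an abstract mean ergodic theorem (Theorem~\ref{thm2.1}) valid for any family $\{T_g\}$ of uniformly bounded operators on a \emph{reflexive} Banach space $E$: the decomposition $E=\F\oplus\N$ is obtained by a Lorch-type duality lemma (Theorem~\ref{thm1.2}) showing $\mathbf{V}(\mathcal{T})^\perp=\mathbf{R}(\mathcal{T}^*)$ for $\mathcal{T}=\{I-T_g\}$, followed by a Hahn--Banach argument applied simultaneously to $\{T_g\}$ on $E$ and $\{T_{g^{-1}}^*\}$ on $E^*$. This single abstract theorem covers $L^p(\mu)$ for all $1<p<\infty$, finite or $\sigma$-finite, in one stroke; the case $p=1$ with $\mu(X)<\infty$ is then obtained in \S\ref{sec2.2} by the density argument $L^2\hookrightarrow L^1$, essentially your bootstrap run in the opposite direction. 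Your plan instead treats $p=2$ by Hilbert-space orthogonality, extends to finite-measure $L^p$ by interpolation against $L^\infty$, and only then invokes reflexivity (via Alaoglu--Birkhoff/Yosida) for the $\sigma$-finite $p>1$ case. Both are correct; the paper's route is more uniform (one reflexive theorem, one approximation), while yours is more elementary at the $p=2$ step but requires two separate extension arguments and leaves the reflexive step comparatively sketchy. Your identification $\ker P=\N_\mu^p$ via $\phi-A_\theta\phi\in\N_\mu^p$ is the same mechanism the paper uses implicitly, and works in every case.
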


\begin{remark-of-thm0.4}
\begin{enumerate}
\item[(1)]For any Bore action of \textit{$\sigma$-compact} amenable group $G$, the $L^p$-mean ergodic theorem, particularly $L^2$-case, over any summing sequence or $\mathscr{K}$-F{\o}lner sequence $\{F_n\}$ in $G$, has been showed since Bewley 1971~\cite{Bew} and Greenleaf 1973~\cite{Gre}. Also see \cite{EG, Fur, KT, Kre, Shu, Lin, EW} etc. However, Theorem~\ref{thm0.4} has no other restriction on the F{\o}lner sequence/net $\{F_\theta; \theta\in\Theta\}$ in $G$.
\item[(2)]One step in the recipe for proving $L^1$-pointwise ergodic theorem (cf.~\cite[$\S2.4$]{Nev}) is based on the following ``fact'':
\begin{itemize}
\item \textit{$\frac{1}{|F_\theta|}\int_{F_\theta}T_g\phi(x)dg\to\phi(x)$ a.e. $\forall \phi\in\F_\mu^p$}.
\end{itemize}
This is because if $G$ is $\sigma$-compact, then for $\phi\in\F_\mu^p$, one can find by Fubini's theorem some $\mu$-conull $X_0\in\mathscr{X}$ such that $T_g\phi(x)=\phi(x)$ $m_G$-a.e. $g\in G$ for any $x\in X_0$ and thus
$$\frac{1}{|F_\theta|}\int_{F_\theta}T_g\phi(x)dg=\phi(x)\quad \forall x\in X_0.$$
However, if $G$ is not $\sigma$-compact, then $m_G$ is not $\sigma$-finite and $\{F_\theta; \theta\in\Theta\}$ is not countable. Thus the above ``fact'' is never a true fact.
\end{enumerate}
\end{remark-of-thm0.4}

Since the action map $T(g,x)$ is $\mathscr{B}_G$-measurable with respect to $g$ for any fixed $x$ in $X$, then for any compacta $K$ of $G$ and any observation $\varphi\in L^\infty(X,\mathscr{X})$, by Fubini's theorem we can well define the \textbf{\textit{ergodic average of $\varphi$ over $K$}} as follows:
\begin{gather*}
A(K,\varphi)(x)=\frac{1}{|K|}\int_{K}\varphi(T_gx)dg\quad \forall x\in X.
\end{gather*}
Unlike the mean ergodic theorem, we can only obtain the $L^\infty$-pointwise convergence over F{\o}lner subnet in $G$ here.

In this paper, we will only pay our attention to the following left-Haar-measure version which is more convenient for us to make use later on.

\begin{Thm}\label{thm0.5}
Let $\{F_{\theta^\prime}^\prime;\theta^\prime\in\Theta^\prime\}$ be any F{\o}lner net in any amenable group $G$. Then we can select out a F{\o}lner subnet $\{F_\theta; \theta\in\Theta\}$ of $\{F_{\theta^\prime}^\prime;\theta^\prime\in\Theta^\prime\}$, which is \textbf{$L^\infty$-admissible} for $G$; namely, if
$G\curvearrowright_TX$ is a Borel action of $G$ on any $X$, then there is a continuous linear function
\begin{subequations}
\begin{gather}
A(\centerdot)\colon (L^\infty(X,\mathscr{X}),\|\cdot\|_\infty)\rightarrow(\mathbb{R}^X,\mathfrak{T}_{\mathrm{p}\textrm{-}\mathrm{c}});\quad \varphi\mapsto\varphi^*
\intertext{such that: $\forall \varphi\in L^\infty(X,\mathscr{X})$,}
\varphi^*=(T_g\varphi)^*\quad  \forall g\in G
\intertext{and}
\lim_{\theta\in\Theta}A(F_{\theta},\varphi)(x)=\varphi^*(x)\quad \forall x\in X.\label{eq0.3c}
\end{gather}
\end{subequations}
If $G$ is abelian, then $T_g\varphi^*=\varphi^*$ for any $g\in G$.
\end{Thm}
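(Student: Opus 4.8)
The plan is to factor every pointwise ergodic average --- over \emph{all} Borel $G$-spaces, \emph{all} observables, and \emph{all} base points at once --- through one universal Banach space, and then to extract a single convergent subnet by weak-$*$ compactness. Concretely, fix the given F{\o}lner net $\{F_{\theta^\prime}^\prime;\theta^\prime\in\Theta^\prime\}$ and, on the Banach space $L^\infty(G,\mathscr{B}_G)$ of bounded $\mathscr{B}_G$-measurable functions on $G$ with the sup-norm, define the linear functionals
\[
\Lambda_{\theta^\prime}(f)=\frac{1}{|F_{\theta^\prime}^\prime|}\int_{F_{\theta^\prime}^\prime}f(g)\,dg\qquad(f\in L^\infty(G,\mathscr{B}_G)).
\]
Since $|\Lambda_{\theta^\prime}(f)|\le\|f\|_\infty$ with equality at $f\equiv1$, each $\Lambda_{\theta^\prime}$ lies in the closed unit ball of the dual $L^\infty(G,\mathscr{B}_G)^{*}$; that ball is weak-$*$ compact by Banach--Alaoglu, so the net $\{\Lambda_{\theta^\prime};\theta^\prime\in\Theta^\prime\}$ has a weak-$*$ convergent subnet $\{\Lambda_\theta;\theta\in\Theta\}$, say with limit $\Lambda$. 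The subnet is given by a map $\psi\colon\Theta\to\Theta^\prime$, and setting $F_\theta:=F_{\psi(\theta)}^\prime$ produces a subnet $\{F_\theta;\theta\in\Theta\}$ of $\{F_{\theta^\prime}^\prime;\theta^\prime\in\Theta^\prime\}$; it is again a F{\o}lner net (a subnet of a F{\o}lner net always is), and $\Lambda_\theta(f)=\frac{1}{|F_\theta|}\int_{F_\theta}f\,dg\to\Lambda(f)$ for every $f\in L^\infty(G,\mathscr{B}_G)$.

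The next step is to record that $\Lambda$ is left-translation invariant. For $s\in G$ and $f\in L^\infty(G,\mathscr{B}_G)$, the change of variables $g\mapsto s^{-1}g$ together with left-invariance of $m_G$ gives $\int_{F_\theta}f(sg)\,dg=\int_{sF_\theta}f\,dg$, whence
\[
\bigl|\Lambda_\theta\bigl(g\mapsto f(sg)\bigr)-\Lambda_\theta(f)\bigr|\le\frac{1}{|F_\theta|}\int_{sF_\theta\vartriangle F_\theta}|f|\,dg\le\|f\|_\infty\,\frac{|sF_\theta\vartriangle F_\theta|}{|F_\theta|}\xrightarrow[\theta\in\Theta]{}0
\]
by the F{\o}lner condition; passing to the limit yields $\Lambda(g\mapsto f(sg))=\Lambda(f)$ for all $s\in G$.

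Finally, let $G\curvearrowright_TX$ be any Borel action, fix $x\in X$ and $\varphi\in L^\infty(X,\mathscr{X})$. The map $g\mapsto T_gx$ is $\mathscr{B}_G/\mathscr{X}$-measurable, being the composition of the measurable section $g\mapsto(g,x)$ with the jointly measurable action map $T$; hence $f_{\varphi,x}:=\bigl[g\mapsto\varphi(T_gx)\bigr]$ lies in $L^\infty(G,\mathscr{B}_G)$ with $\|f_{\varphi,x}\|_\infty\le\|\varphi\|_\infty$, and $A(F_\theta,\varphi)(x)=\Lambda_\theta(f_{\varphi,x})$. I would then simply set $\varphi^{*}(x):=\Lambda(f_{\varphi,x})$. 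Then \eqref{eq0.3c} is immediate from the first paragraph; $\varphi\mapsto\varphi^{*}$ is linear because $\varphi\mapsto f_{\varphi,x}$ is, and $\sup_{x}|\varphi^{*}(x)|\le\|\varphi\|_\infty$ shows each coordinate functional $\varphi\mapsto\varphi^{*}(x)$ is norm-continuous, so $A(\centerdot)$ is continuous into $(\mathbb{R}^X,\mathfrak{T}_{\mathrm{p}\textrm{-}\mathrm{c}})$. For $h\in G$ one computes $f_{T_h\varphi,x}(g)=\varphi(T_{hg}x)=f_{\varphi,x}(hg)$, so invariance of $\Lambda$ gives $(T_h\varphi)^{*}(x)=\Lambda\bigl(g\mapsto f_{\varphi,x}(hg)\bigr)=\Lambda(f_{\varphi,x})=\varphi^{*}(x)$. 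If $G$ is abelian, then $f_{\varphi,T_hx}(g)=\varphi(T_{gh}x)=\varphi(T_{hg}x)=f_{\varphi,x}(hg)$, hence $(T_h\varphi^{*})(x)=\varphi^{*}(T_hx)=\Lambda(f_{\varphi,T_hx})=\varphi^{*}(x)$, i.e.\ $T_h\varphi^{*}=\varphi^{*}$.

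The only point that genuinely requires the universal construction is that the one extracted subnet must serve every Borel $G$-space simultaneously; replacing the whole family of ergodic averages by the single weak-$*$ compact ball of $L^\infty(G,\mathscr{B}_G)^{*}$ and invoking Banach--Alaoglu once is exactly what makes this possible, after which the invariance and continuity claims are soft. The remaining things to be careful about are merely the measurability of $g\mapsto\varphi(T_gx)$ for fixed $x$ and the (standard) fact that a subnet of a F{\o}lner net --- possibly indexed by a far larger directed set than $\Theta^\prime$ --- is still a F{\o}lner net.
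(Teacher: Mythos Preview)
Your argument is correct and takes a genuinely different route from the paper's. The paper first proves, via the Arzel\`{a}--Ascoli theorem applied to the family $\{\mathcal{A}(F_{\theta^\prime}^\prime,\centerdot)\}\subset C(L^\infty(X,\mathscr{X}),\mathbb{R}^X)$, that for any \emph{fixed} Borel $G$-space one can extract an admissible subnet (Theorem~\ref{thm3.3}); it then upgrades this to a single subnet good for \emph{all} Borel $G$-spaces by a contradiction argument that forms the product $\X=\prod_\lambda X_\lambda$ over the class of all ``bad'' Borel $G$-spaces and applies Theorem~\ref{thm3.3} once more to $G\curvearrowright_{\T}\X$. Your approach bypasses both steps: by observing that every ergodic average $A(K,\varphi)(x)$ is the evaluation of the single functional $f\mapsto|K|^{-1}\int_Kf\,dg$ at $f_{\varphi,x}\in L^\infty(G,\mathscr{B}_G)$, you reduce the whole problem to one Banach--Alaoglu extraction in $L^\infty(G,\mathscr{B}_G)^*$, yielding in effect a left-invariant mean $\Lambda$ together with a subnet along which it is the weak-$*$ limit of the averaging functionals. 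This is shorter, and it sidesteps the set-theoretic awkwardness of the paper's product over a proper class of Borel $G$-spaces. What the paper's Arzel\`{a}--Ascoli route does buy is convergence of $\mathcal{A}(F_\theta,\centerdot)\to\mathcal{A}(\centerdot)$ in the compact-open topology on $C(L^\infty(X,\mathscr{X}),\mathbb{R}^X)$ rather than merely in the pointwise (product) topology; however, Theorem~\ref{thm0.5} as stated only requires continuity of $A(\centerdot)$ into $(\mathbb{R}^X,\mathfrak{T}_{\mathrm{p}\textrm{-}\mathrm{c}})$, and your bound $|\varphi^*(x)|\le\|\varphi\|_\infty$ already gives that.
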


\noindent
Here $\mathbb{R}^X$ is equipped with the product topology, i.e., the topology $\mathfrak{T}_{\textrm{p-c}}$ of the usual pointwise convergence.

\begin{remark-of-thm0.5}
\begin{enumerate}
\item[(1)]Here we cannot obtain that $\varphi^*(x)$ is $\mathscr{X}$-measurable, since $\varphi^*$ is only a Moore-Smith limit of a net of measurable functions $A(F_{\theta},\varphi)$; if $\lim_{\theta\in\Theta}$ is sequential, then the limit $\varphi^*$ is automatically $\mathscr{X}$-measurable for any $\varphi\in L^\infty(X,\mathscr{X})$ by the classical measure theory. Moreover, we cannot assert that $\varphi^*(x)$ is invariant too. However, this theorem is already useful for some questions; see, e.g., Theorem~\ref{thm0.6}, Theorem~\ref{thm0.8} and Proposition~\ref{pro0.10} below for ergodic theory.

\item[(2)]It should be noted here that it is illegal to arbitrarily take $\varphi\in L^\infty(X,\mathscr{X},\mu)$ in place of $\varphi\in L^\infty(X,\mathscr{X})$ in general, for $m_G$ does not need to be $\sigma$-finite and so we cannot employ Fubini's theorem on $G\times X$ here.

\item[(3)]The $L^p$-mean ergodic theorem will play no role for proving Theorem~\ref{thm0.5} because of (2) in Remarks of Theorem~\ref{thm0.4}.
\end{enumerate}
\end{remark-of-thm0.5}

The following is an easy consequence of Theorems~\ref{thm0.4} and \ref{thm0.5}, in which we will not impose any additional restriction, like the Tempelman condition and the Shulman condition (cf.~Remark (2) of Theorem~\ref{thm0.6} below), on the F{\o}lner sequences we will consider here.

\begin{Thm}\label{thm0.6}
Let $G$ be an amenable group; then we can select out a F{\o}lner subnet $\{F_\theta; \theta\in\Theta\}$ from any F{\o}lner net $\{F_{\theta^\prime}; \theta^\prime\in\Theta^\prime\}$ in $G$, which is \textbf{$L^\infty$-admissible} for $G$; namely, if $G\curvearrowright_TX$ is a Borel action of $G$ on any $X$,
then one can find a linear operator $\varphi\mapsto\varphi^*$ from $L^\infty(X,\mathscr{X})$ to $\mathbb{R}^X$ (to $L^\infty(X,\mathscr{X})$ if $G$ is $\sigma$-compact by taking $L^\infty$-admissible F{\o}lner sequence for $G$) such that for all $\varphi\in L^\infty(X,\mathscr{X})$,
\begin{itemize}
\item $\varphi^*=(T_g\varphi)^*\ \forall g\in G$,
\item $\lim_{n\to\infty}A(F_{n},\varphi)(x)=\varphi^*(x)\ \forall x\in X$;
\item moreover, if $\mathcal{M}(G\curvearrowright_TX)\not=\varnothing$, then for each $\mu\in\mathcal{M}(G\curvearrowright_TX)$
\begin{itemize}
\item $T_g\varphi^*(x)=\varphi^*(x)$, $\mu$-a.e., $\forall g\in G$,
\item $\varphi^*=P(\varphi)$ $\mu$-a.e., and
\item if $\mu$ is ergodic, then $\varphi^*(x)\equiv\int_X\varphi d\mu$ for $\mu$-a.e. $x$ in $X$.
\end{itemize}
\end{itemize}
Here $P(\centerdot)$ is the projection for $p=1$ as in Theorem~\ref{thm0.4}.
\end{Thm}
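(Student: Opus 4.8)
The plan is to read Theorem~\ref{thm0.6} as the conjunction of Theorems~\ref{thm0.4} and \ref{thm0.5}, so that the only genuinely new ingredient is a short passage from $L^1$-mean convergence to the pointwise level. I would first apply Theorem~\ref{thm0.5} to the given F{\o}lner net $\{F_{\theta'};\theta'\in\Theta'\}$ to extract the $L^\infty$-admissible F{\o}lner subnet $\{F_\theta;\theta\in\Theta\}$ together with the continuous linear map $\varphi\mapsto\varphi^*$; since a subnet of a F{\o}lner net is again a F{\o}lner net and $\frac{1}{|F_\theta|}\int_{F_\theta}T_g\varphi\,dg=A(F_\theta,\varphi)$ as functions on $X$, the first two bullets --- $\varphi^*=(T_g\varphi)^*$ for all $g\in G$ and $\lim_\theta A(F_\theta,\varphi)(x)=\varphi^*(x)$ for all $x\in X$ --- are exactly what Theorem~\ref{thm0.5} supplies, and linearity of $\varphi\mapsto\varphi^*$ is inherited. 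When $G$ is $\sigma$-compact I would instead start from a F{\o}lner \emph{sequence} $\{F_n\}$ (Lemma~\ref{lem0.3}) and use the $\sigma$-compact form of Theorem~\ref{thm0.5}; then $\varphi^*$ is a pointwise limit of the $\mathscr{X}$-measurable functions $A(F_n,\varphi)$, hence $\mathscr{X}$-measurable, and the uniform bound $|A(F_n,\varphi)(x)|\le\|\varphi\|_\infty$ forces $\varphi^*\in L^\infty(X,\mathscr{X})$ with $\|\varphi^*\|_\infty\le\|\varphi\|_\infty$.

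For the ``moreover'' part, fix $\mu\in\mathcal{M}(G\curvearrowright_TX)$ and invoke Theorem~\ref{thm0.4} with $p=1$: then $L^1(X,\mathscr{X},\mu)=\F_\mu^1\oplus\N_\mu^1$, the net $A(F_\theta,\varphi)$ ($\theta\in\Theta$) converges to $P(\varphi)$ in $L^1(\mu)$, and $\|P\|\le1$; also $\|P(\varphi)\|_\infty\le\|\varphi\|_\infty$ since $P(\varphi)$ lies in the $L^1$-closure of the closed $\|\varphi\|_\infty$-ball of $L^\infty$. The point is to identify this $L^1(\mu)$-limit with the everywhere-pointwise limit $\varphi^*$. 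For $\sigma$-compact $G$ this is textbook: extract from $A(F_n,\varphi)\to P(\varphi)$ in $L^1(\mu)$ a subsequence converging to $P(\varphi)$ $\mu$-a.e.; as the whole sequence tends pointwise to $\varphi^*$ the same subsequence tends to $\varphi^*$ $\mu$-a.e., whence $\varphi^*=P(\varphi)$ $\mu$-a.e. For a general net, since neither bounded convergence nor Fatou survives, I would exploit the algebraic identity $(\psi-T_g\psi)^*=\psi^*-(T_g\psi)^*\equiv0$ for all $\psi\in L^\infty(X,\mathscr{X})$, $g\in G$ (immediate from linearity and the first bullet), i.e.\ that $\varphi\mapsto\varphi^*$ annihilates $L^\infty$-coboundaries; writing $\varphi=P(\varphi)+(\varphi-P(\varphi))$ with $\varphi-P(\varphi)\in\N_\mu^1$ and approximating the $\N_\mu^1$-component by $L^\infty$-coboundaries, one carries the decomposition of Theorem~\ref{thm0.4} down to the value of $\varphi^*$ and again obtains $\varphi^*=P(\varphi)$ $\mu$-a.e. (Equivalently, one may note that $\varphi\mapsto\varphi^*(x)$ is, for each $x$, a $G$-invariant positive unital functional on $L^\infty(X,\mathscr{X})$, and combine these ``pointwise invariant means'' with the fact that $P$ is conditional expectation onto the $\mu$-invariant $\sigma$-field.)

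Granting $\varphi^*=P(\varphi)$ $\mu$-a.e., the last two sub-bullets are immediate. Since $P(\varphi)\in\F_\mu^1$, i.e.\ $T_gP(\varphi)=P(\varphi)$ $\mu$-a.e.\ for each $g$, and $\mu$ is $G$-invariant so that the conull set carrying $\varphi^*=P(\varphi)$ has $\mu$-conull preimage under $T_g$, we get $T_g\varphi^*(x)=\varphi^*(T_gx)=P(\varphi)(T_gx)=P(\varphi)(x)=\varphi^*(x)$ for $\mu$-a.e.\ $x$. If moreover $\mu$ is ergodic, $\F_\mu^1$ consists of the $\mu$-a.e.\ constants, so $P(\varphi)$ equals a scalar $c$; computing $\int_XA(F_\theta,\varphi)\,d\mu=\int_X\varphi\,d\mu$ by Fubini on $F_\theta\times X$ and $G$-invariance of $\mu$, then letting $\theta\in\Theta$, yields $c=\int_XP(\varphi)\,d\mu=\int_X\varphi\,d\mu$; hence $\varphi^*(x)\equiv\int_X\varphi\,d\mu$ for $\mu$-a.e.\ $x$.

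I expect the sole real obstacle to be the reconciliation step in the ``moreover'' clause for a genuine (non-sequential) net: a Moore--Smith limit of uniformly bounded measurable functions need not be controlled by the $L^1$-limit of the same net (bounded convergence and Fatou both fail for nets), so $\varphi^*=P(\varphi)$ $\mu$-a.e.\ cannot be obtained by merely comparing the two modes of convergence, and one must lean on the coboundary-annihilation property of $\varphi\mapsto\varphi^*$ --- the content of $\varphi^*=(T_g\varphi)^*$ --- to transfer the $L^1$-level splitting of Theorem~\ref{thm0.4} to the pointwise level; the delicate point is that $\varphi\mapsto\varphi^*$ is continuous only for $\|\cdot\|_\infty$, so the approximation of the $\N_\mu^1$-summand must be arranged carefully. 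Everything else is bookkeeping on top of Theorems~\ref{thm0.4} and \ref{thm0.5}.
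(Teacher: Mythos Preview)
Your proposal is correct and tracks the paper's approach closely: invoke Theorem~\ref{thm0.5} for the map $\varphi\mapsto\varphi^*$ and the first two bullets, then Theorem~\ref{thm0.4} with $p=1$ for the $L^1(\mu)$-convergence $A(F_\theta,\varphi)\to P(\varphi)$, and identify $\varphi^*$ with $P(\varphi)$ $\mu$-a.e.\ by passing to an a.e.-convergent subsequence; the remaining sub-bullets then drop out exactly as you describe.

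The one point where you diverge is the general (non-$\sigma$-compact) net case. You propose a coboundary-annihilation argument and flag it as delicate; the paper instead applies the \emph{same} subsequence trick uniformly, with no case distinction. Since $L^1(\mu)$ is a Banach (hence metric) space, the paper simply extracts from the $L^1$-convergent net a \emph{sequence} $\{F_{\theta_n}\}$ with $A(F_{\theta_n},\varphi)\to P(\varphi)$ in $L^1(\mu)$, then quotes Lemma~\ref{lem3.4} to conclude $\varphi^*=P(\varphi)$ $\mu$-a.e., and stops. Your worry that this identification is not automatic---the extracted sequence $\theta_n$ need not be cofinal in $\Theta$, so one cannot directly read off $A(F_{\theta_n},\varphi)(x)\to\varphi^*(x)$ from the Moore--Smith convergence alone---is a genuine subtlety, but the paper does not pause over it; your coboundary route is an attempt to circumvent an issue the paper's proof simply does not raise.
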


\begin{remark-of-thm0.6}
\begin{enumerate}
\item[(1)] The pointwise convergence at \textit{everywhere} in Theorem~\ref{thm0.6} cannot be extended to $L^1(X,\mathscr{X},\mu)$ in stead of $L^\infty(X,\mathscr{X})$. Let us consider the action of $\mathbb{Z}$ on the circle $\mathbb{T}=\mathbb{R}/\mathbb{Z}$ defined by
    $$T_n\colon x\mapsto x+n\alpha\quad (\textrm{mod }1)$$
    where $\alpha$ is any fixed irrational number, and let $\varphi(x)=x^{-1/2}\in L^1(\mathbb{T},\mathscr{B}_\mathbb{T},\mu)$ where $\mu=dx$. Then there is a summing sequence $\{S_n;n\in\mathbb{N}\}$ in $\mathbb{Z}$ so that
    \begin{gather*}
    \limsup_{n\to\infty}\frac{1}{|S_n|}\sum_{i\in S_n}\varphi(T_ix)=+\infty\quad \forall x\in\mathbb{T};
    \end{gather*}
    see \cite[Theorem~1]{Eme}. Now for any point $x_0\in\mathbb{T}$ we can find a F{\o}lner sequence $\{F_n^\prime; n\in\mathbb{N}\}$ in $\mathbb{Z}$ such that
   \begin{gather*}
    \lim_{n\to\infty}\frac{1}{|F_n^\prime|}\sum_{i\in F_n^\prime}\varphi(T_ix_0)=+\infty;
    \end{gather*}
    and then for any F{\o}lner subsequence $\{F_{n}\}$ of $\{F_n^\prime\}$, there is no a desired limit $\varphi^*$ as in Theorem~\ref{thm0.6}. In fact, Theorem~\ref{thm0.6} does not hold for $L^p(X,\mathscr{X},\mu)$ instead of $L^\infty(X,\mathscr{X})$, $p<\infty$, by \cite[Proposition~5]{Eme}.

\item[(2)] The question of \textit{pointwise} convergence of ergodic averages is much more delicate than \textit{mean} convergence. When $G$ is a \textit{second countable} amenable group, in 2001~\cite{Lin} Lindenstrauss proved the most general $L^1$-pointwise ergodic theorem that holds for $G$ acting on Lebesgue spaces, over any $\mathscr{K}$-F{\o}lner sequence $\{F_n; n\in\mathbb{N}\}$ satisfying the so-called \textbf{Shulman/tempered Condition}~\cite{Shu, Lin}:
    $\left|{\bigcup}_{k<n}F_k^{-1}F_n\right|\le C|F_n|$
    for some $C>0$ and all $n\ge1$. This condition is weaker than the so-called \textbf{Tempelman Condition} for summing sequence:
    $\left|F_n^{-1}F_n\right|\le C|F_n|$
    for some $C>0$ and all $n\ge1$, needed in \cite{Bew, Tem, Eme, OW} and so on.
    \begin{itemize}
    \item The first point of our Theorem~\ref{thm0.6} is over F{\o}lner sequence neither $\mathscr{K}$-F{\o}lner nor summing sequences. Although one can select out a tempered $\mathscr{K}$-F{\o}lner subsequence from any $\mathscr{K}$-F{\o}lner sequence (cf.~\cite[Proposition~1.4]{Lin}), yet we cannot select out a tempered $\mathscr{K}$-F{\o}lner subsequence from any F{\o}lner sequence that is not of $\mathscr{K}$-F{\o}lner.
    \item The other point is that our amenable group $G$ we consider here is not necessarily second countable. The second countability is an important condition for Lindenstrauss's basic covering result \cite[Lemma~2.1]{Lin} which is used in his proof of the $L^1$-pointwise ergodic theorem.
    \end{itemize}
    So our Theorem~\ref{thm0.6} is not a consequence of Lindenstrauss's $L^1$-pointwise ergodic theorem nor of his proving. In other words, there are other non-tempered F{\o}lner sequences which are admissible for $L^\infty$-pointwise convergence.

\item[(3)] A.~del~Junco and J.~Rosenblatt showed in \cite{JR} that on every nontrivial Lebesgue space $(X,\mathscr{B},\mu)$ one can find an ergodic $\mathbb{Z}$-action and some $\varphi\in L^\infty(X,\mathscr{B},\mu)$ such that $A(F_n^\prime,\varphi)(x)$ does not have a limit almost everywhere over the F{\o}lner sequence $\{F_n^\prime; n\in\mathbb{N}\}$ in $\mathbb{Z}$ defined by
    $F_n^\prime=\left\{n^2,n^2+1,\dotsc,n^2+n\right\}$ that satisfies Tempelman's condition but is not increasing.
    However, by our Theorem~\ref{thm0.6}, it follows that we can always find some F{\o}lner subsequence $\{F_{n}\}$ of $\{F_n^\prime\}$ such that
    \begin{gather*}
    A(F_n,\varphi)(x)\to \varphi^*(x)\quad \textit{a.e.}~(\mu).
    \end{gather*}
    This shows that the pointwise convergence is very sensitive to the choice of the F{\o}lner sequence in $G$.
\item[(4)] The continuity of the Moore-Smith limit $A(\centerdot)\colon L^\infty(X,\mathscr{X})\rightarrow\mathbb{R}^X;\ \varphi\mapsto\varphi^*$ in Theorem~\ref{thm0.5} is a new ingredient for non-ergodic systems; and it is another important point that $\varphi^*(x)$ is defined at everywhere $x$ in $X$ (not only a.e.) for any bounded observation $\varphi$ (This point will be needed in Lemma~\ref{lem0.7} below); moreover, $\varphi^*$ does not depend on any invariant measure $\mu$. These points are new observations in ergodic theory, since here $G\curvearrowright_TX$ is not assumed to be unique ergodic and even not topological.
\end{enumerate}
\end{remark-of-thm0.6}

We will provide a little later some applications of Theorem~\ref{thm0.5} and its proof idea in, respectively, $\S\ref{sec0.3.1}$ for probability theory, $\S\S\ref{sec0.3.2}, \ref{sec0.3.3}\textrm{ and }\ref{sec0.3.4}$ for ergodic theory, and $\S\ref{sec0.3.5}$ for the $L^\infty$-pointwise convergence of multiple ergodic averages of amenable module actions. Some further applications such as for product of amenable groups and quasi-weakly almost periodic points will be presented later in $\S\ref{sec3.5}$ and $\S\ref{sec4}$.

\subparagraph*{Outline of the proof of Theorems~\ref{thm0.5} and \ref{thm0.6}}
Since K.~Yosida and S.~Kakutani 1939~\cite{YK} the maximal ergodic theorem (or maximal inequality) has played an essential role in the proofs of the $L^p$-pointwise ergodic theorems (cf.~e.g., \cite{Tem, Shu, Lin, Nev, BN} etc.). In our situation, however, relative to a left F{\o}lner net $\{F_{\theta};\theta\in\Theta\}$ in $G$, yet there is no such a maximal inequality and even though $M^*\varphi(x):=\sup_{\theta\in\Theta}|A(F_\theta,\varphi)(x)|$, for $\varphi\in L^\infty(X,\mathscr{X})$, is not $\mathscr{X}$-measurable (cf.~\cite[$\S2.3.1$]{Nev}). Thus we here cannot expect to use the standard recipe for proving $L^p$-pointwise ergodic theorems for $1\le p<\infty$ (cf.~\cite[$\S2.4$]{Nev}).

In $\S\ref{sec3.2}$ not involving any ergodic theory, we will first prove the everywhere $L^\infty$-pointwise convergence, by only using the classical Arzel\'{a}-Ascoli theorem,
\begin{gather*}
A(F_{\theta},\varphi)(x)\to \varphi^*(x)\ \forall x\in X,\quad\forall \varphi\in L^\infty(X,\mathscr{X}),
\end{gather*}
for some F{\o}lner subnet $\{F_{\theta};\theta\in\Theta\}$ that relies on $G\curvearrowright_TX$ but not on the observation $\varphi(x)$.
To obtain a $L^\infty$-admissible F{\o}lner subnet in $G$ that is independent of an explicit Borel $G$-space $X$, we need to consider a ``big'' Borel $G$-space $\X=\prod_\lambda X_\lambda$ which is just the product space of all Borel $G$-spaces $X_\lambda$. See $\S\ref{sec3.3}$.

But, since $T_hT_g\varphi=T_{gh}\varphi\not=T_{hg}\varphi$ and $m_G$ is only a left Haar measure of $G$, we cannot deduce the $G$-invariance, and specially the measurability, of the Moore-Smith limit $\varphi^*$. To obtain the invariance and measurability, we will need to utilize in $\S\ref{sec3.3}$ the mean ergodic theorem (i.e. Theorem~\ref{thm0.4}) which will be proved in $\S\ref{sec2}$ using classical functional analysis (cf.~$\S\ref{sec1}$). In other words, the $L^p$-mean ergodic theorem will play a role in proving Theorem~\ref{thm0.6}.

Note that the mean ergodic theorem early played a role for a.e. $L^1$-pointwise convergence in \cite{Yos40, Yos80, Eme} in a different way based on Banach's convergence theorem. However, the latter is invalid in our situation because of having no the ``$L^\infty$-mean ergodic theorem'' condition
\begin{gather*}
\lim_{n\to\infty}\|A(F_n,\varphi)-\varphi^*\|_{\infty,\mu}=0
\end{gather*}
as the condition (4) of \cite[Theorem~XIII.2.1]{Yos80} and of $L^\infty(X,\mathscr{X})$ is never a subset of second category of $(L^1(X,\mathscr{X},\mu),\|\cdot\|_1)$. This means that even for Theorem~\ref{thm0.6}, the standard recipe for proving $L^p$-pointwise ergodic theorems for $1\le p<\infty$ is not valid here.
\subsection{Applications}
We will present four applications of Theorem~\ref{thm0.5} and its proof methods here. Our $L^\infty$-pointwise convergence at \textit{everywhere} is an important point for our arguments.

\subsubsection{Universal conditional expectations}\label{sec0.3.1}
We now present our first application of Theorems~\ref{thm0.5} and \ref{thm0.6} in probability theory and ergodic theory. As in Theorem~\ref{thm0.6}, let $G\curvearrowright_TX$ be a Borel action of an amenable group $G$ on a measurable space $(X,\mathscr{X})$.
Set
\begin{gather*}
\mathscr{X}_G=\left\{B\in\mathscr{X}\,|\,T_g^{-1}[B]=B\ \forall g\in G\right\}\intertext{and}
\mathscr{X}_{G,\mu}=\left\{B\in\mathscr{X}\,|\,\mu\left(T_g^{-1}[B]\vartriangle B\right)=0\ \forall g\in G\right\}\quad \forall \mu\in\mathcal{M}(G\curvearrowright_TX)
\end{gather*}
which both form $\sigma$-subfields of $\mathscr{X}$. Then $P(\phi)=E_\mu(\phi|\mathscr{X}_{G,\mu})$ for $\phi\in L^1(X,\mathscr{X},\mu)$ by Theorem~\ref{thm0.4}; moreover, function $\phi$ is $\mathscr{X}_G$-measurable if and only if $T_g\phi=\phi\; \forall g\in G$.

Now by Theorems~\ref{thm0.5} and~\ref{thm0.6}, we can obtain the following, which says that $\mathscr{X}_G$ is sufficient to the family $\mathcal{M}(G\curvearrowright_TX)$.

\begin{Lem}\label{lem0.7}
Let $G$ be $\sigma$-compact amenable and $X$ a Borel $G$-space with $\mathcal{M}(G\curvearrowright_TX)\not=\varnothing$. Then there exists a bounded linear operator
$\mathcal{E}\colon L^\infty(X,\mathscr{X})\rightarrow L^\infty(X,\mathscr{X})$ such that
\begin{enumerate}
\item[$(1)$] $\|\mathcal{E}(\centerdot)\|\le1$,
\item[$(2)$] $\mathcal{E}(\phi)=\phi$ if $\phi\in L^\infty(X,\mathscr{X}_G)$, and
\item[$(3)$] for any $\mu\in\mathcal{M}(G\curvearrowright_TX)$, $\mathcal{E}(\phi)$ is a version of $E_\mu(\phi|\mathscr{X}_{G,\mu})$ (i.e. $\mathcal{E}(\phi)=E_\mu(\phi|\mathscr{X}_{G,\mu})$ $\mu$-a.e.) for any $\phi\in L^\infty(X,\mathscr{X})$.
\end{enumerate}
\end{Lem}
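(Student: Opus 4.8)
The plan is to build $\mathcal{E}$ directly from the operator $A(\centerdot)\colon\varphi\mapsto\varphi^*$ of Theorem~\ref{thm0.5}, applied to an $L^\infty$-admissible F{\o}lner \emph{sequence} $\{F_n;n\in\mathbb{N}\}$ in $G$ (available because $G$ is $\sigma$-compact amenable, by Lemma~\ref{lem0.3}). First I would record the key gain in the $\sigma$-compact case already noted in Theorem~\ref{thm0.6}: for $\varphi\in L^\infty(X,\mathscr{X})$ the limit $\varphi^*(x)=\lim_{n\to\infty}A(F_n,\varphi)(x)$ is a pointwise limit of a \emph{sequence} of $\mathscr{X}$-measurable functions, hence $\varphi^*\in L^\infty(X,\mathscr{X})$ with $\|\varphi^*\|_\infty\le\|\varphi\|_\infty$ (each $A(F_n,\varphi)$ has sup-norm $\le\|\varphi\|_\infty$). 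So setting $\mathcal{E}(\varphi):=\varphi^*$ gives a linear operator on $L^\infty(X,\mathscr{X})$ with $\|\mathcal{E}\|\le1$, which is $(1)$.

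For $(2)$: if $\phi\in L^\infty(X,\mathscr{X}_G)$ then $T_g\phi=\phi$ for every $g\in G$ (this equivalence is stated just before the lemma), so $A(F_n,\phi)(x)=\frac{1}{|F_n|}\int_{F_n}\phi(T_gx)\,dg=\frac{1}{|F_n|}\int_{F_n}\phi(x)\,dg=\phi(x)$ for all $x$ and all $n$; letting $n\to\infty$ gives $\mathcal{E}(\phi)=\phi$. For $(3)$: fix $\mu\in\mathcal{M}(G\curvearrowright_TX)$ and $\phi\in L^\infty(X,\mathscr{X})\subseteq L^1(X,\mathscr{X},\mu)$. By Theorem~\ref{thm0.4} (with $p=1$) we have the $L^1(\mu)$-convergence $A(F_n,\phi)=\frac{1}{|F_n|}\int_{F_n}T_g\phi\,dg\to P(\phi)$ in $L^1(\mu)$, and $P(\phi)=E_\mu(\phi\,|\,\mathscr{X}_{G,\mu})$ as recorded before the lemma. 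On the other hand $A(F_n,\phi)\to\mathcal{E}(\phi)=\phi^*$ pointwise everywhere, and the sequence is uniformly bounded by $\|\phi\|_\infty$, so by dominated convergence it also converges to $\phi^*$ in $L^1(\mu)$. Two $L^1(\mu)$-limits of the same sequence agree $\mu$-a.e., whence $\mathcal{E}(\phi)=P(\phi)=E_\mu(\phi\,|\,\mathscr{X}_{G,\mu})$ $\mu$-a.e., which is $(3)$.

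The remaining point is to check that $\mathcal{E}$ as defined is genuinely independent of $X$-internal choices in a way compatible with the statement — but it is not claimed to be measure-independent beyond what $(3)$ says, and the whole construction uses a single fixed $L^\infty$-admissible F{\o}lner sequence $\{F_n\}$ for $G$ (Theorem~\ref{thm0.5}/\ref{thm0.6} furnish one such sequence once and for all), so linearity and the norm bound are immediate from the corresponding properties of each averaging operator $A(F_n,\centerdot)$ together with pointwise passage to the limit. The main obstacle is essentially bookkeeping: making sure that the pointwise-everywhere limit in Theorem~\ref{thm0.5} really does upgrade to an honest element of $L^\infty(X,\mathscr{X})$ in the $\sigma$-compact case, which hinges on the limit being \emph{sequential} (so that measurability is preserved, as flagged in Remark~(1) of Theorem~\ref{thm0.5}) — and then invoking the mean ergodic theorem to identify the a.e.\ limit with the conditional expectation via the uniqueness of $L^1(\mu)$-limits. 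No new covering or maximal-inequality input is needed; everything reduces to Theorem~\ref{thm0.4}, Theorem~\ref{thm0.6}, and dominated convergence.
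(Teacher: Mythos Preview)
Your proof is correct and follows essentially the same approach as the paper: define $\mathcal{E}(\varphi)=\varphi^*$ via an $L^\infty$-admissible F{\o}lner \emph{sequence} (so that measurability of the limit is automatic), whence (1) and (2) are immediate. For (3) the paper verifies the defining identity $\int_B\mathcal{E}(\phi)\,d\mu=\int_B\phi\,d\mu$ for $B\in\mathscr{X}_{G,\mu}$ directly via Fubini and the $G$-invariance of $\mu$ and $B$, then cites Theorem~\ref{thm0.6} for the $\mathscr{X}_{G,\mu}$-measurability of $\phi^*$; your route---identifying the pointwise limit $\phi^*$ with the $L^1(\mu)$-limit $P(\phi)$ via dominated convergence and Theorem~\ref{thm0.4}---is an equally valid (indeed slightly slicker) way to reach the same conclusion, and is in fact exactly how Theorem~\ref{thm0.6} itself establishes $\varphi^*=P(\varphi)$ $\mu$-a.e.
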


\begin{proof}
Define $\mathcal{E}\colon \varphi\mapsto\varphi^*$ for each $\varphi\in L^\infty(X,\mathscr{X})$ where $\varphi^*$ is given as in Theorem~\ref{thm0.6} based on some $L^\infty$-admissible F{\o}lner sequence, say $\{F_n; n=1,2,\dotsc\}$ for $G$. Then (1) and (2) of Lemma~\ref{lem0.7} both hold automatically.

To check (3), let $\mu\in\mathcal{M}(G\curvearrowright_TX)$ and $B\in\mathscr{X}_{G,\mu}$ be arbitrary. Then by Theorem~\ref{thm0.6} and Fubini's theorem follows that for $\phi\in L^\infty(X,\mathscr{X})$,
\begin{equation*}\begin{split}
\int_B\mathcal{E}(\phi)d\mu
&=\lim_{n\to\infty}\frac{1}{|F_n|}\int_{F_n}\int_X1_B(x)T_g\phi(x)d\mu(x)dg\\
&=\lim_{n\to\infty}\frac{1}{|F_n|}\int_{F_n}\int_X1_B(T_gx)\phi(T_gx)d\mu(x)dg\\
&=\lim_{n\to\infty}\frac{1}{|F_n|}\int_{F_n}\int_B\phi d\mu dg\\
&=\int_B\phi d\mu
\end{split}\end{equation*}
which proves (3) since $\phi^*=P(\phi)$ \textit{a.e.} is $\mathscr{X}_{G,\mu}$-measurable by Theorem~\ref{thm0.6}.
\end{proof}

In 1963 V.\;S.~Varadarajan showed a universal conditional expectation theorem for any locally compact \textit{second countable} Hausdorff (abbreviated to lcscH) group $G$ by using harmonic analysis combining with the martingale theory~\cite[Theorem~4.1]{Var}. Here our ergodic-theoretic proof is very concise by using sufficiently the \textit{$\sigma$-compact amenability} without assuming the second countability axiom.

On the other hand, since the $L^1$-pointwise ergodic theorem of Lindenstrauss is for lcscH group and is for \textit{a.e.} convergence, not for \textit{everywhere} convergence as in our Theorem~\ref{thm0.6}, hence the classical $L^p$-pointwise ergodic theorems in the literature cannot play a role in the proof of Lemma~\ref{lem0.7} in place of Theorem~\ref{thm0.6}.

\subsubsection{Universal ergodic disintegration of invariant measures}\label{sec0.3.2}
We now turn to the important decomposition of an invariant probability measure into ergodic components. Let's consider a Borel $G$-space $X$, where $G$ is an lcH group and $X$ a compact Hausdorff space with $\mathscr{X}=\mathscr{B}_X$ the Borel $\sigma$-field. As usual, $\mu\in\mathcal{M}(G\curvearrowright X)$ is said to be \textbf{\textit{ergodic}} if and only if the \textit{0-1 law} holds:
\begin{itemize}
\item $\mu(B)=0$ or $1$ $\forall B\in\mathscr{X}_{G,\mu}$.\footnote{It should be noted here that sometimes $\mu$ is called \textit{ergodic} if weakly $\mu(B)=0$ or $1$ $\forall B\in\mathscr{X}_{G}$. If $G$ is lcscH, then the two cases are equivalent to each other; see, e.g., \cite{Var, Far, Phe}. However, in general, the latter is weaker than the former for $\mathscr{X}_G\subsetneq\mathscr{X}_{G,\mu}$ in general; see \cite{Far} and \cite[\S12]{Phe} for a counterexample.}
\end{itemize}

The classical Ergodic Decomposition Theorem of Farrell-Varadarajan \cite{Far, Var} claims that \textit{if $X$ is ($G$-isomorphically) compact metrizable and $G$ is lcscH, then  every $\mu\in\mathcal{M}(G\curvearrowright X)$ has a.s. uniquely an ergodic disintegration $\beta\colon x\mapsto\mu_x$}. See also \cite[Theorem~8.20]{EW}.

However, if $G$ is not second countable, then the methods developed in \cite{Far, Var, EW} are all invalid. Moreover, if $(X,\mathscr{X},\mu)$ is not (isomorphically) a compact metrizable probability space, then there exists no a disintegration of $\mu$ in general; see, e.g., \cite{Die} for counterexamples. Our Theorem~\ref{thm0.8} below shows that a universal ergodic disintegration (good for all $\mu\in\mathcal{M}(G\curvearrowright X)$) is always existent if $X$ is $G$-isomorphically a compact metric space.

Let $\mathscr{M}(X)$ consist of all the Borel probability measures on $X$ endowed with the customary topology as follows:
\begin{itemize}
\item the function $\mu\mapsto\mu(f)=\int_Xfd\mu$ is continuous, for any fixed $f\in C(X)$.
\end{itemize}
Now based on Theorem~\ref{thm0.6} and Lemma~\ref{lem0.7}, we can then concisely obtain a universal ergodic disintegration in a very general situation.

\begin{Thm}\label{thm0.8}
Let $G\curvearrowright_TX$ be a Borel action of a $\sigma$-compact amenable group $G$ by continuous transformations of a compact metric space $X$ to itself such that $\mathcal{M}(G\curvearrowright_TX)\not=\varnothing$. Then there exists an $\mathscr{X}$-measurable mapping
\begin{gather*}
\beta\colon X\rightarrow\mathcal{M}(G\curvearrowright_TX);\ x\mapsto\beta_x
\end{gather*}
such that for any $\mu\in\mathcal{M}(G\curvearrowright_TX)$,
\begin{enumerate}
\item[$(1)$] for any $\phi\in L^1(X,\mathscr{X},\mu)$, it holds that
\begin{enumerate}
\item $\phi\in L^1(X,\mathscr{X},\beta_x)$ and $\int_X\phi d\beta_x=E_\mu\left(\phi|\mathscr{X}_{G,\mu}\right)(x)$ for $\mu$-a.e. $x\in X$;
\item $\mu=\int_X\beta_xd\mu(x)$, i.e. $\int_X\varphi d\mu=\int_X\int_X\varphi d\beta_xd\mu(x)\ \forall \varphi\in L^1(X,\mathscr{X},\mu)$;
\end{enumerate}
\item[$(2)$] moreover, $\beta_x$ satisfies the 0-1 law (i.e. $\beta_x$ is ergodic to $G\curvearrowright_TX$) for $\mu$-a.e. $x\in X$.
\end{enumerate}
Namely $\{\beta_x; x\in X\}$, characterized by $(1)$, is a ``universal ergodic disintegration'' of $G\curvearrowright_TX$.
\end{Thm}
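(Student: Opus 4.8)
Throughout, set $\mathscr{X}=\mathscr{B}_X$. The plan is to build $\beta$ once, directly from the operator $\varphi\mapsto\varphi^{*}$ supplied by Theorem~\ref{thm0.6} and Lemma~\ref{lem0.7}, and then verify $(1)$ and $(2)$ for each $\mu$ separately (the same $\beta$ serving all $\mu$). Since $G$ is $\sigma$-compact it has a F{\o}lner sequence (Lemma~\ref{lem0.3}); fix once an $L^\infty$-admissible F{\o}lner sequence $\{F_{n}\}$ for $G$ and let $\varphi\mapsto\varphi^{*}$ be the associated map of Theorem~\ref{thm0.6}, so that each $\varphi^{*}\in L^\infty(X,\mathscr{X})$ is everywhere defined by $\varphi^{*}(x)=\lim_{n}A(F_{n},\varphi)(x)$, with $\|\varphi^{*}\|_\infty\le\|\varphi\|_\infty$, with $(T_{g}\varphi)^{*}=\varphi^{*}$ for all $g\in G$ (an identity in $\mathbb{R}^{X}$), and with $\varphi^{*}=P(\varphi)=E_\mu(\varphi\,|\,\mathscr{X}_{G,\mu})$ $\mu$-a.e.\ for every $\mu\in\mathcal{M}(G\curvearrowright_TX)$ (by Theorem~\ref{thm0.6}, Lemma~\ref{lem0.7} and $\S\ref{sec0.3.1}$). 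For each $x\in X$ the functional $f\mapsto f^{*}(x)$ on $C(X)$ is linear, positive (because $A(F_{n},f)\ge0$ whenever $f\ge0$) and sends the constant $1$ to $1$, so by Riesz--Markov there is a unique $\beta_{x}\in\mathscr{M}(X)$ with $\int_Xf\,d\beta_{x}=f^{*}(x)$ for all $f\in C(X)$. Since $T_{g}f\in C(X)$ and $(T_{g}f)^{*}=f^{*}$ everywhere, $\beta_{x}$ is $G$-invariant, i.e.\ $\beta_{x}\in\mathcal{M}(G\curvearrowright_TX)$. Fixing a countable uniformly dense $\{f_{i}\}\subset C(X)$ (possible as $X$ is compact metric), the Borel structure of the compact metrizable space $\mathcal{M}(G\curvearrowright_TX)$ is generated by $\nu\mapsto\int f_{i}\,d\nu$, and $x\mapsto\int f_{i}\,d\beta_{x}=f_{i}^{*}(x)$ is $\mathscr{X}$-measurable; hence $\beta\colon x\mapsto\beta_{x}$ is $\mathscr{X}$-measurable.

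I would next prove $(1)$. Fix $\mu\in\mathcal{M}(G\curvearrowright_TX)$. For $f\in C(X)$ we have $\int_Xf\,d\beta_{x}=f^{*}(x)=E_\mu(f\,|\,\mathscr{X}_{G,\mu})(x)$ for $\mu$-a.e.\ $x$, and integrating gives $\int_X\big(\int_Xf\,d\beta_{x}\big)\,d\mu(x)=\int_XE_\mu(f\,|\,\mathscr{X}_{G,\mu})\,d\mu=\int_Xf\,d\mu$. A functional monotone-class argument (based at $C(X)$ and closed under uniformly bounded monotone pointwise limits) upgrades both ``$x\mapsto\int_Xh\,d\beta_{x}$ is $\mathscr{X}$-measurable'' and ``$\int_X\big(\int_Xh\,d\beta_{x}\big)\,d\mu(x)=\int_Xh\,d\mu$'' to all bounded $\mathscr{X}$-measurable $h$; truncation plus monotone convergence then extends the identity to $\varphi\in L^1(X,\mathscr{X},\mu)$, which is precisely $\mu=\int_X\beta_{x}\,d\mu(x)$, and applying it to $|\varphi|$ shows $\varphi\in L^1(X,\mathscr{X},\beta_{x})$ for $\mu$-a.e.\ $x$. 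For the pointwise identity in $(1)(a)$, approximate $\varphi$ in $L^1(\mu)$ by $f_{k}\in C(X)$ along a subsequence with $\sum_{k}\|f_{k+1}-f_{k}\|_{L^1(\mu)}<\infty$; then $\sum_{k}\int_X|f_{k+1}-f_{k}|\,d\beta_{x}<\infty$ for $\mu$-a.e.\ $x$, $E_\mu(f_{k}\,|\,\mathscr{X}_{G,\mu})\to E_\mu(\varphi\,|\,\mathscr{X}_{G,\mu})$ both in $L^1(\mu)$ and $\mu$-a.e., and passing to the limit in $\int_Xf_{k}\,d\beta_{x}=E_\mu(f_{k}\,|\,\mathscr{X}_{G,\mu})(x)$ (valid $\mu$-a.e.\ for each fixed $k$) yields $\int_X\varphi\,d\beta_{x}=E_\mu(\varphi\,|\,\mathscr{X}_{G,\mu})(x)$ for $\mu$-a.e.\ $x$; the right-hand side is $\mathscr{X}_{G,\mu}$-measurable because $\varphi^{*}=P(\varphi)$ is.

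The delicate part is $(2)$. First I would show $\int_Xf_{i}^{*}\,d\beta_{x}=f_{i}^{*}(x)$ for \emph{every} $x$: as $G$ is $\sigma$-compact ($m_{G}$ is $\sigma$-finite) and $F_{n}$ is compact, Fubini gives $\int_XA(F_{n},f_{i})\,d\beta_{x}=\frac{1}{|F_{n}|}\int_{F_{n}}\big(\int_Xf_{i}(T_{g}y)\,d\beta_{x}(y)\big)\,dg$, and $G$-invariance of $\beta_{x}$ makes the inner integral equal $\int_Xf_{i}\,d\beta_{x}=f_{i}^{*}(x)$ for each $g$, so $\int_XA(F_{n},f_{i})\,d\beta_{x}=f_{i}^{*}(x)$ for all $n$; letting $n\to\infty$ with $|A(F_{n},f_{i})|\le\|f_{i}\|_\infty$ and dominated convergence gives the claim. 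Applying the disintegration identity $\mu=\int_X\beta_{x}\,d\mu(x)$ (extended in $(1)$ to bounded measurable functions) to $h:=(f_{i}^{*})^{2}$ together with this claim,
\begin{equation*}
\int_X\Big(\int_X(f_{i}^{*})^{2}\,d\beta_{x}-\big(f_{i}^{*}(x)\big)^{2}\Big)\,d\mu(x)=\int_X(f_{i}^{*})^{2}\,d\mu-\int_X(f_{i}^{*})^{2}\,d\mu=0;
\end{equation*}
the integrand is the nonnegative variance of $f_{i}^{*}$ under $\beta_{x}$, hence vanishes for $\mu$-a.e.\ $x$. Intersecting over $i$ there is a $\mu$-conull set on which $f_{i}^{*}$ is $\beta_{x}$-a.e.\ constant for every $i$; by uniform density of $\{f_{i}\}$ and $\|\varphi^{*}\|_\infty\le\|\varphi\|_\infty$, $f^{*}$ is then $\beta_{x}$-a.e.\ constant for every $f\in C(X)$. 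Since $\beta_{x}\in\mathcal{M}(G\curvearrowright_TX)$, Theorem~\ref{thm0.6} with $\mu:=\beta_{x}$ gives $f^{*}=E_{\beta_{x}}(f\,|\,\mathscr{X}_{G,\beta_{x}})$ $\beta_{x}$-a.e.; by density of $C(X)$ in $L^1(\beta_{x})$ and $L^1$-contractivity of conditional expectation, $E_{\beta_{x}}(\centerdot\,|\,\mathscr{X}_{G,\beta_{x}})$ sends every element of $L^1(\beta_{x})$ to a $\beta_{x}$-a.e.\ constant, and taking $f=1_B$ with $B\in\mathscr{X}_{G,\beta_{x}}$ forces $\beta_{x}(B)\in\{0,1\}$. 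Thus $\beta_{x}$ is ergodic for $\mu$-a.e.\ $x$, completing the verification of $(1)$ and $(2)$.

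I expect the main obstacle to be $(2)$, and inside it the identity $\int_Xf_{i}^{*}\,d\beta_{x}=f_{i}^{*}(x)$: this is the one step where one must combine the \emph{everywhere} assertions of Theorem~\ref{thm0.6} (pointwise convergence and $(T_{g}\varphi)^{*}=\varphi^{*}$) with a Fubini interchange that is legitimate precisely because $G$ is $\sigma$-compact; after that, ergodicity of $\beta_{x}$ reduces to the vanishing of a variance. The only genuine bookkeeping is to organize the various ``$\mu$-a.e.'' clauses so that a single $\mu$-null exceptional set works simultaneously for the countably many $f_{i}$, and to exploit that $\varphi\mapsto\varphi^{*}$ is measure-independent, so that $\varphi^{*}$ is at once a representative of $E_\mu(\varphi\,|\,\mathscr{X}_{G,\mu})$ for \emph{every} $\mu\in\mathcal{M}(G\curvearrowright_TX)$ — which is exactly what makes $\beta$ ``universal''.
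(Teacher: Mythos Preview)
Your construction of $\beta$ and your verification of property~$(1)$ are essentially the same as the paper's: Riesz representation applied to $f\mapsto f^{*}(x)$ on $C(X)$, $G$-invariance of $\beta_x$ from $(T_gf)^{*}=f^{*}$, measurability from $f_i^{*}\in L^\infty(X,\mathscr{X})$, and the extension from $C(X)$ to $L^1(\mu)$ via an absolutely summable approximation. The paper writes the approximation as $\phi=\sum_n\phi_n$ with $\phi_n\in C(X)$ and $\sum_n\|\phi_n\|_1<\infty$, which is equivalent to your telescoping subsequence.

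For property~$(2)$ you take a genuinely different route. The paper works with a countable generating \emph{algebra} $\mathcal{B}\subset\mathscr{X}$: it first shows from $(1)$(a) that any $\mathscr{X}_{G,\mu}$-measurable bounded $\varphi$ satisfies $\varphi\equiv\int\varphi\,d\beta_x$ $\beta_x$-a.e.\ for $\mu$-a.e.\ $x$, applies this to $\varphi=\mathcal{E}(1_B)$ for $B\in\mathcal{B}$ (using Lemma~\ref{lem0.7} to identify $\mathcal{E}(1_B)$ simultaneously as $E_\mu(1_B\,|\,\mathscr{X}_{G,\mu})$ and $E_{\beta_x}(1_B\,|\,\mathscr{X}_{G,\beta_x})$), and then approximates an arbitrary $I\in\mathscr{X}_{G,\beta_x}$ by sets in $\mathcal{B}$. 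You instead use a countable \emph{uniformly dense} family $\{f_i\}\subset C(X)$: the identity $\int f_i^{*}\,d\beta_x=f_i^{*}(x)$ (for every $x$) combined with the disintegration applied to $(f_i^{*})^2$ yields a vanishing-variance argument showing $f_i^{*}$ is $\beta_x$-a.e.\ constant for $\mu$-a.e.\ $x$, and you then push this through $C(X)$ and $L^1(\beta_x)$ by continuity of $\varphi\mapsto\varphi^{*}$ and $L^1$-contractivity of conditional expectation. Both arguments are correct; yours is arguably more self-contained (the variance computation replaces the explicit two-level use of Lemma~\ref{lem0.7}), while the paper's approach makes the role of the universal expectation $\mathcal{E}$ more transparent and handles the passage to general $\mathscr{X}_{G,\beta_x}$-sets via set approximation rather than functional approximation.
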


\begin{proof}
For the convenience of our later arguments in $\S\ref{sec0.3.4}$ we will illustrate our proof by dividing into three steps.

\textit{Step 1. A universal construction.}
Let $\varphi\mapsto\varphi^*$ from $L^{\infty}(X,\mathscr{X})$ to $\mathbb{R}^X$ be as in Theorem~\ref{thm0.6} associated to some $L^\infty$-admissible F{\o}lner sequence $\{F_n\}_1^\infty$ for $G$. Then for any $x\in X$, we can define a positive linear functional $L_x\colon C(X)\rightarrow\mathbb{R}$ by $\varphi\mapsto \varphi^*(x)$
such that $L_x(\mathbf{1})=1$. By the Riesz representation theorem, it follows that for any $x\in X$, there exists a (unique) probability measure, write $\beta_x$, on $\mathscr{X}$ $(=\mathscr{B}_X)$ such that
$$L_x(\varphi)=\int_X\varphi d\beta_x\quad \forall \varphi\in C(X),$$
and $\beta_x$ is independent of any $\mu$ in $\mathcal{M}(G\curvearrowright_TX)$.
Given any $\varphi\in C(X)$, clearly $x\mapsto\beta_x(\varphi)=\varphi^*(x)$ is $\mathscr{X}$-measurable by Theorem~\ref{thm0.6}. Hence $X\ni x\mapsto\beta_x\in\mathscr{M}(X)$ is $\mathscr{X}$-measurable.

In order to check the $G$-invariance of $\beta_x$, for any $h\in G$ and $\varphi\in C(X)$, we can see $T_h\varphi\in C(X)$ and that
\begin{align*}
\int_X\varphi d\beta_x&=\lim_{n\to\infty}\frac{1}{|F_n|}\int_{hF_n}T_g\varphi(x)dg
=\lim_{n\to\infty}\frac{1}{|F_n|}\int_{F_n}T_{hg}\varphi(x)dg\\
&=\lim_{n\to\infty}\frac{1}{|F_n|}\int_{F_n}T_{g}T_h\varphi(x)dg\\
&=\int_XT_h\varphi d\beta_x
\end{align*}
which implies that $T_h\beta_x=\beta_x$. Thus, $\beta_x\in\mathcal{M}(G\curvearrowright_TX)$. (This also proves the classical result of M.~Day: $\mathcal{M}(G\curvearrowright_TX)\not=\varnothing$.)

\textit{Step 2. Checking property (1).}
First the property (b) is valid whenever (a) is true.
Noting the property (a) holds, for any $\phi\in C(X)$, by Lemma~\ref{lem0.7}. The general $L^1$-case follows from the standard approximation theorem and the Lebesgue dominated convergence theorem of conditional expectation (cf., e.g.,~\cite[pp.~109]{Fur}). Indeed, for $\phi\in L^1(X,\mathscr{X},\mu)$ note that $\phi=\sum_{n}\phi_n$, $\mu$-\textit{a.e.}, $\phi_n\in C(X)$ where $\sum_n\|\phi_n\|_1<\infty$.

Set $A=\{x\colon \phi(x)\not=\sum_n\phi_n(x)\}$. Then $\mu(A)=0$ and let $A_n$ be open neighborhoods of $A$ with $A_n\supset A_{n+1}$ and $\mu(A_n)\to0$. Let $\chi_n\in C(X)$ such that $\chi_n=0$ outside $A_n$ and $0<\chi_n\le 1$ on $A_n$. For each $\varepsilon>0$, the power $\chi_n^\varepsilon$ is continuous and
$$
\int_X\left(\int\chi_n^\varepsilon d\beta_x\right)d\mu(x)=\int_X\chi_n^\varepsilon d\mu\le\mu(A_n).
$$
Letting first $\varepsilon\to0$ and then $n\to\infty$, we find
\begin{gather*}
0=\lim_{n\to\infty}\lim_{\varepsilon\to0}\int_X\left(\int\chi_n^\varepsilon d\beta_x\right)d\mu(x)=\lim_{n\to\infty}\int_X\beta_x(A_n)d\mu(x)=\int_X\beta_x({\cap}_nA_n)d\mu(x).
\end{gather*}
Hence $\beta_x(A)=0$ for $\mu$-\textit{a.e.} $x\in X$. This implies that $\phi=\sum_n\phi_n$, $\beta_x$-\textit{a.e.}, for $\mu$-\textit{a.e.} $x\in X$.

Next, $\sum_n\|E(|\phi_n||\mathscr{X}_{G,\mu})\|_1<\infty$, so that $X_0:=\left\{x\colon \sum_nE(|\phi_n||\mathscr{X}_{G,\mu})(x)\ \left(=\sum_n\beta_x(|\phi_n|)\right)<\infty\right\}$ is of $\mu$-measure one. Now for $x\in X_0$, $\sum_n|\phi_n|\in L^1(X,\mathscr{X},\beta_x)$, and so
$$
\int_X\left(\sum_n\phi_n\right)d\beta_x=\sum_n\int_X\phi_nd\beta_x.
$$
We also have $\mu$-\textit{a.e.} $x\in X$, $\int_X\phi_nd\beta_x=E(\phi_n|\mathscr{X}_{G,\mu})(x)$, so that
$$
\int_X\left(\sum_n\phi_n\right)d\beta_x=\sum_nE(\phi_n|\mathscr{X}_{G,\mu})(x)\quad (\mu\textit{-a.e. }x\in X).
$$
Since $\phi=\sum_n\phi_n$ ($\beta_x$-\textit{a.e.}), we find
$$
\int_X\phi d\beta_x=\sum_nE(\phi_n|\mathscr{X}_{G,\mu})(x)=E\left(\sum_n\phi_n|\mathscr{X}_{G,\mu}\right)(x)=E(\phi|\mathscr{X}_{G,\mu})(x)\quad (\mu\textit{-a.e. }x\in X)
$$
This proves the property (b).

\textit{Step 3. Checking property (2).}
Finally, we will proceed to prove the property (2). For this, let $\mu\in\mathcal{M}(G\curvearrowright_TX)$ be any given. By the property (1), it follows that
for any $B\in\mathscr{X}_{G,\mu}$,
\begin{gather*}
\beta_x(B)=E_\mu\left(1_B|\mathscr{X}_{G,\mu}\right)(x)=1_B(x)\quad (\mu\textit{-a.e. }x\in X).
\end{gather*}
That is to say, for $B\in\mathscr{X}_{G,\mu}$, we have $1_B\equiv\beta_x(B)$ $\beta_x$-\textit{a.e.}, for $\mu$-\textit{a.e.} $x\in X$.
Thus, if $\varphi\in L^\infty(X,\mathscr{X})$ is $\mathscr{X}_{G,\mu}$-measurable, then
\begin{gather*}
\varphi\equiv\int_X\varphi d\beta_x,\quad \beta_x\textit{-a.e.},\quad (\mu\textit{-a.e.}~x\in X).
\end{gather*}
Let $\mathcal{B}=\{B_1,B_2,\dotsc\}$ be a countable algebra of $\mathscr{X}$-subsets of $X$ with $\sigma(\mathcal{B})=\mathscr{X}$ and $\mathcal{E}$ be as in Lemma~\ref{lem0.7}. Then one can find some $\mathscr{E}_\mu\in\mathscr{X}$ with $\mu(\mathscr{E}_\mu)=1$ such that for all $x\in\mathscr{E}_\mu$ and $B\in\mathcal{B}$,
\begin{gather*}
\mathcal{E}(1_B)\stackrel{\beta_x\textit{-a.e.}}{\equiv}\int_X\mathcal{E}(1_B)d\beta_x=\int_XE_{\beta_x}\left(1_B\big{|}\mathscr{X}_{G,\beta_x}\right)d\beta_x=\beta_x(B).
\end{gather*}
Now for any $x\in\mathscr{E}_\mu$ and any $I\in\mathscr{X}_{G,\beta_x}$, choose a sequence $B_{n_k}\in\mathcal{B}$ with $1_{B_{n_k}}\to 1_I$ a.e.~($\beta_x$); and then
\begin{align*}
\beta_x(I)&=\lim_{k\to\infty}\beta_x(B_{n_k})\stackrel{\beta_x\textit{-a.e.}}{=}\lim_{k\to\infty}E_{\beta_x}\left(1_{B_{n_k}}\big{|}\mathscr{X}_{G,\beta_x}\right)\stackrel{\beta_x\textit{-a.e.}}{=}E_{\beta_x}\left(\lim_{k\to\infty}1_{B_{n_k}}\big{|}\mathscr{X}_{G,\beta_x}\right)\\
&=E_{\beta_x}\left(1_I\big{|}\mathscr{X}_{G,\beta_x}\right)\quad \beta_x\textit{-a.e.}\\
&=1_I\quad \beta_x\textit{-a.e.}
\end{align*}
by Lemma~\ref{lem0.7} and $\beta_x\in\mathcal{M}(G\curvearrowright_TX)$.
So $\beta_x(I)=0$ or $1$ and thus $\beta_x$ is ergodic for any $x\in\mathscr{E}_\mu$.

This therefore proves Theorem~\ref{thm0.8}.
\end{proof}

In Theorem~\ref{thm0.8} $G$ is not assumed commutative. The $\sigma$-compactness is needed in Theorem~\ref{thm0.8} to guarantee the measurability of $\beta_x$ and Lemma~\ref{lem0.7}. We now conclude $\S\ref{sec0.3.2}$ with some remarks on Theorem~\ref{thm0.8}.

\begin{remark-of-thm0.8}
\begin{enumerate}
\item[(1)] In Theorem~\ref{thm0.8}, our disintegration $\{\beta_x,x\in X\}$ is independent of $\mu$ and it is in fact \textit{a.s.} unique.

\item[(2)] Each invariant component $\beta_x$ is constructed over a same $L^\infty$-admissible F{\o}lner sequence $\{F_n\}_1^\infty$ for $G$. It should be noted that disintegration $\{\beta_x;x\in X\}$ is important in many aspects of ergodic theory; for example, for defining fiber product $\int_Y\mu_y\otimes\mu_yd\nu$ of a $G$-factor $\pi\colon(X,\mathscr{X},\mu)\rightarrow(Y,\mathscr{Y},\nu)$ with $\pi^{-1}[\mathscr{Y}]=\mathscr{X}_{G,\mu}$ in Furstenberg's theory~\cite{Fur}.
\item[(3)] If $G$ acts by continuous maps on a compact metric space $X$, then one can obtain an ergodic integral representation for any $\mu\in\mathcal{M}(G\curvearrowright X)$ by Choquet's theorem (cf.~\cite[$\S12$]{Phe}); but no the disintegration $\{\beta_x;x\in X\}$.
\item[(4)] Let $G$ be a $\sigma$-compact amenable group, which is \textit{not} second countable. Then the Farrell-Varadarajan theorem plays no role in this case. Moreover, since $G$ is not metrizable (otherwise it is second countable), hence \cite[Theorem~8.20]{EW} cannot play a role too.
\item[(5)] If $G$ is not amenable, then Theorem~\ref{thm0.8} does not need to be true. For example, for $G$ we take the discrete group of \textit{all} permutations of $\mathbb{N}$ and we naturally act it on the compact metric space $\{0,1\}^\mathbb{N}$ (cf.~\cite{Fom,Var}).
\end{enumerate}
\end{remark-of-thm0.8}
\subsubsection{A Khintchine-type recurrence theorem}\label{sec0.3.3}
Let $\mathbb{R}$ be equipped with the standard Euclidean topology. Then A.\,Y.~Khintchine's recurrence theorem says that (cf.~\cite{Khi, NS}):
\begin{itemize}
\item If $\mathbb{R}\curvearrowright_TX$ is a continuous action of $\mathbb{R}$ on a compact metric space $X$ preserving a Borel probability measure $\mu$, then for any $E\in\mathscr{B}_X$ with $\mu(E)>0$, the set
    \begin{gather*}
    \left\{t\in\mathbb{R}\,|\,\mu(E\cap T_{-t}E)>\mu(E)^2-\varepsilon\right\},\quad \forall \varepsilon>0,
    \end{gather*}
    is relatively dense in $\mathbb{R}$
\end{itemize}
V.~Bergelson, B.~Host and B.~Kra have recently derived in 2005 \cite{BHK} the following multiple version of Khintchine's theorem, only valid in \textit{ergodic} $\mathbb{Z}$-action $\mathbb{Z}\curvearrowright_TX$:
\begin{itemize}
\item $\{n\in\mathbb{Z}\,|\,\mu(E\cap T_{-n}E\cap T_{-2n}E)>\mu(E)^3-\varepsilon\}$ and \\
$\{n\in\mathbb{Z}\,|\,\mu(E\cap T_{-n}E\cap T_{-2n}E\cap T_{-3n}E)>\mu(E)^4-\varepsilon\}$,\\
each of the above two sets is relatively dense in $\mathbb{Z}$.
\end{itemize}
Using Theorems~\ref{thm0.4}, \ref{thm0.6} and \ref{thm0.8} and differently with \cite{Khi, NS}, we may now generalize Khintchine's theorem from $\mathbb{R}$-actions to amenable-group actions as follows:

\begin{Thm}\label{thm0.9}
Let $G\curvearrowright_TX$ be a $\mu$-preserving Borel action of a $\sigma$-compact amenable group $G$, by continuous transformations on a compact metric space $X$, not necessarily ergodic. Then for any $\varphi\in L^2(X,\mathscr{B}_X,\mu)$ with $\varphi\ge0$ a.e. and $\int_X\varphi d\mu>0$, the set
\begin{gather*}
\mathcal{H}(\varphi,\varepsilon)=\left\{g\in G\,\bigg{|}\,\int_X\varphi(x)\varphi(T_gx)d\mu(x)>\left(\int_X\varphi d\mu\right)^2-\varepsilon\right\},\quad \forall \varepsilon>0,
\end{gather*}
is of positive lower Banach density; i.e., $\mathcal{H}(\varphi,\varepsilon)$ has positive lower density over any F{\o}lner sequence $\{F_n\}_1^\infty$ in $G$. This implies that $\mathcal{H}(\varphi,\varepsilon)$ is syndetic in $G$.
\end{Thm}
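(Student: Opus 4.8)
The plan is to exploit the mean ergodic theorem (Theorem~\ref{thm0.4} with $p=2$) to show that the average of the correlation function $f(g):=\int_X\varphi(x)\varphi(T_gx)\,d\mu(x)$ over any F{\o}lner set is at least $c^2-o(1)$, where $c:=\int_X\varphi\,d\mu>0$; since $f$ is bounded, this forces its superlevel set $\mathcal{H}(\varphi,\varepsilon)$ to occupy a uniformly positive proportion of every F{\o}lner set, and a short translation trick then upgrades this to syndeticity.

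First I would settle the bookkeeping. Since $T$ is jointly measurable, $\varphi$ is $\mathscr{B}_X$-measurable, and $G$ is $\sigma$-compact --- so that $m_G$ is $\sigma$-finite and Fubini--Tonelli is available on $G\times X$ --- the map $g\mapsto f(g)$ is $\mathscr{B}_G$-measurable and, by Cauchy--Schwarz together with $\mu$-invariance and $\varphi\ge0$, satisfies $0\le f(g)\le\|\varphi\|_2^2=:M$ for every $g$. Thus $\mathcal{H}(\varphi,\varepsilon)=\{g:f(g)>c^2-\varepsilon\}$ is Borel; if $\varepsilon>c^2$ it equals $G$ and there is nothing to prove, so I assume $0<\varepsilon\le c^2$. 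Now fix an arbitrary F{\o}lner sequence $\{F_n\}_1^\infty$ in $G$. Fubini rewrites the average of $f$ over $F_n$ as $\langle\varphi,A(F_n,\varphi)\rangle_{L^2(\mu)}$, and Theorem~\ref{thm0.4} gives $A(F_n,\varphi)\to P(\varphi)$ in $L^2(\mu)$, whence this average tends to $\langle\varphi,P(\varphi)\rangle=\|P(\varphi)\|_2^2$ (orthogonality of $\F_\mu^2$ and $\N_\mu^2$ for $p=2$). As $P(\varphi)=E_\mu(\varphi\,|\,\mathscr{X}_{G,\mu})$ has the same integral $c$ as $\varphi$, Jensen's inequality yields $\|P(\varphi)\|_2^2\ge c^2$. (Alternatively one could disintegrate $\mu$ by Theorem~\ref{thm0.8} to reduce to the ergodic case, where $P(\varphi)$ is the constant equal to the component's mass, then integrate; I would use the direct argument.)

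Next comes the density estimate. Splitting $F_n$ across $\mathcal{H}:=\mathcal{H}(\varphi,\varepsilon)$ and using $f\le M$ on $F_n\cap\mathcal{H}$ and $0\le f\le c^2-\varepsilon$ on $F_n\setminus\mathcal{H}$,
\[
\frac{1}{|F_n|}\int_{F_n}f(g)\,dg\le M\,\frac{|F_n\cap\mathcal{H}|}{|F_n|}+(c^2-\varepsilon);
\]
rearranging and letting $n\to\infty$ gives $\liminf_n|F_n\cap\mathcal{H}|/|F_n|\ge(\|P(\varphi)\|_2^2-c^2+\varepsilon)/M\ge\varepsilon/\|\varphi\|_2^2>0$. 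Since this bound does not depend on the chosen F{\o}lner sequence, $\mathcal{H}(\varphi,\varepsilon)$ has lower density $\ge\varepsilon/\|\varphi\|_2^2$ over every F{\o}lner sequence in $G$.

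Finally, syndeticity, by contradiction. If $\mathcal{H}$ were not syndetic, then for each $n$ the compactum $F_n^{-1}$ would fail to translate onto $G$, which produces some $h_n\in G$ with $F_nh_n\cap\mathcal{H}=\varnothing$; and because the modular function cancels in the ratio $|gF_nh_n\vartriangle F_nh_n|/|F_nh_n|=|gF_n\vartriangle F_n|/|F_n|$, the right translates $\{F_nh_n\}$ again form a (left) F{\o}lner sequence in $G$ (one such sequence exists by Lemma~\ref{lem0.3}), along which the lower density of $\mathcal{H}$ is $0$ --- contradicting the uniform positive bound just obtained. Hence $\mathcal{H}(\varphi,\varepsilon)$ is syndetic. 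I expect the only delicate points to be the measurability/Fubini step, where $\sigma$-compactness of $G$ is genuinely used so that $m_G$ is $\sigma$-finite, and the verification that right translates of a left F{\o}lner sequence stay left F{\o}lner in the possibly non-unimodular $G$; the core --- mean ergodic theorem plus Jensen --- is soft.
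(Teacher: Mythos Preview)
Your argument is correct and in fact more direct than the paper's. The paper routes the proof through the universal ergodic disintegration (Theorem~\ref{thm0.8}): it passes to the ergodic components $\beta_{x_0}$, uses the mean ergodic theorem there to get $\lim_n |F_n|^{-1}\int_{F_n}\int_X\varphi\, T_g\varphi\, d\beta_{x_0}\,dg=(\int\varphi\,d\beta_{x_0})^2$, and then reassembles via Fatou, Fubini, and Jensen to obtain $\liminf_n\ge c^2$. You bypass the disintegration entirely: applying Theorem~\ref{thm0.4} directly in $L^2(\mu)$ and using that $\F_\mu^2\perp\N_\mu^2$ (which holds because the Koopman operators are unitary) gives the \emph{full} limit $\langle\varphi,P(\varphi)\rangle=\|P(\varphi)\|_2^2$, and Jensen on $P(\varphi)=E_\mu(\varphi\,|\,\mathscr{X}_{G,\mu})$ yields $\ge c^2$. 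The two computations of course agree, since $P(\varphi)(x_0)=\int\varphi\,d\beta_{x_0}$ $\mu$-a.e., but your route avoids invoking Theorems~\ref{thm0.6} and~\ref{thm0.8} altogether and yields the explicit lower density bound $\varepsilon/\|\varphi\|_2^2$. You also supply the syndeticity argument (via right-translating the F{\o}lner sequence, with the modular function cancelling), which the paper asserts but does not write out.
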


\begin{proof}
Let $\{F_n\}_1^\infty$ be any F{\o}lner sequence in $G$.
Let $\{\beta_x,x\in X\}$ be the universal ergodic disintegration of $G\curvearrowright_TX$ by Theorem~\ref{thm0.8}. Then for $\mu$\textit{-a.e. }$x_0\in X$, by using Theorem~\ref{thm0.6} for $\beta_{x_0}$ in place of $\mu$,
\begin{equation*}
\lim_{n\to\infty}\frac{1}{|F_n|}\int_{F_n}T_g\varphi(x)dg=\varphi^*(x)=\int_X\varphi d\beta_{x_0}\quad \left(\textrm{in }(L^2(X,\mathscr{X},\beta_{x_0}),\|\centerdot\|_2)\right).
\end{equation*}
Furthermore,
\begin{gather*}
\lim_{n\to\infty}\frac{1}{|F_n|}\int_{F_n}\left(\int_X\varphi(x)\varphi(T_gx)d\beta_{x_0}(x)\right)dg=\left(\int_X\varphi d\beta_{x_0}\right)^2.
\end{gather*}
Therefore by Theorem~\ref{thm0.8}, Fatou lemma, Fubini's theorem and Jensen's inequality, we can obtain that
\begin{equation*}\begin{split}
\liminf_{n\to\infty}\frac{1}{|F_n|}\int_{F_n}\left(\int_X\varphi(x)\varphi(T_gx)d\mu(x)\right)dg&\ge\int_X\left(\int_X\varphi d\beta_{x_0}\right)^2d\mu(x_0)\\
&\ge\left(\int_X\int_X\varphi d\beta_{x_0}d\mu(x_0)\right)^2\\
&=\left(\int_X\varphi d\mu\right)^2.
\end{split}\end{equation*}
This means that for any $\varepsilon>0$, the set $\mathcal{H}(\varphi,\varepsilon)$ has positive lower density over $\{F_n\}_1^\infty$ in $G$.

The proof of Theorem~\ref{thm0.9} is thus completed, for $\{F_n\}_1^\infty$ is arbitrary.
\end{proof}

A special important case is that $\varphi(x)=1_B(x)$ for $B\in\mathscr{B}_X$ with $\mu(B)>0$. Moreover, since here $G$ is not necessarily an abelian group and so there is no an applicable analogue
of Herglotz-Bochner spectral theorem (cf.~\cite[Theorem~36A]{Loo} for locally compact abelian group) for the correlation function $g\mapsto\langle \varphi, T_g\varphi\rangle$ from $G$ to $\mathbb{C}$, our proof of Theorem~\ref{thm0.9} is of somewhat interest itself.

We will consider the recurrence theorem in the case that $G$ is not $\sigma$-compact in $\S\ref{sec4.2}$.
\subsubsection{Existence of $\sigma$-finite invariant measures}\label{sec0.3.4}
Replacing $C(X)$ by $C_c(X)=\{\phi\colon X\rightarrow\mathbb{R}\,|\,\phi\textrm{ continuous with }\textrm{supp}(f)\textrm{ compact}\}$ in the above proof of Theorem~\ref{thm0.8}, we in fact have proved the following byproduct, which generalizes the classical theorems of Kryloff-Bogoliouboff and of Day.

\begin{prop}[$\sigma$-finite invariant measure]\label{pro0.10}
Let $G\curvearrowright_TX$ be a Borel action of an amenable group $G$ by continuous maps of a ``locally compact Hausdorff space'' $X$ to itself such that
\begin{itemize}
\item $\varphi^*(x)\not\equiv0$\quad for some $\varphi\in C_c(X)$ over some F{\o}lner net $\{F_\theta; \theta\in\Theta\}$ in $G$.
\end{itemize}
Then there exists a nontrivial Borel measure $\mu$ on $X$ with the following properties:
\begin{enumerate}
\item[$\mathrm{(i)}$] $\mu(K)<\infty$ for every compact set $K\subseteq X$;
\item[$\mathrm{(ii)}$] $\mu(E)=\inf\{\mu(V)\colon E\subseteq V, V\textrm{ open}\}$ for every $E\in\mathscr{B}_X$;
\item[$\mathrm{(iii)}$]$\mu(E)=\sup\{\mu(K)\colon K\subseteq E, K\textrm{ compact}\}$ for every open set $E$ and for each $E\in\mathscr{B}_X$ with $\mu(E)<\infty$; and
\item[$\mathrm{(iv)}$] for every $g\in G$, $\int_X\phi d\mu=\int_XT_g\phi d\mu$ for each $\phi\in C_c(X)$.
\end{enumerate}
In particular, if $X$ is a locally compact $\sigma$-compact metric space, then $\mu$ is a $\sigma$-finite invariant measure for $G\curvearrowright_TX$.
\end{prop}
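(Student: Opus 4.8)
\emph{Proof plan.} The strategy is to repeat Step~1 of the proof of Theorem~\ref{thm0.8} verbatim, with the single change that, because $X$ need not be compact and $\mathbf 1\notin C_c(X)$, one cannot normalize and must instead anchor the whole construction at a single point. Given the F{\o}lner net $\{F_\theta;\theta\in\Theta\}$ furnished by the hypothesis, I first apply Theorem~\ref{thm0.5} to extract from it an $L^\infty$-admissible F{\o}lner subnet, still denoted $\{F_\theta;\theta\in\Theta\}$. A subnet of a convergent net has the same limit, so the function $\varphi^*$ attached to the distinguished $\varphi\in C_c(X)$ is unchanged, hence still $\not\equiv0$. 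Fix $x_0\in X$ with $\varphi^*(x_0)\ne0$; since $|A(F_\theta,\varphi)(x_0)|\le A(F_\theta,|\varphi|)(x_0)$ and $|\varphi|\in C_c(X)$, replacing $\varphi$ by $|\varphi|$ we may assume $\varphi\ge0$ and $\varphi^*(x_0)>0$. Because $C_c(X)\subseteq L^\infty(X,\mathscr{B}_X)$, Theorem~\ref{thm0.5} defines
$$L\colon C_c(X)\rightarrow\mathbb{R},\qquad \psi\mapsto\psi^*(x_0)=\lim_{\theta\in\Theta}\frac{1}{|F_\theta|}\int_{F_\theta}\psi(T_gx_0)\,dg,$$
which is linear (linearity of $A(\centerdot)$), positive ($\psi\ge0$ forces $A(F_\theta,\psi)(x_0)\ge0$, hence $L(\psi)\ge0$), and nonzero since $L(\varphi)>0$.

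By the Riesz--Markov--Kakutani representation theorem for positive linear functionals on $C_c(X)$ of a locally compact Hausdorff space, there is a Radon measure $\mu$ on $\mathscr{B}_X$ with $L(\psi)=\int_X\psi\,d\mu$ for every $\psi\in C_c(X)$; its defining regularity properties are exactly (i), (ii) and (iii), and $\mu$ is nontrivial because $L\ne0$. For (iv), fix $h\in G$ and $\psi\in C_c(X)$. Since $G\curvearrowright_TX$ is a group action by continuous maps, each $T_h$ is a homeomorphism of $X$, so $\operatorname{supp}(T_h\psi)=T_h^{-1}(\operatorname{supp}\psi)$ is compact and $T_h\psi\in C_c(X)$. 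Then, using the $G$-invariance clause $\varphi^*=(T_g\varphi)^*$ of Theorem~\ref{thm0.5} (equivalently, the left-Haar substitution $g\mapsto hg$ together with the F{\o}lner property, exactly as in Step~1 of Theorem~\ref{thm0.8}),
$$\int_X T_h\psi\,d\mu=(T_h\psi)^*(x_0)=\psi^*(x_0)=\int_X\psi\,d\mu,$$
which is (iv).

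For the ``in particular'' clause, let $X$ be locally compact, $\sigma$-compact and metrizable, and write $X=\bigcup_nK_n$ with each $K_n$ compact. Then $\mu(K_n)<\infty$ by (i), so $\mu$ is $\sigma$-finite (and automatically Radon, every Borel measure finite on compacta on such a space being regular). Moreover, for $g\in G$ the pushforward $(T_g)_*\mu\colon E\mapsto\mu(T_g^{-1}E)$ is again Radon, since $T_g$ is a homeomorphism of a locally compact $\sigma$-compact metric space, and by (iv) it satisfies $\int_X\psi\,d((T_g)_*\mu)=\int_X T_g\psi\,d\mu=\int_X\psi\,d\mu$ for all $\psi\in C_c(X)$; the uniqueness part of the Riesz representation theorem then gives $(T_g)_*\mu=\mu$, i.e. $\mu$ is a $\sigma$-finite $G$-invariant measure for $G\curvearrowright_TX$.

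Essentially all of this is routine once Theorem~\ref{thm0.5} is in hand. The two places that need genuine care are the very first reduction --- passing to an $L^\infty$-admissible F{\o}lner subnet so that $\psi^*$ is defined for \emph{every} $\psi\in C_c(X)$ while preserving the non-vanishing $\varphi^*\not\equiv0$, which is precisely where ``a subnet of a convergent net keeps its limit'' is used --- and, in the $\sigma$-compact case, the upgrade from $C_c$-functional invariance to honest measure invariance, which rests on the uniqueness clause of the Riesz representation theorem and hence on the regularity of all the measures involved that $\sigma$-compactness guarantees.
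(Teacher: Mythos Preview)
Your argument is correct and, for the construction of $\mu$ via the Riesz representation theorem applied to the positive linear functional $\psi\mapsto\psi^*(x_0)$ on $C_c(X)$, it is exactly the paper's approach (Step~1 of Theorem~\ref{thm0.8} with $C_c(X)$ in place of $C(X)$). Your care in first passing to an $L^\infty$-admissible subnet while preserving $\varphi^*\not\equiv0$ is a detail the paper leaves implicit.

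The one genuine difference is in the ``in particular'' clause. The paper argues by hand: given $E$ with $\mu(E)>0$, it picks a compact $K\subseteq E$ with $\mu(K)>0$, approximates $1_K$ from above by $\phi_n\in C_c(X)$, and uses (iv) together with dominated convergence to get $\mu(T_g^{-1}E)\ge\mu(K)>0$; symmetrizing gives quasi-invariance, and the same inequality actually yields $\mu(T_g^{-1}E)\ge\mu(E)$, hence invariance. You instead observe that $(T_g)_*\mu$ is again a Radon measure (since $T_g$ is a homeomorphism of a locally compact $\sigma$-compact metric space) that agrees with $\mu$ on $C_c(X)$ by (iv), and invoke the uniqueness clause of the Riesz theorem. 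Your route is shorter and more conceptual; the paper's route is more self-contained in that it avoids appealing to the regularity/uniqueness machinery and makes the role of $\sigma$-compactness visible through inner regularity. Both are entirely valid.
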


\begin{proof}
As in Step 1 of the proof of Theorem~\ref{thm0.8}, it is easy to see that there exists a nontrivial Borel measure $\mu$ on $X$ with the properties (i)--(iv).

Now let $X$ be an lc, $\sigma$-compact metric space. We first need to prove that $\mu$ is quasi-invariant for $G\curvearrowright_TX$:
\begin{itemize}
\item $\forall E\in\mathscr{B}_X$ and $g\in G$, $\mu(E)>0\Leftrightarrow\mu\left(T_g^{-1}E\right)>0$.
\end{itemize}
If $\mu(E)>0$; then one can find some compact set $K\subseteq E, \mu(K)>0$ and $\phi_n\in C_c(X)$ with $\phi_n=1$ on $K$, $0\le\phi_n\le 1$ and $\phi_n(x)\searrow 1_K(x)$. By $\phi_n(T_gx)\to1_K(T_gx)$ and (iv), it follows that
\begin{gather}\label{eq0.4}
\mu\left(T_g^{-1}E\right)\ge\int_X1_K(T_gx)d\mu=\lim_{n\to\infty}\int_X\phi_n(T_gx)d\mu=\lim_{n\to\infty}\int_X\phi_nd\mu\ge\mu(K)>0.
\end{gather}
Conversely, if $\mu\left(T_g^{-1}E\right)>0$, then by the above argument with $E^\prime:=T_g^{-1}E$ in place of $E$ and $h:=g^{-1}$ in place of $g$, we can see that $0<\mu\left(T_{h}^{-1}E^\prime\right)=\mu(E)$. Thus $\mu$ is quasi-invariant for $G\curvearrowright_TX$.

In fact, (\ref{eq0.4}) also implies that $\mu\left(T_g^{-1}E\right)\ge\mu(E)$ for any $g\in G$. Thus, $\mu$ is invariant. This proves Proposition~\ref{pro0.10}.
\end{proof}

Clearly, Day's fixed-point theorem and Kryloff-Bogoliouboff's construction proofs, for any amenable group $G$ acting continuously on a compact metric space $X$, both does not work for proving the existence of a $\sigma$-finite $\infty$-invariant measure here; this is because the convex set of probability measures on $X$ is not compact for $X$ is not compact.

In fact, we may read Proposition~\ref{pro0.10} as follows:
\begin{itemize}
\item \textit{Let $G\curvearrowright_TX$ be a Borel action of an amenable group $G$, by continuous maps, on a locally compact $\sigma$-compact metric space $X$.
 If $G\curvearrowright_TX$ does not have any $\sigma$-finite invariant measure, then the Moore-Smith limit $\varphi^*\equiv0$ for any $\varphi\in C_c(X)$ over any $L^\infty$-admissible F{\o}lner net $\{F_\theta; \theta\in\Theta\}$ in $G$; i.e., $G\curvearrowright_TX$ is \textbf{dissipative}.}
\end{itemize}
Clearly, this proposition has some interests of its own.

\subsubsection{$L^\infty$-Pointwise multi-ergodic theorem for topological modules}\label{sec0.3.5}
In what follows, $\mathbb{G}$ is called a \textbf{\textit{topological $R$-module}} if and only if $\mathbb{G}$ is a module over a ring $(R,+,\cdot)$ such that
\begin{itemize}
\item $\mathbb{G}$ is a topological group, $(R,+,\cdot)$ is such that $(R,+)$ is an lcH abelian group with a fixed Haar measure $|\cdot|$ or $dt$, and the scalar multiplication $(t,g)\mapsto tg$ from $R\times \mathbb{G}$ to $\mathbb{G}$ is continuous.
\end{itemize}
We now consider a Borel action of a topological $R$-module $\mathbb{G}$ on a measurable space $(X,\mathscr{X})$, $T\colon \mathbb{G}\times X\rightarrow X$ or write $\mathbb{G}\curvearrowright_TX$. Given any $g\in\mathbb{G}$ and $t\in R$, we customarily write
\begin{gather*}
T_{g}^t\colon X\rightarrow X;\quad x\mapsto T_g^tx=T_{tg}x\ \forall x\in X.
\end{gather*}
Now for any $g_1,\dotsc,g_l\in \mathbb{G}$ and any compacta $K$ of $R$ with $|K|>0$, for any $\varphi\in L^\infty(X,\mathscr{X})$, we may define the \textbf{\textit{multiple ergodic average}} of $\varphi$ over $(g_1,\dotsc,g_l;K)$ as follows:
\begin{align*}
A_{g_1,\dotsc,g_l}(K,\varphi)(x)&=\frac{1}{|K|}\int_KT_{g_1}^t\varphi(x)\dotsm T_{g_l}^t\varphi(x)dt\quad \forall x\in X\\
&=\frac{1}{|K|}\int_K\varphi(T_{tg_1}x)\dotsm\varphi( T_{tg_l}x)dt.
\end{align*}
Our methods developed for Theorem~\ref{thm0.5} is also valid for the following multiple ergodic theorem.

\begin{Thm}\label{thm0.11}
Let $\mathbb{G}$ be a topological $R$-module; then we can select out a subnet $\{K_{\theta};\theta\in\Theta\}$ from any net $\{K_{\theta^\prime}^\prime;\theta^\prime\in\Theta^\prime\}$ of positive Haar-measure compacta of $(R,+)$ such that, if $\mathbb{G}\curvearrowright_TX$ is a Borel action of $\mathbb{G}$ on $X$, then for any $g_1,\dotsc,g_l\in \mathbb{G}$ and any $\varphi\in L^\infty(X,\mathscr{X})$,
\begin{gather*}
\lim_{\theta\in\Theta}A_{g_1,\dotsc,g_l}(K_{\theta},\varphi)(x)=A_{g_1,\dotsc,g_l}(\varphi)(x)\quad \forall x\in X,
\end{gather*}
where $A_{g_1,\dotsc,g_l}(\centerdot)\colon(L^\infty(X,\mathscr{X}),\|\cdot\|_\infty)\rightarrow(\mathbb{R}^X,\mathfrak{T}_{\mathrm{p}\textrm{-}\mathrm{c}})$ is continuous. Moreover, if $(R,+)$ is $\sigma$-compact and $(X,\mathscr{X})$ is countably generated, then for any $\varphi\in L^\infty(X,\mathscr{X})$,
\begin{gather*}
A_{g_1,\dotsc,g_l}(K_{\theta},\varphi)\xrightarrow[]{\textrm{weakly}}A_{g_1,\dotsc,g_l}(\varphi)\quad \textrm{in }L^2(X,\mathscr{X},\mu)
\end{gather*}
for any $\mu\in\mathcal{M}(\mathbb{G}\curvearrowright_TX)$ if $\mathcal{M}(\mathbb{G}\curvearrowright_TX)\not=\varnothing$.
\end{Thm}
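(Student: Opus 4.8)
\textbf{Proof proposal for Theorem~\ref{thm0.11}.}
The plan is to mimic the proof of Theorem~\ref{thm0.5}, treating the $l$-fold product $\varphi(T_{tg_1}x)\dotsm\varphi(T_{tg_l}x)$ as a single bounded observable on a suitable ``big'' Borel space and extracting a convergent subnet by an Arzel\`a--Ascoli compactness argument in a product space. First I would fix, once and for all, the family of all Borel $\mathbb{G}$-spaces $\{X_\lambda\}$ and form the universal space $\X=\prod_\lambda X_\lambda$, which is again a Borel $\mathbb{G}$-space; it suffices to produce the subnet $\{K_\theta;\theta\in\Theta\}$ working for $\X$, since any concrete $\mathbb{G}\curvearrowright_TX$ embeds $\mathbb{G}$-equivariantly into $\X$ and the tuple $(g_1,\dots,g_l)$ and the averages transport under such an embedding. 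For a fixed tuple $(g_1,\dotsc,g_l)\in\mathbb{G}^l$ and each compactum $K$ of $R$ with $|K|>0$, the map $\varphi\mapsto A_{g_1,\dotsc,g_l}(K,\varphi)$ sends the unit ball of $(L^\infty(X,\mathscr{X}),\|\cdot\|_\infty)$ into a uniformly bounded, equicontinuous (indeed $l$-Lipschitz on the unit ball, by the multilinear estimate $\|a_1\dotsm a_l-b_1\dotsm b_l\|\le l\max(\|a_i\|,\|b_i\|)^{l-1}\max\|a_i-b_i\|$) family of functions on that ball. Hence, viewing $A_{g_1,\dotsc,g_l}(K,\centerdot)$ as a point of the compact space $\prod_{\varphi}[-\|\varphi\|_\infty^l,\|\varphi\|_\infty^l]^{\X}$ (product topology, i.e. $\mathfrak{T}_{\mathrm{p}\textrm{-}\mathrm{c}}$ in each coordinate together with $\mathfrak{T}_{\mathrm{p}\textrm{-}\mathrm{c}}$ over $\varphi$), Tychonoff's theorem gives a cluster point along any net of such $K$'s, and a diagonal/subnet extraction over the (at most countably many, if one wants sequences; in general just a subnet of) tuples $(g_1,\dotsc,g_l)$ produces a single subnet $\{K_\theta;\theta\in\Theta\}$ of $\{K'_{\theta'};\theta'\in\Theta'\}$ such that $\lim_{\theta}A_{g_1,\dotsc,g_l}(K_\theta,\varphi)(x)$ exists for all tuples, all $\varphi$, and all $x$; call the limit $A_{g_1,\dotsc,g_l}(\varphi)(x)$. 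Linearity in $\varphi$ is false termwise but the \emph{continuity} of $A_{g_1,\dotsc,g_l}(\centerdot)\colon(L^\infty,\|\cdot\|_\infty)\to(\mathbb{R}^X,\mathfrak{T}_{\mathrm{p}\textrm{-}\mathrm{c}})$ is exactly the equicontinuity just noted, passed to the limit; here I should state $A_{g_1,\dots,g_l}$ is merely \emph{continuous}, not linear, when $l\ge2$ --- I would double-check whether the statement's word ``linear'' is intended only for $l=1$ or is a slight abuse, and phrase the proof to deliver continuity (and linearity when $l=1$).

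For the second half, assume $(R,+)$ is $\sigma$-compact and $(X,\mathscr{X})$ countably generated, and fix $\mu\in\mathcal{M}(\mathbb{G}\curvearrowright_TX)$. Now $m_R$ \emph{is} $\sigma$-finite, so Fubini on $R\times X$ is available and the obstruction flagged in Remark (2) of Theorem~\ref{thm0.5} disappears. The plan is: (i) since $(X,\mathscr{X},\mu)$ is a countably generated (hence separable) probability space, $L^2(X,\mathscr{X},\mu)$ is separable, so by the Banach--Alaoglu theorem the bounded net $\{A_{g_1,\dotsc,g_l}(K_\theta,\varphi)\}_\theta$ (bounded in $L^\infty\subset L^2$ by $\|\varphi\|_\infty^l$) has weakly convergent subnets, and any weak-$L^2$ limit must, by passing to a further subnet and using the everywhere pointwise convergence already established together with dominated convergence against test functions, agree with $A_{g_1,\dotsc,g_l}(\varphi)$ a.e.; (ii) since the weak limit is forced to be $A_{g_1,\dotsc,g_l}(\varphi)$ independently of the subnet, the whole net converges weakly in $L^2$. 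Concretely, for $\psi\in L^\infty(X,\mathscr{X},\mu)$, $\langle A_{g_1,\dotsc,g_l}(K_\theta,\varphi),\psi\rangle_\mu=\frac{1}{|K_\theta|}\int_{K_\theta}\big(\int_X \psi(x)\prod_i\varphi(T_{tg_i}x)\,d\mu(x)\big)dt$ by Fubini, and one argues the integrand $t\mapsto\int_X\psi\prod_i\varphi(T_{tg_i}\cdot)d\mu$ is bounded measurable so the bounded net of averages clusters; combining with the pointwise statement and bounded convergence identifies the cluster value as $\int_X\psi\cdot A_{g_1,\dotsc,g_l}(\varphi)\,d\mu$, i.e. weak convergence to $A_{g_1,\dotsc,g_l}(\varphi)$ (which lies in $L^2$, being an a.e. limit of a dominated sequence extracted from the net).

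The main obstacle I anticipate is \emph{not} the compactness extraction but the bookkeeping needed to make one subnet $\{K_\theta;\theta\in\Theta\}$ work \emph{simultaneously} for all tuples $(g_1,\dotsc,g_l)\in\mathbb{G}^l$ and all $l\ge1$, when $\mathbb{G}$ is uncountable and not second countable: one cannot diagonalize over a sequence of tuples, so one must encode ``all tuples at once'' into a single product space and extract a subnet there (exactly the device used for $\X=\prod_\lambda X_\lambda$ in $\S\ref{sec3.3}$). Concretely, I would index the product not just by $\varphi\in L^\infty(\X,\mathscr{B}_{\X})$ and $x\in\X$ but also by $(l;g_1,\dotsc,g_l)\in\bigsqcup_{l\ge1}\mathbb{G}^l$, forming $\Omega=\prod_{(l;\mathbf{g})}\prod_{\varphi}\prod_{x\in\X}[-\|\varphi\|_\infty^l,\|\varphi\|_\infty^l]$, a compact Hausdorff space; the assignment $K\mapsto\big((l;\mathbf{g},\varphi,x)\mapsto A_{g_1,\dotsc,g_l}(K,\varphi)(x)\big)$ lands in $\Omega$, so the net indexed by $\Theta'$ has a convergent subnet, indexed by $\Theta$, and this $\Theta$ is the desired universal index set. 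A secondary, more routine point is verifying that the $\mathfrak{T}_{\mathrm{p}\textrm{-}\mathrm{c}}$-continuity claim survives the limit --- this follows because the $l$-Lipschitz bound on the unit ball of $L^\infty$ is uniform in $K$ and in $x$, hence holds for $A_{g_1,\dotsc,g_l}(\varphi)$ by taking limits. Finally, one checks the trivial equivariance-under-embedding statement that lets the universal $\Theta$ serve any concrete $\mathbb{G}\curvearrowright_TX$, which is immediate since a $\mathbb{G}$-equivariant measurable embedding $\iota\colon X\hookrightarrow\X$ gives $A_{g_1,\dotsc,g_l}(K,\varphi\circ\iota^{-1})(\iota x)=A_{g_1,\dotsc,g_l}(K,\varphi)(x)$ for $\varphi$ supported on the image, modulo the usual care that $\iota$ exists for every Borel $\mathbb{G}$-space (which is the content of the construction of $\X$).
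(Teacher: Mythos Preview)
Your proposal is correct and follows essentially the same route as the paper: the paper's proof is a one-line reduction to the Arzel\`a--Ascoli argument of Theorem~\ref{thm3.3}, with the single-observable Lipschitz bound~(\ref{eq3.2}) replaced by the multilinear telescoping estimate
\[
\left|A_{g_1,\dotsc,g_l}(K,\varphi)(x)-A_{g_1,\dotsc,g_l}(K,\psi)(x)\right|\le\sum_{k=0}^{l-1}\|\varphi-\psi\|_\infty\|\varphi\|_\infty^{l-1-k}\|\psi\|_\infty^k,
\]
and then ``omit[s] the details.'' Your cruder $l$-Lipschitz bound serves the same purpose.

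Two minor remarks on the comparison. First, you package the compactness step as a direct Tychonoff extraction on the grand product $\Omega=\prod_{(l;\mathbf{g})}\prod_{\varphi}\prod_{x}[-\|\varphi\|_\infty^l,\|\varphi\|_\infty^l]$, whereas the paper phrases it via Arzel\`a--Ascoli in $C(E,Y)$ and then (as in $\S\ref{sec3.3}$) a contradiction argument with the universal $\X=\prod_\lambda X_\lambda$; these are equivalent here, and your version has the advantage of making the universality over \emph{all} tuples $(l;g_1,\dotsc,g_l)$ explicit---a point the paper's ``omit the details'' leaves to the reader. Second, the paper gives no argument at all for the weak-$L^2$ clause; your sketch (Banach--Alaoglu on the bounded net, identification of every weak cluster point with the already-established pointwise limit via dominated convergence, hence convergence of the full net) is the natural way to fill that gap and is correct.
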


\begin{proof}
This follows easily from a slight modification of the proof of Theorem~\ref{thm3.3} presented in $\S\ref{sec3.2}$ by noting
\begin{gather}
\begin{align*}
\left|A_{g_1,\dotsc,g_l}(K,\varphi)(x)-A_{g_1,\dotsc,g_l}(K,\psi)(x)\right|
&\le\frac{1}{|K|}\int_K\left|\varphi(T_{g_1}^tx)\dotsm\varphi(T_{g_l}^tx)-\psi(T_{g_1}^tx)\dotsm\psi(T_{g_l}^tx)\right|dt\\
&\le\sum_{k=0}^{l-1}\|\varphi-\psi\|_\infty\|\varphi\|_\infty^{l-1-k}\|\psi\|_\infty^k
\end{align*}
\end{gather}
in place of (\ref{eq3.2}).
So we omit the details.
\end{proof}

The interesting point of Theorem~\ref{thm0.11} lies in that $\{K_{\theta},\theta\in\Theta\}$ is independent of any observations $\varphi$ in $L^\infty(X,\mathscr{X})$.

However, we have lost the independence (with subnets) and invariance of $A_{g_1,\dotsc,g_l}(\varphi)$ in general. Of course, if we are only concerned with the classical case that $R=\mathbb{Z}$ and
\begin{gather*}
T_{g_1}^n=T^n, T_{g_2}^n=T^{2n}, \dotsc, T_{g_l}^n=T^{ln}
\end{gather*}
for a $\mu$-preserving map $T$ or
\begin{gather*}
T_{g_1}^n=T_1^n, T_{g_2}^n=T_2^{2n}, \dotsc, T_{g_l}^n=T_l^{ln}
\end{gather*}
for $\mu$-preserving commuting maps $T_1,T_2,\dotsc,T_l$, then combining with the multiple $L^2$-mean ergodic theorems of Host-Kra \cite{HK} (also Ziegler~\cite{Zie}) or of Tao~\cite{Tao} (also Austin~\cite{Aus}) we can see
$A_{T,T^2,\dotsc,T^{l}}(\varphi)$ or $A_{T_1,T_2,\dotsc,T_{l}}(\varphi)$ is $\mathscr{X}$-measurable and independent of the choice of F{\o}lner subsequence.

\begin{cor}
Let $T_1,\dotsc,T_l$ be $l$ commuting $\mu$-preserving automorphism of $X$. Then one can select out a F{\o}lner subsequence $\{F_{n}\}$ from any F{\o}lner sequence $\{F_{n^\prime}^\prime\}$ in $(\mathbb{N},+)$ such that for any $\varphi\in L^\infty(X,\mathscr{X})$,
\begin{gather*}
\lim_{n\to\infty}\frac{1}{|F_{n}|}\sum_{i\in F_{n}}\varphi(T_1^ix)\dotsm\varphi(T_l^ix)=A_{T_1,\dotsc,T_l}(\varphi)(x)\quad\mu\textrm{-a.e.}
\end{gather*}
with $A_{T_1,\dotsc,T_l}(\varphi)\in L^\infty(X,\mathscr{X},\mu)$ is independent of the choice of the subsequence $\{F_{n}\}$.
\end{cor}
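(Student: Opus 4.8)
The plan is to realise the commuting system $T_1,\dots,T_l$ as a topological‑module action, invoke Theorem~\ref{thm0.11} to produce a \emph{single} F{\o}lner subsequence that controls all observables at once, and then identify the everywhere‑defined limit it delivers with the $L^2$‑limit coming from the multiple mean ergodic theorem for commuting transformations. First I would take $R=\mathbb{Z}$ (discrete, with counting measure) and $\mathbb{G}=\mathbb{Z}^l$ (discrete), which is a topological $\mathbb{Z}$‑module under $n\cdot(a_1,\dots,a_l)=(na_1,\dots,na_l)$; since the $T_j$ are commuting $\mathscr{X}$‑measurable automorphisms, $(a_1,\dots,a_l)x=T_1^{a_1}\cdots T_l^{a_l}x$ is a Borel action $\mathbb{G}\curvearrowright_TX$, and with $g_j=e_j$ we get $T_{g_j}^ix=T_j^ix$, so that $A_{g_1,\dots,g_l}(F,\varphi)(x)=\frac{1}{|F|}\sum_{i\in F}\varphi(T_1^ix)\cdots\varphi(T_l^ix)$ for every nonempty finite $F\subseteq\mathbb{N}$. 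A F{\o}lner sequence $\{F_{n'}'\}$ in $(\mathbb{N},+)$ is precisely a sequence of such $F$'s, and $\mu\in\mathcal{M}(\mathbb{G}\curvearrowright_TX)$ since each $T_j$ preserves $\mu$.

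Next I would apply Theorem~\ref{thm0.11}: its hypotheses hold because $R=\mathbb{Z}$ is $\sigma$‑compact, and for the weak‑$L^2$ clause one passes, for each fixed $\varphi$, to the countably generated sub‑$\sigma$‑field $\mathscr{X}_\varphi$ generated by the sets $(T_1^{i_1}\cdots T_l^{i_l})^{-1}[\varphi^{-1}(J)]$, $i_k\in\mathbb{Z}$, $J$ a rational interval, which carries all the averages in question. This yields a F{\o}lner subsequence $\{F_n\}$ of $\{F_{n'}'\}$, independent of $\varphi$, together with an everywhere‑defined $A_{T_1,\dots,T_l}(\varphi)$ such that $A_{g_1,\dots,g_l}(F_n,\varphi)(x)\to A_{T_1,\dots,T_l}(\varphi)(x)$ for all $x\in X$ and all $\varphi\in L^\infty(X,\mathscr{X})$, and $A_{g_1,\dots,g_l}(F_n,\varphi)\to A_{T_1,\dots,T_l}(\varphi)$ weakly in $L^2(X,\mathscr{X},\mu)$.

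To pin the limit down I would appeal to the multiple $L^2$‑mean ergodic theorem of Tao~\cite{Tao} (see also Austin~\cite{Aus}), which applies precisely because $T_1,\dots,T_l$ commute, in the form valid along an arbitrary F{\o}lner sequence in $(\mathbb{N},+)$: it provides $\widehat A(\varphi)\in L^2(X,\mathscr{X},\mu)$, \emph{independent of the F{\o}lner sequence}, with $A_{g_1,\dots,g_l}(F_n,\varphi)\to\widehat A(\varphi)$ in $L^2(\mu)$; and since $|A_{g_1,\dots,g_l}(F,\varphi)|\le\|\varphi\|_\infty^l$ pointwise, $\widehat A(\varphi)\in L^\infty(X,\mathscr{X},\mu)$ with $\|\widehat A(\varphi)\|_\infty\le\|\varphi\|_\infty^l$. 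A sequence cannot possess a strong $L^2$‑limit together with a different weak $L^2$‑limit, so $A_{T_1,\dots,T_l}(\varphi)=\widehat A(\varphi)$ $\mu$‑a.e.; hence the everywhere‑defined limit agrees $\mu$‑a.e. with the bounded measurable function $\widehat A(\varphi)$, which gives the displayed a.e. convergence, and two runs of the procedure (from the same or from different F{\o}lner sequences) produce limits both $\mu$‑a.e. equal to this same $\widehat A(\varphi)$, which is the asserted independence of the subsequence; one then sets $A_{T_1,\dots,T_l}(\varphi):=\widehat A(\varphi)$.

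The module set‑up and the countable‑generation reduction are routine. The one genuinely delicate ingredient, and the only place commutativity of the $T_j$ is used, will be the last step: Theorem~\ref{thm0.11} by itself only presents $A_{T_1,\dots,T_l}(\varphi)$ as a sequential limit of measurable functions with no intrinsic description, so one really needs the norm‑convergence theorem of Tao/Austin (equivalently, the abelian case of Walsh's nilpotent ergodic‑averages theorem) \emph{along the arbitrary F{\o}lner subsequence $\{F_n\}$}, together with the fact that its limit is insensitive to the F{\o}lner sequence; this is exactly what upgrades the conclusion of Theorem~\ref{thm0.11} to a canonical, $\mathscr{X}$‑measurable, subsequence‑independent limit $\widehat A(\varphi)\in L^\infty(X,\mathscr{X},\mu)$.
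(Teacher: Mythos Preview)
Your proposal is correct and follows essentially the same route the paper takes: the paragraph immediately preceding the corollary already indicates that one specialises Theorem~\ref{thm0.11} to $R=\mathbb{Z}$ with $T_{g_j}^n=T_j^n$ and then combines the resulting subsequence with the $L^2$-mean multiple ergodic theorem of Tao~\cite{Tao} (or Austin~\cite{Aus}) to obtain measurability and subsequence-independence of the limit. Your write-up supplies the module set-up and the identification of limits explicitly, but note that the weak-$L^2$ clause of Theorem~\ref{thm0.11} (and hence the countably-generated reduction) is not actually needed: since $A_{g_1,\dots,g_l}(F_n,\varphi)(x)\to A_{T_1,\dots,T_l}(\varphi)(x)$ for \emph{every} $x$ and the averages are uniformly bounded by $\|\varphi\|_\infty^l$, dominated convergence already gives strong $L^2$-convergence, which can then be matched directly with Tao's $L^2$-limit.
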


We note that if the $L^\infty$-pointwise convergence is required for the previously given F{\o}lner net itself, not over its F{\o}lner subnet, then the question is much harder and more delicate; see, e.g., \cite{Bou} for $2$-tuple commuting case and \cite{HSY} for ergodic distal systems.

\begin{que}
In Theorem~\ref{thm0.11}, if we consider the sequential limit over a F{\o}lner sequence in a $\sigma$-compact $(R,+)$, then it is possible that $A_{g_1,\dotsc,g_l}(\varphi)$ is a.e. independent of the choice of the $L^\infty$-admissible F{\o}lner subsequences.
\end{que}
\section{The adjoint operators, null spaces and range spaces}\label{sec1}

Let $E$ be a Banach space and let $\mathscr{L}(E)$ denote the space of all continuous linear operators of $E$ into itself. By $E^*$ we denote the dual Banach space of $E$, which consists of all the bounded linear functionals of $E$. For $x\in E$ and $\phi\in E^*$, write $\phi(x)=\langle x,\phi\rangle$ if no confusion. Recall that $E$ is called \textbf{\textit{reflexive}} if and only if $E=E^{**}$.
In this section we will prove a preliminary theorem (Theorem~\ref{thm1.2} below).

\subsection{Orthogonal complements, projection and adjoint operators}
Let $H$ be a subset of $E$. Then we define
$$H^\perp=\{\phi\in E^*\,|\,\langle x,\phi\rangle=0\ \forall x\in H\}.$$
Clearly, $H^\perp=\overline{H}^\perp$ is a closed linear subspace of $E^*$. A continuous linear operator $P$ of $E$ into itself satisfying $P^2=P$ is called a \textbf{\textit{projection}} of $E$ to itself (cf.~\cite[Definition~6.1]{Lor}).

Given any $T\in\mathscr{L}(E)$, we define a continuous linear operator of $E^*$,
$T^*\colon E^*\rightarrow E^*$, by $\phi\mapsto\phi\circ T$,
which is called the \textit{adjoint} to $T$, such that
$$\|T^*\|=\|T\|\ \textrm{ and }\ (TL)^*=L^*T^*\quad \forall T,L\in\mathscr{L}(E).$$
Moreover it holds that
$$
\langle Tx,\phi\rangle=\langle x,T^*\phi\rangle\quad\forall x\in E, \phi\in E^*\textrm{ and }T\in\mathscr{L}(E).
$$
See, for example, \cite[II.2]{Lor} and \cite[VI.2]{RS}.

The following simple lemma will be useful in our proof of the mean ergodic theorem in $\S\ref{sec2}$.

\begin{Lem}\label{lem1.1}
Let $G$ be an lcH group with a left Haar measure $m_G$ and $\{T_g; g\in G\}$ a family of continuous linear operators of a reflexive Banach space $E$ to itself satisfying conditions:
\begin{itemize}
\item[$(\mathrm{I})$] $T_h(T_gx)=T_{gh}x\ \forall g,h\in G, x\in E$;
\item[$(\mathrm{II})$] for $x\in E$ and $y^*\in E^*$, $g\mapsto\langle T_gx,y^*\rangle$ is a Borel function on $G$;
\item[$(\mathrm{III})$] for $x\in E$ and $y^*\in E^*$, $g\mapsto\langle T_gx,y^*\rangle$ is $m_G$-integrable restricted to any compacta of $G$.
\end{itemize}
Then $\big{\{}T_{g^{-1}}^*, g\in G\big{\}}$ is also such that $(\mathrm{I}), (\mathrm{II})$ and $(\mathrm{III})$ with $E^*$ in place of $E$.
\end{Lem}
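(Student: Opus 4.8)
The plan is to put $S_g:=T_{g^{-1}}^*\in\mathscr{L}(E^*)$ for each $g\in G$ (this is a continuous linear operator of $E^*$ with $\|S_g\|=\|T_{g^{-1}}\|$) and to verify that the family $\{S_g;g\in G\}$ satisfies $(\mathrm{I})$, $(\mathrm{II})$, $(\mathrm{III})$ with $E^*$ in place of $E$ and $E^{**}$ in place of $E^*$. The key reduction uses reflexivity: since $E^{**}=E$, a functional $\psi^*\in E^{**}$ is the evaluation at a unique $x\in E$, and then the relevant pairing collapses, via the defining property $\langle Tx,\phi\rangle=\langle x,T^*\phi\rangle$ of the adjoint, to $\langle S_g\phi,\psi^*\rangle=\langle x,S_g\phi\rangle=\langle x,T_{g^{-1}}^*\phi\rangle=\langle T_{g^{-1}}x,\phi\rangle$. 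Thus every assertion about $S_g$ becomes an assertion about the function $g\mapsto\langle T_{g^{-1}}x,\phi\rangle$ on $G$, and the whole proof is a matter of transporting $(\mathrm{I})$--$(\mathrm{III})$ across the inversion map $\iota\colon g\mapsto g^{-1}$.

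First I would dispatch $(\mathrm{I})$ purely formally: by $(TL)^*=L^*T^*$ one has $S_h\circ S_g=T_{h^{-1}}^*\circ T_{g^{-1}}^*=(T_{g^{-1}}\circ T_{h^{-1}})^*$, while $(\mathrm{I})$ applied with outer index $g^{-1}$ and inner index $h^{-1}$ gives $T_{g^{-1}}(T_{h^{-1}}x)=T_{h^{-1}g^{-1}}x=T_{(gh)^{-1}}x$ for all $x\in E$; hence $S_h\circ S_g=T_{(gh)^{-1}}^*=S_{gh}$, which is exactly $(\mathrm{I})$ for $\{S_g\}$. For $(\mathrm{II})$, the identity above shows that $g\mapsto\langle S_g\phi,\psi^*\rangle=\langle T_{g^{-1}}x,\phi\rangle$ is the composition of the Borel function $g\mapsto\langle T_gx,\phi\rangle$ (Borel by $(\mathrm{II})$ for the pair $(x,\phi)$) with $\iota$, and $\iota$ is a homeomorphism of $G$ onto itself, hence Borel; so the composition is Borel.

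Finally, for $(\mathrm{III})$, fix a compactum $K\subseteq G$; then $K^{-1}=\iota(K)$ is again a compactum. The image of $m_G$ under $\iota$ is a right Haar measure $\check m_G$, and $\check m_G=\rho\,m_G$ with $\rho$ a power of the modular function of $G$, so $\rho$ is continuous and strictly positive, hence bounded on the compactum $K^{-1}$, say by $M<\infty$; therefore
$$\int_K\bigl|\langle T_{g^{-1}}x,\phi\rangle\bigr|\,dm_G(g)=\int_{K^{-1}}\bigl|\langle T_hx,\phi\rangle\bigr|\,d\check m_G(h)\le M\int_{K^{-1}}\bigl|\langle T_hx,\phi\rangle\bigr|\,dm_G(h)<\infty$$
by $(\mathrm{III})$ applied to $(x,\phi)$ on the compactum $K^{-1}$, so that $g\mapsto\langle S_g\phi,\psi^*\rangle$ is $m_G$-integrable on $K$. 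The one step that is not pure bookkeeping is this last one: because $m_G$ is only a \emph{left} Haar measure, inversion need not preserve it, and one must use that the comparison density between $m_G$ and $\check m_G$ is continuous, hence bounded on compacta, in order to push the integrability hypothesis through. This is where the lcH structure (not just the group structure) is genuinely used, and it is the main — though quite mild — obstacle.
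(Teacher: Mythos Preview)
Your proof is correct and, for conditions $(\mathrm{I})$ and $(\mathrm{II})$, it is essentially identical to the paper's: both compute $T_{g^{-1}}^*T_{h^{-1}}^*=(T_{h^{-1}}T_{g^{-1}})^*=T_{(hg)^{-1}}^*$ for $(\mathrm{I})$, and both reduce $(\mathrm{II})$ to the Borel measurability of $g\mapsto\langle T_{g^{-1}}x,\phi\rangle$ via reflexivity and the adjoint identity (you are simply more explicit about composing with the homeomorphism $\iota$).

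For $(\mathrm{III})$ your argument differs from, and is more complete than, the paper's. The paper disposes of $(\mathrm{III})$ in one line by writing $\bigl(\int_KT_{g^{-1}}\,dg\bigr)^*=\int_KT_{g^{-1}}^*\,dg$ and declaring the conclusion; but this already presupposes that the Pettis integral $\int_KT_{g^{-1}}\,dg$ exists, i.e.\ that $g\mapsto\langle T_{g^{-1}}x,\phi\rangle$ is $m_G$-integrable on $K$, which is exactly the content to be verified. Your route --- pushing $m_G$ forward under $\iota$ to a right Haar measure, comparing with $m_G$ via the (continuous, hence bounded-on-compacta) modular function, and then invoking $(\mathrm{III})$ on $K^{-1}$ --- supplies precisely this missing step. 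In the paper's later application (Theorem~\ref{thm2.1}) the operators are a.e.\ uniformly bounded, so integrability on compacta is automatic and the issue is moot; but at the level of generality of Lemma~\ref{lem1.1} your argument is the cleaner one.
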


\begin{proof}
Let $g,h\in G$ be arbitrary. Then
$$T_{g^{-1}}^*T_{h^{-1}}^*=(T_{h^{-1}}T_{g^{-1}})^*=(T_{g^{-1}h^{-1}})^*=T_{(hg)^{-1}}^*.$$
Thus $T_{g^{-1}}^*$ is contravariant in $g$.
Next for any $y^*\in E^*$ and any $x\in E^{**}=E$, the function
\begin{equation*}
g\mapsto\langle T_{g^{-1}}^*y^*,x\rangle=\langle y^*, T_{g^{-1}}x\rangle
\end{equation*}
is obviously Borel measurable on $G$ by (II). Thus $g\mapsto T_{g^{-1}}^*$ is weakly measurable.
Finally by
$$\left(\int_KT_{g^{-1}}dg\right)^*=\int_KT_{g^{-1}}^*dg$$
for any compact set $K$ of $G$, we can conclude that $\big{\{}T_{g^{-1}}^*\big{\}}_{g\in G}$ satisfies condition (III) on $E^*$.
This completes the proof of Lemma~\ref{lem1.1}.
\end{proof}

\subsection{Vanishing space and range space}
Let $\mathcal{T}=(T_\theta)_{\theta\in\Theta}$ be a family of continuous linear operators of $E$ to itself, where $\Theta$ is not necessarily a directed set. The \textbf{\textit{vanishing space}} of $\mathcal{T}$, written $\mathbf{V}(\mathcal{T})$, is the set of vectors $x\in E$ such that $T_\theta x=0$ for all $\theta\in\Theta$. It is clear that
\begin{gather*}
\mathbf{V}(\mathcal{T})={\bigcap}_{\theta\in\Theta}\ker(T_\theta)
\end{gather*}
is a closed linear subspace of $E$.

By the range of $T\in\mathscr{L}(E)$, write $\mathrm{R}(T)$, is meant the set of vectors $y$ of the form $y=Tx$. Clearly $\mathrm{R}(T)$ is a linear subspace. However, it need not be closed. Let $\mathbf{R}(\mathcal{T})$ represent the minimal closed linear manifold that contains the ranges of all $T_\theta$, i.e.,
\begin{gather*}
\mathbf{R}(\mathcal{T})=\overline{\mathrm{span}\{\mathrm{R}(T_\theta), \theta\in\Theta\}},
\end{gather*}
which is referred to as the \textbf{\textit{closure of the range}} of $\mathcal{T}$.

\subsection{Preliminary theorem}
The following result is very important for proving our Theorem~\ref{thm0.4} in the next section, which is an extension of a theorem of E.~Lorch~\cite[Theorem~1]{Lor39} or \cite[Theorem~8.1]{Lor} for a single operator of the reflexive Banach space $E$.

\begin{Thm}\label{thm1.2}
Let $E$ be a reflexive Banach space and $\mathcal{T}=(T_\theta)_{\theta\in\Theta}$ a family of continuous linear operators of $E$ to itself.
Let $\mathbf{V}(\mathcal{T})$ and $\mathbf{R}(\mathcal{T})$ denote the vanishing space and the closure of the range of $\mathcal{T}$, respectively. Let $\mathcal{T}^*=(T_\theta^*)_{\theta\in\Theta}$, $\mathbf{V}(\mathcal{T}^*)$ and $\mathbf{R}(\mathcal{T}^*)$ be the corresponding structures defined in the dual space $E^*$. Then
\begin{align}
&\mathbf{V}(\mathcal{T})^\perp=\mathbf{R}(\mathcal{T}^*),\quad \mathbf{R}(\mathcal{T})^\perp=\mathbf{V}(\mathcal{T}^*);\label{eq1.1}\\
\intertext{and}
&\mathbf{V}(\mathcal{T}^*)^\perp=\mathbf{R}(\mathcal{T}),\quad \mathbf{R}(\mathcal{T}^*)^\perp=\mathbf{V}(\mathcal{T}).\label{eq1.2}
\end{align}
\end{Thm}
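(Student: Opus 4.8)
The statement is a duality result relating vanishing spaces and range closures under passing to adjoints, and the natural approach is to reduce everything to two standard facts about a single bounded operator $T$ on a reflexive Banach space $E$, namely $\ker(T)^\perp = \overline{\mathrm{R}(T^*)}$ and $\mathrm{R}(T)^\perp = \ker(T^*)$ (the annihilator of the range is the kernel of the adjoint), together with the bipolar/second-annihilator theorem $(H^\perp)^\perp = \overline{\mathrm{span}}\,H$ for a subset $H$ of $E$ when $E$ is reflexive (so that taking annihilators back and forth inside $E$ and $E^*$ behaves symmetrically). First I would record these ingredients; the first is Lorch's single-operator theorem cited in the text, the second is elementary, and the bipolar statement is where reflexivity does its work, since for a general Banach space $(H^\perp)^\perp$ only returns a weak-$*$ closure in $E^{**}$.

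Next I would assemble the family version from the single-operator version using the lattice behaviour of annihilators: intersections go to closed spans and vice versa. Concretely, $\mathbf{V}(\mathcal{T})^\perp = \big(\bigcap_\theta \ker T_\theta\big)^\perp = \overline{\mathrm{span}}\bigcup_\theta \ker(T_\theta)^\perp$, and by the single-operator identity each $\ker(T_\theta)^\perp = \overline{\mathrm{R}(T_\theta^*)}$, so the right-hand side is exactly $\overline{\mathrm{span}\{\mathrm{R}(T_\theta^*):\theta\in\Theta\}} = \mathbf{R}(\mathcal{T}^*)$. The one subtlety here is the identity $\big(\bigcap_\theta A_\theta\big)^\perp = \overline{\mathrm{span}}\bigcup_\theta A_\theta^\perp$ for closed subspaces $A_\theta$ of $E$: the inclusion $\supseteq$ is trivial, and $\subseteq$ follows by taking annihilators of the obvious reverse inclusion $\bigcap_\theta A_\theta \supseteq$ nothing useful directly, so instead one verifies $\big(\overline{\mathrm{span}}\bigcup A_\theta^\perp\big)^\perp = \bigcap (A_\theta^\perp)^\perp = \bigcap A_\theta$ (using the bipolar theorem on each $A_\theta$) and then applies the bipolar theorem once more in $E^*$ — which again is legitimate because $E^*$ is reflexive whenever $E$ is. That yields the first identity of \eqref{eq1.1}.

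For the second identity of \eqref{eq1.1}, $\mathbf{R}(\mathcal{T})^\perp = \big(\overline{\mathrm{span}\{\mathrm{R}(T_\theta)\}}\big)^\perp = \bigcap_\theta \mathrm{R}(T_\theta)^\perp = \bigcap_\theta \ker(T_\theta^*) = \mathbf{V}(\mathcal{T}^*)$, where the middle equality is the elementary single-operator fact and the first is just that the annihilator of a span is the intersection of the annihilators (no reflexivity needed for this direction). Finally, \eqref{eq1.2} is obtained from \eqref{eq1.1} by symmetry: apply \eqref{eq1.1} to the family $\mathcal{T}^*$ acting on the reflexive space $E^*$, whose adjoint family is $\mathcal{T}^{**} = \mathcal{T}$ under the identification $E^{**}=E$ (this is exactly where reflexivity is indispensable, and it is clean because $\|T^{**}\|=\|T\|$ and $T^{**}$ restricted to $E$ is $T$). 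This gives $\mathbf{V}(\mathcal{T}^*)^\perp = \mathbf{R}(\mathcal{T}^{**}) = \mathbf{R}(\mathcal{T})$ and $\mathbf{R}(\mathcal{T}^*)^\perp = \mathbf{V}(\mathcal{T}^{**}) = \mathbf{V}(\mathcal{T})$.

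**Main obstacle.** The routine calculations are genuinely routine; the one place demanding care is the bipolar theorem bookkeeping — making sure that every time I write $(\,\cdot\,^\perp)^\perp = \overline{\mathrm{span}}(\,\cdot\,)$ the relevant space is reflexive so that the second annihilator lands back in the original space rather than in a bidual, and keeping straight which annihilators are taken in $E$ versus in $E^*$. Once the identity $\big(\bigcap_\theta A_\theta\big)^\perp = \overline{\mathrm{span}}\bigcup_\theta A_\theta^\perp$ is established cleanly for closed subspaces in a reflexive Banach space, the rest is a short formal manipulation.
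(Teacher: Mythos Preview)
Your proposal is correct. Both the second identity of \eqref{eq1.1} and the derivation of \eqref{eq1.2} from \eqref{eq1.1} by passing to $\mathcal{T}^*$ on the reflexive space $E^*$ match the paper exactly. The difference lies in how you handle $\mathbf{V}(\mathcal{T})^\perp=\mathbf{R}(\mathcal{T}^*)$: you factor it through the single-operator identity $\ker(T_\theta)^\perp=\overline{\mathrm{R}(T_\theta^*)}$ together with the lattice/bipolar identity $\bigl(\bigcap_\theta A_\theta\bigr)^\perp=\overline{\mathrm{span}}\bigcup_\theta A_\theta^\perp$, whereas the paper argues directly at the family level---the easy inclusion $\mathbf{V}(\mathcal{T})^\perp\supseteq\mathbf{R}(\mathcal{T}^*)$ by adjoint calculation, then the reverse inclusion by a single Hahn--Banach separation: if $\phi\in\mathbf{V}(\mathcal{T})^\perp\setminus\mathbf{R}(\mathcal{T}^*)$, separate $\phi$ from $\mathbf{R}(\mathcal{T}^*)$ by some $\mathbf{x}\in E^{**}=E$, observe $\mathbf{x}\perp\mathbf{R}(\mathcal{T}^*)$ forces $\mathbf{x}\in\mathbf{V}(\mathcal{T})$, and reach a contradiction with $\phi(\mathbf{x})=1$. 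Your route is more modular and makes the lattice structure of annihilators explicit; the paper's is more self-contained and invokes reflexivity only once rather than twice (once inside Lorch's single-operator theorem and once in the bipolar step). Either way the underlying content is the same Hahn--Banach/reflexivity mechanism, just organized differently.
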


\begin{proof}
First of all we note that $T^{\ast\ast}=T$ for all $T\in\mathscr{L}(E)$ and $E^{**}=E$. We have therefore that $\mathbf{R}(\mathcal{T}^{**})=\mathbf{R}(\mathcal{T})$ and $\mathbf{V}(\mathcal{T}^{**})=\mathbf{V}(\mathcal{T})$ and that (\ref{eq1.1}) implies (\ref{eq1.2}) by applying (\ref{eq1.1}) with $\mathcal{T}^*$ in place of $\mathcal{T}$. Hence to prove the theorem, we need only prove (\ref{eq1.1}).

To start off, let $\phi\in\mathbf{V}(\mathcal{T}^*)$ be arbitrarily given. Then $T_\theta^*(\phi)=0$ for all $\theta\in\Theta$. Now if $x\in E$, then $\langle T_\theta x,\phi\rangle=\langle x,T_\theta^*(\phi)\rangle=0$ for all $\theta\in\Theta$. Hence $\phi$ is orthogonal to the union of the ranges of all $T_\theta$ and (by linearity and continuity) to $\mathbf{R}(\mathcal{T})$. This implies that $\mathbf{R}(\mathcal{T})^\perp\supset\mathbf{V}(\mathcal{T}^*)$.
Now for the opposite inclusion, suppose that $\phi\in\mathbf{R}(\mathcal{T})^\perp$. Then $\langle T_\theta x,\phi\rangle=0$ for any $x\in E$ and any $\theta\in\Theta$. But by $\langle T_\theta x,\phi\rangle=\langle x,T_\theta^*(\phi)\rangle$, we see that $T_\theta^*(\phi)=0$ for all $\theta\in\Theta$ and so $\phi\in\mathbf{V}(\mathcal{T}^*)$. Thus it follows that $\mathbf{R}(\mathcal{T})^\perp\subset\mathbf{V}(\mathcal{T}^*)$ and in consequence we have $\mathbf{R}(\mathcal{T})^\perp=\mathbf{V}(\mathcal{T}^*)$.

Next we let $\phi\in E^*$ and $\theta\in\Theta$ be arbitrarily given and set $\psi=T_\theta^*(\phi)$. If $x\in\mathbf{V}(\mathcal{T})$ is arbitrary, then $\langle x,\psi\rangle=\langle x,T_\theta^*(\phi)\rangle=\langle T_\theta x,\phi\rangle=0$. Thus $\psi\perp\mathbf{V}(\mathcal{T})$. This means that $\mathbf{V}(\mathcal{T})^\perp\supset\mathbf{R}(\mathcal{T}^*)$ since $\mathbf{V}(\mathcal{T})^\perp$ is a closed linear manifold.
To get $\mathbf{V}(\mathcal{T})^\perp\subset\mathbf{R}(\mathcal{T}^*)$, we now assume $\mathbf{V}(\mathcal{T})^\perp\not\subset\mathbf{R}(\mathcal{T}^*)$ and then we can choose a functional $\phi\in\mathbf{V}(\mathcal{T})^\perp$ in $E^*$ satisfying $\phi\not\in\mathbf{R}(\mathcal{T}^*)$. Then there exists a functional in $E^{**}$ which is $0$ restricted to $\mathbf{R}(\mathcal{T}^*)$ and is $1$ on $\phi$ by the Hahn-Banach theorem. Therefore by the reflexivity of $E$, we have the existence of a vector $\x\in E$ such that $\x\perp\mathbf{R}(\mathcal{T}^*)$ and $\phi(\x)=1$. Since $\x\perp\mathbf{R}(\mathcal{T}^*)$, hence $\langle\x,T_\theta^*(\psi)\rangle=0$ for each $\psi\in E^*$ and any $\theta\in\Theta$. Hence $\langle T_\theta\x,\psi\rangle=0$ for any $\psi\in E^*$ and any $\theta\in\Theta$. This means that $\x\in\mathbf{V}(\mathcal{T})$ and since $\phi\in\mathbf{V}(\mathcal{T})^\perp$, we get $\phi(\x)=0$. This contradicts $\phi(\x)=1$. Whence the hypothesis that $\mathbf{V}(\mathcal{T})^\perp\not\subset\mathbf{R}(\mathcal{T}^*)$ is false. Thus $\mathbf{V}(\mathcal{T})^\perp\subset\mathbf{R}(\mathcal{T}^*)$ and hence $\mathbf{V}(\mathcal{T})^\perp=\mathbf{R}(\mathcal{T}^*)$.

The proof of Theorem~\ref{thm1.2} is therefore completed.
\end{proof}

\begin{remark}\label{rem1.3}
The spaces $\mathbf{V}(\mathcal{T}), \mathbf{R}(\mathcal{T}), \mathbf{V}(\mathcal{T}^*)$, and $\mathbf{R}(\mathcal{T}^*)$ all are defined by $\mathcal{T}$ independently of any F{\o}lner net in $G$. This is a crucial point in the proof of Theorem~\ref{thm0.4} in $\S\ref{sec2}$.
\end{remark}

We note that the weak compactness of a bounded set of $E$ plays no a role in the above proof of Theorem~\ref{thm1.2}.

\subsection{Topologies for linear transformations}\label{sec1.4}
To better understand our mean ergodic theorems, it is necessary to know the various topologies on the Banach space $\mathscr{L}(E)$.
\begin{enumerate}
\item The \textbf{\textit{uniform topology}} in $\mathscr{L}(E)$ is the topology defined by the usual operator norm.
\item Let $T,T_1,T_2,\dotsc$ belong to $\mathscr{L}(E)$. Suppose that for every $x\in E$, $\|Tx-T_nx\|_E\to0$ as $n\to\infty$. Then $\{T_n\}$ is said to converge \textbf{\textit{strongly}} to $T$.
\item Let $T,T_1,T_2,\dotsc$ belong to $\mathscr{L}(E)$. Suppose for every vector $x\in E$ and for any functional $\phi\in E^*$, $\langle Tx-T_nx,\phi\rangle\to0$ as $n\to\infty$. Then we say $\{T_n\}$ converges \textbf{\textit{weakly}} to $T$.
\end{enumerate}
Clearly, uniform convergence implies strong convergence; and strong convergence implies weak convergence.
Our mean ergodic theorems all are in the sense of the strong topology on $\mathscr{L}(E)$.
\section{The mean ergodic theorems of continuous operators}\label{sec2}
In this section, we will first prove an abstract mean ergodic theorem (Theorem~\ref{thm2.1} which implies Theorem~\ref{thm0.4} except $p=1$) using Theorem~\ref{thm1.2}. Secondly we shall prove the case $p=1$ of Theorem~\ref{thm0.4} provided $\mu(X)<\infty$.

Before proving, we first point out that our approaches introduced here are only valid for groups, for we need to involve the adjoint operators $T_{g^{-1}}^*$ in our procedure.

\subsection{Abstract mean ergodic theorem}
Let $E$ be a Banach space, and let $G$ be an amenable group with any fixed \textit{left} F{\o}lner net $\{F_\theta, \theta\in\Theta\}$ in it as in $\S\ref{sec0.1}$. We can now generalize Lorch's mean ergodic theorem (\cite[Theorem~9.1]{Lor}) as follows.

\begin{Thm}\label{thm2.1}
Let $\{T_g\colon E\rightarrow E\}_{g\in G}$ be a family of continuous linear operators of a reflexive Banach space $E$ to itself, which is such that:
\begin{enumerate}
\item[$(1)$] $T_gT_h=T_{hg}\ \forall g,h\in G$,
\item[$(2)$] for $x\in E$ and $y^*\in E^*$, $g\mapsto\langle T_gx,y^*\rangle$ is Borel measurable on $G$, and
\item[$(3)$] $\{T_g\}_{g\in G}$ is $m_G$-almost surely uniformly bounded; i.e., $\|T_g\|\le\beta$ for $m_G$-a.e. $g\in G$.
\end{enumerate}
Set
$$\F=\{x\in E\,|\,T_gx=x\ \forall g\in G\}\quad \textrm{and} \quad\N=\overline{\{x-T_gx\,|\,x\in E, g\in G\}}.$$
Then
\begin{gather*}
E=\F\oplus\N \intertext{and}
\frac{1}{|F_\theta|}\int_{F_\theta}T_gxdg\to P(x)\quad \forall x\in E.
\end{gather*}
Here
\begin{gather*}
P\colon \F\oplus\N\rightarrow\F
\end{gather*}
is the projection, $\|P\|\le\beta$; i.e., $A(F_\theta,\centerdot)$ converges strongly to $P(\centerdot)$ in $\mathscr{L}(E)$.
\end{Thm}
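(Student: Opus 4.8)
The plan is to run the Eberlein--Lorch mean-ergodic argument, but with the usual weak-compactness input replaced by the duality identities of Theorem~\ref{thm1.2}. First I would set up the averaging operators. For $x\in E$ and a compactum $K\subseteq G$ of positive Haar measure, the scalar function $g\mapsto\langle T_gx,\phi\rangle$ is Borel by~(2) and, by~(3), bounded by $\beta\|x\|\,\|\phi\|$ off an $m_G$-null set, hence integrable on $K$; thus $\phi\mapsto\frac1{|K|}\int_K\langle T_gx,\phi\rangle\,dg$ is a bounded linear functional on $E^*$ of norm $\le\beta\|x\|$, and by reflexivity $E=E^{**}$ it is represented by a unique vector $A(K,x)\in E$. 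This yields $A(F_\theta,\centerdot)\in\mathscr{L}(E)$ with $\|A(F_\theta,\centerdot)\|\le\beta$ for every $\theta$; note this Gelfand integral is, apart from Theorem~\ref{thm1.2} itself, the only role of reflexivity, and weak compactness of the unit ball is never used. Setting $\mathcal{S}=(I-T_g)_{g\in G}$, in the notation of \S\ref{sec1} one has $\F=\mathbf{V}(\mathcal{S})$ and $\N=\mathbf{R}(\mathcal{S})=\overline{\mathrm{span}}\bigcup_{g}\mathrm{R}(I-T_g)$.

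Next I would record the two basic limit facts. On $\F$ one has $T_gx=x$ for all $g$, so $A(F_\theta,x)=x$. On a generator $x-T_hx$ of $\N$, the covariance $T_gT_h=T_{hg}$ and left-invariance of $m_G$ give $A(F_\theta,T_hx)=\frac1{|F_\theta|}\int_{F_\theta}T_{hg}x\,dg=\frac1{|F_\theta|}\int_{hF_\theta}T_gx\,dg$, whence $\|A(F_\theta,x-T_hx)\|\le\beta\|x\|\,\frac{|hF_\theta\vartriangle F_\theta|}{|F_\theta|}\to0$ by the left F{\o}lner property; by linearity and the uniform bound $\|A(F_\theta,\centerdot)\|\le\beta$ this propagates to all of $\N$. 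In particular $A(F_\theta,\centerdot)$ equals the identity on $\F$ and tends to $0$ on $\N$, so already $\F\cap\N=\{0\}$.

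The crux is density of $\F\oplus\N$ in $E$. If it fails, pick $\phi\in E^*\setminus\{0\}$ with $\phi\perp\F\oplus\N$. From $\phi\perp\N$ (it kills every $x-T_gx$) one gets $T_g^*\phi=\phi$ for all $g$; from $\phi\perp\F=\mathbf{V}(\mathcal{S})$ and the identity $\mathbf{V}(\mathcal{S})^\perp=\mathbf{R}(\mathcal{S}^*)$ of Theorem~\ref{thm1.2}, one gets $\phi\in\mathbf{R}(\mathcal{S}^*)=\overline{\mathrm{span}}\{\psi-T_g^*\psi:\psi\in E^*,g\in G\}$. Now I would repeat the previous step inside $E^*$; but the adjoints obey the \emph{co}variance $T_g^*T_h^*=T_{gh}^*$, so one must pass to $U_g:=T_{g^{-1}}^*$, which by Lemma~\ref{lem1.1} satisfies (I)--(III) on the reflexive space $E^*$ together with the needed contravariance $U_gU_h=U_{hg}$, and $\|U_g\|=\|T_{g^{-1}}\|\le\beta$ a.e.\ since inversion preserves Haar-null sets. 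As $g$ ranges over $G$ the sets $\{\psi-U_g\psi\}$ and $\{\psi-T_g^*\psi\}$ coincide, so $\mathbf{R}(\mathcal{S}^*)$ is precisely the ``$\N$'' of the family $(U_g)$, while $\phi$ — being fixed by every $U_g$ — lies in its ``$\F$''. Applying the two facts of the second paragraph to $(U_g)$ along the \emph{same} left F{\o}lner net $\{F_\theta\}$ gives simultaneously $\frac1{|F_\theta|}\int_{F_\theta}U_g\phi\,dg=\phi$ and $\frac1{|F_\theta|}\int_{F_\theta}U_g\phi\,dg\to0$, forcing $\phi=0$, a contradiction; hence $\F\oplus\N$ is dense. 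I expect this paragraph to be the main obstacle: left Haar measure pairs naturally with left translations, yet the semigroup law turns the adjoint averages into right translations, and the detour through $T_{g^{-1}}^*$ (Lemma~\ref{lem1.1}) — together with checking that a left F{\o}lner net still suffices and keeping track of which of the four identities of Theorem~\ref{thm1.2} is invoked — is exactly what rescues the argument.

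Finally I would conclude. Since $A(F_\theta,\centerdot)$ converges on the dense subspace $\F\oplus\N$ (to the identity on $\F$, to $0$ on $\N$) and $\sup_\theta\|A(F_\theta,\centerdot)\|\le\beta$, a routine $\varepsilon/3$ estimate shows $(A(F_\theta,x))_\theta$ is a Cauchy net for every $x\in E$, so by completeness it converges, say to $Px$; then $P\in\mathscr{L}(E)$, $\|P\|\le\beta$, and $A(F_\theta,\centerdot)\to P$ strongly. Using the density, if $Px=0$ write $x=\lim(a_i+b_i)$ with $a_i\in\F$, $b_i\in\N$; then $a_i=P(a_i+b_i)\to Px=0$, hence $b_i\to x$, so $x\in\N$ and $\ker P=\N$; the same approximation gives $Px=\lim a_i\in\F$ since $\F$ is closed, so $\mathrm{R}(P)\subseteq\F$; and $P|_\F=\mathrm{id}$ forces $P^2=P$ on the dense set $\F\oplus\N$, hence everywhere. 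Thus every $x$ decomposes as $x=Px+(x-Px)$ with $Px\in\F$ and $x-Px\in\ker P=\N$, which with $\F\cap\N=\{0\}$ yields $E=\F\oplus\N$ and identifies $P$ as the asserted norm-$\le\beta$ projection. (The $p=1$ case of Theorem~\ref{thm0.4} is not reflexive and falls outside this argument; it is handled separately by restricting to $L^2\subseteq L^1$ — valid since $\mu(X)<\infty$ — and using the contractivity of $A(F_\theta,\centerdot)$ on $L^1$ together with density.)
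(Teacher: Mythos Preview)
Your proposal is correct and follows essentially the same route as the paper: show $A(F_\theta,\cdot)=\mathrm{id}$ on $\F$ and $A(F_\theta,\cdot)\to0$ on $\N$ via the left F{\o}lner property and the covariance $T_gT_h=T_{hg}$, deduce $\F\cap\N=\{0\}$, and then obtain $E=\F\oplus\N$ by passing to the adjoint family $U_g=T_{g^{-1}}^*$ (Lemma~\ref{lem1.1}) and invoking Theorem~\ref{thm1.2}. The only cosmetic differences are that the paper cites both identities $\F^\perp=\mathbf{R}(\mathcal{T}^*)$ and $\N^\perp=\mathbf{V}(\mathcal{T}^*)$ from Theorem~\ref{thm1.2} (you compute the second one directly), and that the paper concludes $E=\F\oplus\N$ straight from Hahn--Banach using the closedness of $\F\oplus\N$ implied by the bounded projection, whereas you first pass through density and an $\varepsilon/3$ extension before reconstructing the splitting from $\ker P=\N$, $\mathrm{R}(P)\subseteq\F$.
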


This type of mean ergodic theorem was first introduced by J.~von Neumann in 1932 \cite{von} for the case that $\{T_g\colon\mathscr{H}\rightarrow\mathscr{H}\}$ is generated by a single unitary operator $T$ of a Hilbert space $\mathscr{H}$ to itself, and then it was extended to more general spaces than Hilbert space. See, e.g., F.~Riesz in 1938~\cite{Rie} for the case $\|T\|\le1, E=L^p$ where $1<p<\infty$; E.~Lorch in 1939 \cite{Lor39} for the case where $E$ is a reflexive Banach space and $T$ is power bounded; and K.~Yosida and S.~Kakutani for weakly completely continuous $T$ of a Banach space $E$ to itself, respectively, in 1938~\cite{Yos} and \cite{Kak}; also see \cite[Theorem~9.13.1]{Edw}.

We notice here that condition (3) ensures that for any $x\in E$ and any compact subset $K$ of $G$ with $|K|>0$, the \textit{Pettis integral} of the vector-valued function $T_{\centerdot}x\colon g\mapsto T_gx$ over $K$
\begin{gather*}
A(K,x):=\frac{1}{|K|}\int_{K}T_gxdg,
\end{gather*}
is a well-defined vector in $E$ by
\begin{equation*}
\langle A(K,x),y^*\rangle=\frac{1}{|K|}\int_K\langle T_gx,y^*\rangle dg\quad \forall y^*\in E^*.
\end{equation*}
See, e.g., \cite[Proposition~8.14.11]{Edw} and \cite[Chap.~III]{HP}.  Thus $A(K,\centerdot)\colon E\rightarrow E$ makes sense for any compacta $K$ of $G$, which is linear with respect to $x\in E$ with
$\|A(K,\centerdot)\|\le\sup_{g\in K}\|T_g\|$.

\begin{proof}[Proof of Theorem~\ref{thm2.1}]
Let $\mathcal{T}=\{I-T_g, g\in G\}$ and then $\F=\mathbf{V}(\mathcal{T})$ and $\N=\mathbf{R}(\mathcal{T})$ which are independent of $\{F_\theta; \theta\in\Theta\}$, where $I$ is the identity operator of $E$. Suppose first that $x\in E$ is in $\F$; thus $T_gx=x$ for all $g\in G$ and then
$$\frac{1}{|F_\theta|}\int_{F_\theta}T_gxdg=x\quad \forall \theta\in\Theta.$$
Thus restricted to $\F$, we have
$$
\lim_{\theta\in\Theta}\frac{1}{|F_\theta|}\int_{F_\theta}T_gxdg=x.
$$
Now let $y\in\N$ be arbitrary. This means that for any $\epsilon>0$, there are finite number of elements, say $g_1,\dotsc,g_\ell\in G$, and vectors $z_1,\dotsc,z_\ell, z\in E$ such that
$$
y=(z_1-T_{g_1}z_1)+\dotsm+(z_\ell-T_{g_\ell}z_\ell)+z,\quad\|z\|_E<\epsilon.
$$
By the asymptotical invariance of the F{\o}lner net $\{F_\theta; \theta\in\Theta\}$ and the a.s. uniform boundedness of $\{T_g\}$, an easy calculation shows that
\begin{gather*}
\limsup_{\theta\in\Theta}\left\|\frac{1}{|F_\theta|}\int_{F_\theta}T_gydg\right\|_E\le\beta\epsilon.
\end{gather*}
It is now clear that $y\in\N$ implies that
$$\frac{1}{|F_\theta|}\int_{F_\theta}T_gydg\to0$$
since $\epsilon$ is arbitrary.

Meanwhile the above argument shows that $\F\cap\N=\{0\}$ and so $\F+\N=\F\oplus\N$. Suppose that $x\in\F$ and $y\in\N$. Then
\begin{gather*}
\frac{1}{|F_\theta|}\int_{F_\theta}T_g(x+y)dg\to x\quad \textrm{and} \quad \|x\|\le\beta\|x+y\|.
\end{gather*}
We have therefore shown that
$A(F_\theta,\centerdot)$, restricted to vectors in $\F\oplus\N$, converges to the projection $P$ of $\F\oplus\N$ onto $\F$ for which $\|P\|\le\beta$.

To prove Theorem~\ref{thm2.1}, it remains to show that $E=\F\oplus\N$. By similar arguments, according to Lemma~\ref{lem1.1} we see that for the adjoint operator group
\begin{gather*}
\left\{T_{g^{-1}}^*\colon E^*\rightarrow E^*\right\}_{g\in G},
\end{gather*}
which is such that
\begin{gather*}
T_{g^{-1}}^*T_{h^{-1}}^*=T_{(hg)^{-1}}^*\quad\textrm{and}\quad (T_g^*)^{-1}=T_{g^{-1}}^*\quad \forall g,h\in G,
\end{gather*}
if we restrict to the manifold $\F(\mathcal{T}^*)\oplus\N(\mathcal{T}^*)$ that is associated to
\begin{gather*}
\mathcal{T}^*=\left\{I^*-T_{g^{-1}}^*\right\}_{g\in G}=\left\{I^*-{T_{g}^*}^{-1}\right\}_{g\in G},
\end{gather*}
there follows that $\frac{1}{|F_\theta|}\int_{F_\theta}T_{g^{-1}}^*(\centerdot)dg$ converges to the projection of $\F(\mathcal{T}^*)\oplus\N(\mathcal{T}^*)$ onto $\F(\mathcal{T}^*)$.
In particular, $\F(\mathcal{T}^*)\cap\N(\mathcal{T}^*)=\{0\}$.

Now to the contrary suppose that there is some $x\not=0$ in $E$ such that $x\not\in\F(\mathcal{T})\oplus\N(\mathcal{T})$. Then there exists some functional $\phi\in E^*$ such that $\phi\perp\F\oplus\N$ and $\phi(x)=1$ by the Hahn-Banach theorem; hence $\phi\not=0$. But $\phi\in\F^\perp=\N(\mathcal{T}^*)$ and $\phi\in\N^\perp=\F(\mathcal{T}^*)$ by virtue of Theorem~\ref{thm1.2} and Remark~\ref{rem1.3}. Hence $\phi=0$. This contradiction shows that $E=\F\oplus\N$.

The proof of Theorem~\ref{thm2.1} is thus completed.
\end{proof}

We note here that in Theorem~\ref{thm2.1} the projection $P$ is independent of the F{\o}lner net $\{F_\theta; \theta\in\Theta\}$ in $G$. This thus completes the proof of our Theorem~\ref{thm0.4} in the case $1<p<\infty$ since $L^p(X,\mathscr{X},\mu)$, for $1<p<\infty$, are reflexive Banach spaces.

\subsection{Proof of Theorem~\ref{thm0.4}}\label{sec2.2}
We need only say some words for the case $p=1$ under the condition $\mu(X)<\infty$. In fact, let
\begin{gather*}
\F_\mu^1=\{\phi\in L^1(X,\mathscr{X},\mu)\,|\,\phi=T_g\phi\ \forall g\in G\}\quad \textrm{and}\quad\N_\mu^1=\overline{\{\psi-T_g\psi\,|\,\psi\in L^1(X,\mathscr{X},\mu), g\in G\}}.
\end{gather*}
Then we can easily see that $\F_\mu^1\cap\N_\mu^1=\{0\}$ and that for any $\phi=\phi_1+\phi_2\in\F_\mu^1\oplus\N_\mu^1$ with $\phi_1\in\F_\mu^1$ and $\phi_2\in\N_\mu^1$ there follows $A(F_\theta,\phi)\to\phi_1$ with $\|\phi_1\|_1\le\|\phi\|_1$ and $A(F_\theta,\phi_2)\to0$ in $L^1$-norm.
Now let $\phi\in L^1(X,\mathscr{X},\mu)$ be arbitrary; and then $\exists\psi_n\in L^2(X,\mathscr{X},\mu)$ with $\psi_n\to\phi$ in $L^1$-norm. Since $\psi_n=P(\psi_n)+(\psi_n-P(\psi_n))$ where $\psi_n-P(\psi_n)\in\N_\mu^1$ and
$\|P(\psi_m)-P(\psi_n)\|_1\le\|\psi_m-\psi_n\|_1$ for all $m,n\ge1$,
it follows that $\varphi:=L^1\textrm{-}\lim_{n\to\infty}P(\psi_n)\in\F_\mu^1$ and $\phi-\varphi\in\N_\mu^1$. Thus,
$$L^1(X,\mathscr{X},\mu)=\F_\mu^1\oplus\N_\mu^1.$$
This hence completes the proof of the case $p=1$ of Theorem~\ref{thm0.4} whenever $\mu(X)<\infty$.

\subsection{A mean ergodic theorem for product amenable groups}

Let $G_1,\dotsc,G_l$ be $l$ $\sigma$-compact amenable groups which respectively have fixed left Haar measures $m_1,\dotsc,m_l$, where $l\ge2$ is an integer. By $m$ we denote the usual $l$-fold product measure $m_1\otimes\dotsm\otimes m_l$ on the $l$-fold product space $G_1\times\dotsm\times G_l$.
Let $\{T_{g_i}\}_{g_i\in G_i}$, $1\le i\le l$, be families of continuous linear operators of a same reflexive Banach space $E$ to itself satisfying conditions (1), (2) and (3) of Theorem~\ref{thm2.1}.

Then by Theorem~\ref{thm2.1}, for any F{\o}lner sequences $\left(F_n^{(i)}\right)_{n=1}^\infty$ in $G_i, 1\le i\le l$, we can obtain in the sense of the strong operator topology that
\begin{equation*}
P_i(\centerdot)=\lim_{n\to\infty}\frac{1}{m_i\left(F_n^{(i)}\right)}\int_{F_n^{(i)}}T_{g_i}(\centerdot)dg_i.
\end{equation*}
Under the above situation, the following result generalizes \cite{NW}.

\begin{Thm}\label{thm2.2}
There follows that
\begin{equation*}
P_1\dotsm P_l=\lim_{n\to\infty}\frac{1}{m\left(F_n^{(1)}\times\dotsm\times F_n^{(l)}\right)}\int_{F_n^{(1)}\times\dotsm\times F_n^{(l)}}T_{g_1}\dotsm T_{g_l}dg_1\dotsm dg_l.
\end{equation*}
\end{Thm}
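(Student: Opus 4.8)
The plan is to reduce the $l$-fold product integral to the operator composition $A_1(n)\dotsm A_l(n)$, where $A_i(n)=\frac{1}{m_i(F_n^{(i)})}\int_{F_n^{(i)}}T_{g_i}\,dg_i\in\mathscr{L}(E)$, by an iterated Pettis--Fubini computation, and then to let $n\to\infty$ using the standard fact that a product of strongly convergent, uniformly norm-bounded operator sequences converges strongly to the product of the limits. I would record at the outset that, by condition~$(3)$ of Theorem~\ref{thm2.1} together with the norm estimate for the Pettis averages stated just after it, $\|A_i(n)\|\le\beta$ for all $n$ and all $i$, and that $A_i(n)\to P_i$ strongly by Theorem~\ref{thm2.1}.

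\textit{Step 1 (Fubini reduction).} I would first show that for every $n$,
\begin{equation*}
\frac{1}{m\big(F_n^{(1)}\times\dotsm\times F_n^{(l)}\big)}\int_{F_n^{(1)}\times\dotsm\times F_n^{(l)}}T_{g_1}\dotsm T_{g_l}\,dg_1\dotsm dg_l=A_1(n)\,A_2(n)\dotsm A_l(n).
\end{equation*}
Since each $m_i$ is $\sigma$-finite ($G_i$ being $\sigma$-compact), Fubini's theorem lets one compute the product integral of the scalar integrands $(g_1,\dotsc,g_l)\mapsto\langle T_{g_1}\dotsm T_{g_l}x,y^*\rangle$ as an iterated integral; integrating in $g_l$ first and pulling the fixed bounded operator $T_{g_1}\dotsm T_{g_{l-1}}$ out of the inner Pettis integral (legitimate since $\langle\int_KSf(g)\,dg,y^*\rangle=\langle\int_Kf(g)\,dg,S^*y^*\rangle$ for $S\in\mathscr{L}(E)$) yields $\int_{F_n^{(l)}}T_{g_1}\dotsm T_{g_l}x\,dg_l=m_l(F_n^{(l)})\,T_{g_1}\dotsm T_{g_{l-1}}\big(A_l(n)x\big)$, and an induction on $l$ then gives the identity. (In the principal application the $T_{g_i}$ are Koopman operators of a Borel action, so the joint measurability needed for the product integral to make sense is automatic from joint measurability of the action map; in the abstract setting it is part of the standing hypotheses, and with only separate measurability the statement should be read with the iterated integral.)

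\textit{Step 2 (convergence of products).} Next I would record the elementary fact that if $S_n\to S$ strongly with $\sup_n\|S_n\|<\infty$ and $R_n\to R$ strongly in $\mathscr{L}(E)$, then $S_nR_n\to SR$ strongly: for each $x\in E$,
\begin{equation*}
S_nR_nx-SRx=S_n(R_nx-Rx)+(S_n-S)(Rx),
\end{equation*}
where $\|S_n(R_nx-Rx)\|\le(\sup_n\|S_n\|)\,\|R_nx-Rx\|\to0$ and $(S_n-S)(Rx)\to0$.

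\textit{Step 3 (induction on $l$).} Finally I would prove $A_1(n)\dotsm A_l(n)\to P_1\dotsm P_l$ strongly by induction on $l$: the case $l=1$ is Theorem~\ref{thm2.1}, and for the inductive step one applies Step~2 with $S_n=A_1(n)\dotsm A_{l-1}(n)$ (which converges strongly to $P_1\dotsm P_{l-1}$ by the inductive hypothesis and has $\|S_n\|\le\beta^{l-1}$) and $R_n=A_l(n)\to P_l$, obtaining $A_1(n)\dotsm A_l(n)\to P_1\dotsm P_l$ strongly. Combining this with Step~1 proves the theorem. The only genuinely delicate point is the bookkeeping in Step~1 — the Pettis--Fubini interchange and the joint measurability underlying the product integral; Steps~2 and~3 are routine functional analysis.
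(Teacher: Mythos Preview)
Your proof is correct and follows essentially the same route as the paper: a Fubini reduction of the product integral to the composition $A_1(n)\dotsm A_l(n)$, followed by passage to the limit and induction on $l$. You are simply more explicit than the paper about the product-of-strongly-convergent-operators lemma (Step~2) and the Pettis--Fubini bookkeeping, which the paper records tersely as $\int LT_{g_i}\,dg_i=L\int T_{g_i}\,dg_i$ and then passes to the limit without further comment.
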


\begin{proof}
First of all, we note that for any bounded linear operator $L\colon E\rightarrow E$,
\begin{gather*}
\int_{F_n^{(i)}}LT_{g_i}dg_i=L\int_{F_n^{(i)}}T_{g_i}dg_i\intertext{and}\int_{F_n^{(i)}}T_{g_i}Ldg_i=\left(\int_{F_n^{(i)}}T_{g_i}dg_i\right)L
\end{gather*}
for all $1\le i\le l$; and $m\left(F_n^{(1)}\times\dotsm\times F_n^{(l)}\right)=m_1\left(F_n^{(1)}\right)\dotsm m_l\left(F_n^{(1)}\right)$.
Let $l=2$. Then by Fubini's theorem,
\begin{equation*}\begin{split}
\lefteqn{\lim_{n\to\infty}\frac{1}{m\big{(}F_n^{(1)}\times F_n^{(2)}\big{)}}\int_{F_n^{(1)}\times F_n^{(2)}}T_{g_1}T_{g_2}dg_1dg_2}\\
&=\lim_{n\to\infty}\frac{1}{m_1\big{(}F_n^{(1)}\big{)}}\int_{F_n^{(1)}}\left[\frac{1}{m_2\big{(}F_n^{(2)}\big{)}}\int_{F_n^{(2)}}T_{g_1}T_{g_2}dg_2\right]dg_1\\
&=\lim_{n\to\infty}\frac{1}{m_1\big{(}F_n^{(1)}\big{)}}\int_{F_n^{(1)}}T_{g_1}\left[\frac{1}{m_2\big{(}F_n^{(2)}\big{)}}\int_{F_n^{(2)}}T_{g_2}dg_2\right]dg_1\\
&=\lim_{n\to\infty}\left[\frac{1}{m_1\big{(}F_n^{(1)}\big{)}}\int_{F_n^{(1)}}T_{g_1}dg_1\right]\left[\frac{1}{m_2\big{(}F_n^{(2)}\big{)}}\int_{F_n^{(2)}}T_{g_2}dg_2\right]\\
&=P_1P_2.
\end{split}\end{equation*}
By induction on $l$, we can easily prove the statement for any $2\le l<\infty$. This completes the proof of Theorem~\ref{thm2.2}.
\end{proof}

Whenever $G_1=\dotsm=G_l=\mathbb{Z}$, then this result reduces to an analogous case of \cite[Theorem~1]{NW}.

For a single continuous linear operator $T$ of $E$, Cox~\cite{Co} showed that if $\sup_n\|I-T^n\|<1$, then $T=I$. Similar to \cite[Theorem~4]{NW}, using Theorem~\ref{thm2.2} we can generalize and strengthen Cox's theorem as follows.

\begin{cor}\label{cor2.3}
Under the same situation of Theorem~\ref{thm2.2}, let
$\|I-P_1\dotsm P_l\|<1$;
then $T_g=I$ for all $g\in G_i, 1\le i\le l$.
\end{cor}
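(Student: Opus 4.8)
The plan is to combine the idempotency of each mean ergodic projection supplied by Theorem~\ref{thm2.1} with the invertibility of the product $Q:=P_1\dotsm P_l$ that is forced by the hypothesis $\|I-P_1\dotsm P_l\|<1$. First I would note that $R:=I-Q$ has $\|R\|<1$, so the Neumann series $\sum_{k\ge0}R^k$ converges in the Banach algebra $\mathscr{L}(E)$ and furnishes a two-sided inverse $Q^{-1}=\sum_{k\ge0}R^k$; hence $Q$ is invertible.

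Next I would strip the projections off one at a time from the right, using only $P_i^2=P_i$ (from Theorem~\ref{thm2.1}) together with the invertibility of $Q$. Since $P_l^2=P_l$, we get $QP_l=P_1\dotsm P_{l-1}P_l^2=P_1\dotsm P_l=Q$, and multiplying on the left by $Q^{-1}$ gives $P_l=I$; consequently $Q=P_1\dotsm P_{l-1}$. The same computation now applies with $l$ replaced by $l-1$, and by descending induction $P_i=I$ for every $1\le i\le l$. Finally, applying Theorem~\ref{thm2.1} to the family $\{T_{g_i}\}_{g_i\in G_i}$, the operator $P_i$ is the projection of $E=\F\oplus\N$ onto the fixed space $\{x\in E\,|\,T_{g_i}x=x\ \forall g_i\in G_i\}$ along the closed span of $\{x-T_{g_i}x\}$; hence $P_i=I$ forces $T_{g_i}x=x$ for all $x\in E$ and all $g_i\in G_i$, i.e.\ $T_{g_i}=I$ on $G_i$, which is the assertion.

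The main point to watch is that $Q=P_1\dotsm P_l$ is in general \emph{not} itself a projection, since the families $\{T_{g_i}\}$ for distinct indices $i$ are not assumed to commute; so one cannot shortcut the argument by saying ``an idempotent of norm $<1$ must vanish.'' It is exactly the invertibility of $Q$, together with the one-sided absorption relation $QP_l=Q$, that does the work. Everything else is routine bookkeeping, and the $\sigma$-compactness of the $G_i$ together with the hypotheses needed for Theorems~\ref{thm2.1} and \ref{thm2.2} are carried over unchanged.
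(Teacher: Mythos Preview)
Your proof is correct and follows essentially the same line as the paper's: both invert $Q=P_1\dotsm P_l$ via the Neumann series and then peel off factors from the right by induction. The only cosmetic difference is the peeling device---you use idempotency ($QP_l=Q\Rightarrow P_l=I$, hence $T_h=I$ on $G_l$), whereas the paper uses the kernel structure of $P_l$ directly ($(I-T_h)x\in\N$ gives $Q(I-T_h)=0$, hence $T_h=I$, hence $P_l=I$); these two steps are interchangeable.
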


\begin{proof}
Let $\|I-P_1\dotsm P_l\|=\alpha<1$. Then $P_1\dotsm P_l$ is invertible and $\|P_1\dotsm P_l\|\le(1-\alpha)^{-1}$. Since for any $h\in G_l$ and any vector $x\in E$ we have \begin{equation*}
\begin{split}
\|x-T_h(x)\|&=\|(I-T_h)(x)\|=\|(P_1\dotsm P_l)^{-1}(P_1\dotsm P_l)(I-T_h)(x)\|\\
&\le\frac{1}{1-\alpha}\|P_1\dotsm P_l(I-T_h)(x)\|,
\end{split}
\end{equation*}
the last term is equal to $0$ by Theorem~\ref{thm2.2}. So $T_h=I$ for each $h\in G_l$. Similarly, it follows $T_h=I$ for all $h\in G_i$ for $1\le i\le l-1$. This proves the corollary.
\end{proof}

\section{Pointwise ergodic theorem for amenable groups}\label{sec3}
This section will be devoted to proving our $L^\infty$-pointwise ergodic theorems (Theorems~\ref{thm0.5} and \ref{thm0.6} stated in $\S\ref{sec0.2}$) by using classical analysis and our $L^p$-mean ergodic theorem (Theorem~\ref{thm0.4}).
\subsection{Arzel\'{a}-Ascoli theorem}
Recall that a topological space $X$ is call a \textbf{\textit{k}-\textit{space}}, provided that if a subset $A$ of $X$ intersects each closed compact set in a closed set then $A$ is closed. The two most important examples of \textit{k}-spaces are given in the following.

\begin{Lem}[{\cite[Theorem~7.13]{Kel}}]\label{lem3.1}
If $X$ is a Hausdorff space which is either locally compact or satisfies the first axiom of countability, then $X$ is a \textit{k}-space.
\end{Lem}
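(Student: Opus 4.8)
The plan is to verify the defining property of a $k$-space directly in each of the two cases: I will show that any $A\subseteq X$ which meets every closed compact set in a closed set must satisfy $\overline{A}\subseteq A$, and hence is closed. So fix an arbitrary point $x\in\overline{A}$; the goal in both cases is to produce a single closed compact set $K$ that ``captures'' $x$ together with enough of $A$ that the hypothesis on $A\cap K$ forces $x\in A$. Throughout I will use that, since $X$ is Hausdorff, every compact subset is automatically closed, so the hypothesis on $A$ does apply to $A\cap K$ for any compact $K$.

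In the locally compact case, I would choose a compact neighbourhood $K$ of $x$, so that $x\in K^{\circ}\subseteq K$. Then I claim $x\in\overline{A\cap K}$: given any open neighbourhood $U$ of $x$, the set $U\cap K^{\circ}$ is again an open neighbourhood of $x$, hence meets $A$ (as $x\in\overline{A}$), and any point of $U\cap K^{\circ}\cap A$ lies in $U\cap(A\cap K)$; thus every neighbourhood of $x$ meets $A\cap K$. Since $K$ is compact, hence closed, $A\cap K$ is closed by hypothesis, so $\overline{A\cap K}=A\cap K$, and therefore $x\in A\cap K\subseteq A$.

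In the first countable case, I would instead argue with sequences: pick a sequence $(a_{n})$ in $A$ with $a_{n}\to x$ and set $K=\{a_{n}:n\in\mathbb{N}\}\cup\{x\}$. A routine argument shows $K$ is compact (any open set containing $x$ omits only finitely many $a_{n}$, and the rest are covered by finitely many members of a given cover), and it is closed because $X$ is Hausdorff. Hence $A\cap K$ is closed, and since it contains every $a_{n}$ while $a_{n}\to x$, it contains $x$; thus $x\in A$. In either case $\overline{A}\subseteq A$, so $A$ is closed and $X$ is a $k$-space.

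There is essentially no serious obstacle here; the only points requiring a little care are (i) in the locally compact case, shrinking the test neighbourhood of $x$ inside $K^{\circ}$ so that it meets $A\cap K$ and not merely $A$, and (ii) noting that Hausdorffness is precisely what guarantees that the auxiliary compact set $K$ is closed, so that the hypothesis on $A$ — which is stated for closed compact sets — is legitimately applicable.
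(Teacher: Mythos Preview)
Your proof is correct and is the standard argument for this classical fact. Note, however, that the paper does not supply its own proof of this lemma: it is simply quoted from Kelley \cite[Theorem~7.13]{Kel} as a known result, so there is nothing in the paper itself to compare against; your argument is essentially the one found in Kelley.
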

Thus each metric space is a \textit{k}-space like $L^\infty(X,\mathscr{X})$ with the uniform norm $\|\cdot\|_\infty$ over any measurable space $(X,\mathscr{X})$.

A \textbf{\textit{uniformity}} for a set $X$ is a non-void family $\mathscr{U}$ of subsets of $X\times X$ such that: (a) each member of $\mathscr{U}$ contains the diagonal $\Delta(X)$; (b) if $U\in\mathscr{U}$, then $U^{-1}=\{(y,x)\,|\,(x,y)\in U\}\in\mathscr{U}$; (c) if $U\in\mathscr{U}$, then $V\circ V\subseteq U$ for some $V$ in $\mathscr{U}$; (d) if $U$ and $V$ are members of $\mathscr{U}$, then $U\cap V\in\mathscr{U}$; and (e) if $U\in\mathscr{U}$ and $U\subset V\subset X\times X$, then $V\in\mathscr{U}$. The pair $(X,\mathscr{U})$ is called a \textbf{\textit{uniform space}}.

If $(X,\mathscr{U})$ is a uniform space, then the \textbf{\textit{uniform topology}} $\mathfrak{T}$ of $X$ associated to $\mathscr{U}$ is the family of all subsets $T$ of $X$ such that for each $x\in T$ there is $U\in\mathscr{U}$ with $U[x]\subset T$. In this case, $X$ is called a \textbf{\textit{uniform topological space}}.

\begin{Lem}[{\cite[Theorem~6.10]{Kel}}]\label{lem3.2}
Let $X_\alpha, \alpha\in A$, be a family of uniform topological space; then $\prod_{\alpha\in A}X_\alpha$, with the product topology (in other words, the pointwise topology), is a uniform topological space.
\end{Lem}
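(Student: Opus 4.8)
The statement asks us to prove that the product topology on $X=\prod_{\alpha\in A}X_\alpha$ is \emph{induced by} some uniformity, so the plan is to write down the natural candidate --- the \emph{product uniformity} --- and then check (i) that it satisfies the five axioms (a)--(e) of a uniformity and (ii) that its associated uniform topology is exactly the product topology. Write $\pi_\alpha\colon X\to X_\alpha$ for the coordinate projection and $\tilde\pi_\alpha\colon X\times X\to X_\alpha\times X_\alpha$, $(x,y)\mapsto(\pi_\alpha x,\pi_\alpha y)$; let $\mathscr U_\alpha$ be the uniformity of $X_\alpha$. I would declare $\mathscr S=\{\tilde\pi_\alpha^{-1}[U]\colon\alpha\in A,\ U\in\mathscr U_\alpha\}$ to be a subbase and let $\mathscr U$ consist of all subsets of $X\times X$ that contain a \emph{finite} intersection of members of $\mathscr S$.

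First I would verify the axioms for $\mathscr U$. Axioms (d) and (e) hold by construction, $\mathscr U$ being the filter generated by $\mathscr S$. For (a): each $U\in\mathscr U_\alpha$ contains $\Delta(X_\alpha)$, hence each $\tilde\pi_\alpha^{-1}[U]$, and hence each finite intersection of such, contains $\Delta(X)$. For (b): $\big(\tilde\pi_\alpha^{-1}[U]\big)^{-1}=\tilde\pi_\alpha^{-1}[U^{-1}]$ with $U^{-1}\in\mathscr U_\alpha$, and $\big(\bigcap_iV_i\big)^{-1}=\bigcap_iV_i^{-1}$, so $\mathscr U$ is closed under $V\mapsto V^{-1}$. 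The only axiom needing genuine care is (c): given a basic entourage $W=\bigcap_{i=1}^n\tilde\pi_{\alpha_i}^{-1}[U_i]$, pick for each $i$ a $V_i\in\mathscr U_{\alpha_i}$ with $V_i\circ V_i\subseteq U_i$ and set $V=\bigcap_{i=1}^n\tilde\pi_{\alpha_i}^{-1}[V_i]\in\mathscr U$; a short chase on coordinates gives $V\circ V\subseteq\bigcap_i\tilde\pi_{\alpha_i}^{-1}[U_i]=W$, and the same $V$ works for any entourage containing $W$.

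Next I would identify the uniform topology $\mathfrak T$ of $\mathscr U$ with the product topology $\mathfrak{T}_{\mathrm{p}\textrm{-}\mathrm{c}}$ by comparing neighbourhood filters at an arbitrary $x\in X$. For a subbasic entourage $U=\tilde\pi_\alpha^{-1}[W]$ one has $U[x]=\pi_\alpha^{-1}\big[W[\pi_\alpha x]\big]$, and since $W[\pi_\alpha x]$ is a neighbourhood of $\pi_\alpha x$ in $X_\alpha$ this is a $\mathfrak{T}_{\mathrm{p}\textrm{-}\mathrm{c}}$-neighbourhood of $x$; taking finite intersections, every $\mathfrak T$-neighbourhood of $x$ is a $\mathfrak{T}_{\mathrm{p}\textrm{-}\mathrm{c}}$-neighbourhood, so $\mathfrak T\subseteq\mathfrak{T}_{\mathrm{p}\textrm{-}\mathrm{c}}$. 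Conversely a basic $\mathfrak{T}_{\mathrm{p}\textrm{-}\mathrm{c}}$-neighbourhood of $x$ has the form $\bigcap_{i=1}^n\pi_{\alpha_i}^{-1}[O_i]$ with each $O_i$ an open neighbourhood of $\pi_{\alpha_i}x$; choosing $W_i\in\mathscr U_{\alpha_i}$ with $W_i[\pi_{\alpha_i}x]\subseteq O_i$ and putting $V=\bigcap_i\tilde\pi_{\alpha_i}^{-1}[W_i]\in\mathscr U$ yields $V[x]\subseteq\bigcap_i\pi_{\alpha_i}^{-1}[O_i]$, so $\mathfrak{T}_{\mathrm{p}\textrm{-}\mathrm{c}}\subseteq\mathfrak T$. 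Hence $\mathfrak T=\mathfrak{T}_{\mathrm{p}\textrm{-}\mathrm{c}}$ and $(X,\mathfrak{T}_{\mathrm{p}\textrm{-}\mathrm{c}})$ is a uniform topological space.

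The main obstacle --- and it is a mild one --- is the bookkeeping around axiom (c): one must remember that a member of $\mathscr U$ constrains only \emph{finitely many} coordinates, which is precisely what keeps the common ``square root'' $V$ inside $\mathscr U$; intersecting over all of $A$ would destroy this. With that point handled, the rest is an unwinding of definitions; indeed this is exactly the construction of the product uniformity in Kelley \cite[Theorem~6.10]{Kel}, which one may simply invoke here.
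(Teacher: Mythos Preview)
Your proof is correct and is precisely the standard construction of the product uniformity; the paper, however, does not prove this lemma at all but simply quotes it from Kelley \cite[Theorem~6.10]{Kel}. So there is nothing to compare beyond noting that what you have written is essentially the argument one finds in that reference.
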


Thus, the product space $\mathbb{R}^X$ with the topology $\mathfrak{T}_{\textrm{p-c}}$ of pointwise convergence is a uniform topological space.

Now it is time to formulate our important tool---the classical Arzel\'{a}-Ascoli theorem (cf., e.g.,~\cite[Theorem~7.18]{Kel}).
\begin{AA-I}
Let $C(X,Y)$ be the family of all continuous functions on a \textit{k}-space $X$ which is either Hausdorff or regular to a Hausdorff uniform topological space $Y$, and let $C(X,Y)$ be equipped with the compact-open topology. Then a subfamily $F$ of $C(X,Y)$ is compact if and only if
\begin{enumerate}
\item[$(a)$] $F$ is closed in $C(X,Y)$,
\item[$(b)$] $F$ is equicontinuous on every compact subset of $X$, and
\item[$(c)$] $F[x]=\{f(x)\,|\, f\in F\}$ has compact closure in $Y$ for each $x\in X$.
\end{enumerate}
\end{AA-I}


\subsection{$L^\infty$-Pointwise convergence everywhere}\label{sec3.2}
Let $(X,\mathscr{X})$ be a measurable space and $G$ an amenable group with the identity $e$. We now consider a Borel dynamical system, i.e., $G$ acts Borel measurably on $X$,
$$
T\colon G\times X\rightarrow X\quad\textrm{ or write }\quad G\curvearrowright_TX.
$$
That is to say, the action map $T$ satisfies the following conditions:
\begin{enumerate}
\item[(1)] $T\colon (g,x)\mapsto T(g,x)$ is jointly measurable; and
\item[(2)] $T_ex=x$ and $T_{g_1}T_{g_2}x=T_{g_1g_2}x$ for all $x\in X$ and $g_1,g_2\in G$.
\end{enumerate}
In such case, $(X,\mathscr{X})$ is called a \textit{\textbf{Borel $G$-space}}. If $X$ is a topological space with $\mathscr{X}=\mathscr{B}_X$ and $T(g,x)$ is jointly continuous, then $X$ is called a \textit{\textbf{topological $G$-space}}.

By $\mathscr{K}_G$ we denote the family of all compacta of $G$. For any $K\in\mathscr{K}_G$, define the continuous linear operator
\begin{gather*}
A(K,\centerdot)\colon L^\infty(X,\mathscr{X})\rightarrow L^\infty(X,\mathscr{X}),
\end{gather*}
which is called the \textit{\textbf{ergodic average} over $K$}, by
\begin{gather*}
A(K,\varphi)(x)=\frac{1}{|K|}\int_KT_g\varphi(x)dg\quad \forall \varphi\in L^\infty(X,\mathscr{X}).
\end{gather*}
Clearly, the operator norms $\|A(K,\centerdot)\|\le 1$ and $\|T_g\|=1$.

Next we will simply write $(E,\|\cdot\|_E)=(L^\infty(X,\mathscr{X}),\|\cdot\|_\infty)$, which is a Banach space, and let $Y=\mathbb{R}^X$ endowed with the topology of pointwise convergence (i.e. the product topology). Then $E$ is a \textit{k}-space by Lemma~\ref{lem3.1} and $Y$ is a Hausdorff uniform topological space by Lemma~\ref{lem3.2}.
In fact, $Y$ is a vector space by $\gamma(r_x)_{x\in X}+(r_x^\prime)_{x\in X}=(\gamma r_x+r_x^\prime)_{x\in X}$ for $\gamma\in\mathbb{R}$ and $r,r^\prime\in Y$.

Now, for any $K\in\mathscr{K}_G$, we can define the function
$\mathcal{A}(K,\centerdot)\colon E\rightarrow Y$
by the following way:
\begin{gather}
E\ni\varphi\mapsto\mathcal{A}(K,\varphi)=\left(A(K,\varphi)(x)\right)_{x\in X}\in Y.
\end{gather}
Since for any $\varphi,\psi\in E$ and $x\in X$ there follows
\begin{equation}\label{eq3.2}
|A(K,\varphi)(x)-A(K,\psi)(x)|\le\|\varphi-\psi\|_E,
\end{equation}
we can see that $\mathcal{A}(K,\centerdot)$ is continuous on $E$ valued in $Y$ and
$\{\mathcal{A}(K,\centerdot), K\in\mathscr{K}_G\}$ is an equicontinuous subfamily of $C(X,Y)$, noting that here $C(X,Y)$ is equipped with the compact-open topology.

Under these facts we can obtain the following $L^\infty$-pointwise convergence theorem, which is completely independent of ergodic theory of $G\curvearrowright_TX$.

\begin{Thm}\label{thm3.3}
Given any F{\o}lner net $\{F_{\theta^\prime}^\prime; \theta^\prime\in\Theta^\prime\}$ in $G$ and any Borel $G$-space $X$, one can find a F{\o}lner subnet $\{F_\theta; \theta\in\Theta\}$ of $\{F_{\theta^\prime}^\prime; \theta^\prime\in\Theta^\prime\}$ in $G$ and a continuous linear function $\mathcal{A}(\centerdot)\colon E\rightarrow Y$ such that in the sense of Moore-Smith limit
\begin{gather*}
\lim_{\theta\in\Theta}\mathcal{A}(F_{\theta},\centerdot)=\mathcal{A}(\centerdot)\quad \textrm{in }C(E,Y)
\intertext{with the property: for each $\varphi\in E$,}
\lim_{\theta\in\Theta}A(F_{\theta},\varphi)(x)=\mathcal{A}(\varphi)(x)\ \forall x\in X\quad \textrm{and}\quad \mathcal{A}(\varphi)=\mathcal{A}(T_g\varphi)\ \forall g\in G.
\end{gather*}
Moreover, if $\varphi_n\to\psi$ as $n\to\infty$ in $(E,\|\cdot\|_E)$, then $\mathcal{A}(\varphi_n)(x)\to\mathcal{A}(\psi)(x)$ in $\mathbb{R}$ for all $x\in X$.
\end{Thm}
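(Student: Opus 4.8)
The plan is to apply the Arzel\'a--Ascoli theorem to the family $\{\mathcal{A}(K,\centerdot); K\in\mathscr{K}_G\}$ inside $C(E,Y)$ equipped with the compact-open topology, extract a convergent subnet indexed along the given F{\o}lner net, and then verify the two claimed invariance/pointwise properties separately. First I would record the three hypotheses of Arzel\'a--Ascoli in our setting: $E=(L^\infty(X,\mathscr{X}),\|\cdot\|_\infty)$ is a metric space, hence a $k$-space by Lemma~\ref{lem3.1}; $Y=\mathbb{R}^X$ with $\mathfrak{T}_{\mathrm{p}\text{-}\mathrm{c}}$ is a Hausdorff uniform topological space by Lemma~\ref{lem3.2}; the family $\{\mathcal{A}(K,\centerdot)\}$ is equicontinuous on $E$ (in fact globally, by the estimate \eqref{eq3.2}, so a fortiori on every compact subset of $E$); and for each fixed $\varphi\in E$ the orbit $\{\mathcal{A}(K,\varphi); K\in\mathscr{K}_G\}$ lies in the compact box $\prod_{x\in X}[-\|\varphi\|_\infty,\|\varphi\|_\infty]\subset Y$, so its closure in $Y$ is compact. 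Therefore the closure $\mathscr{F}$ of $\{\mathcal{A}(K,\centerdot); K\in\mathscr{K}_G\}$ in $C(E,Y)$ is compact.

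Next I would use compactness of $\mathscr{F}$ to extract a subnet. Given the F{\o}lner net $\{F_{\theta'}'; \theta'\in\Theta'\}$, consider the net $\theta'\mapsto\mathcal{A}(F_{\theta'}',\centerdot)$ in $\mathscr{F}$. By compactness of $\mathscr{F}$ (and the fact that in a compact space every net has a convergent subnet, cf.~\cite{Kel}), there is a subnet $\{F_\theta;\theta\in\Theta\}$ of $\{F_{\theta'}';\theta'\in\Theta'\}$ such that $\mathcal{A}(F_\theta,\centerdot)\to\mathcal{A}(\centerdot)$ in $C(E,Y)$ for some limit function $\mathcal{A}(\centerdot)\in\mathscr{F}$. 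Since a subnet of a F{\o}lner net is again a F{\o}lner net (noted in $\S\ref{sec0.1}$), $\{F_\theta;\theta\in\Theta\}$ is still a F{\o}lner net in $G$. Convergence in the compact-open topology of $C(E,Y)$ is in particular convergence in the topology of pointwise convergence on $E$ valued in $Y$, and $Y$-convergence is coordinatewise; hence for each $\varphi\in E$ and each $x\in X$ we get $A(F_\theta,\varphi)(x)=\mathcal{A}(F_\theta,\varphi)(x)\to\mathcal{A}(\varphi)(x)$. Linearity of $\mathcal{A}(\centerdot)$ is inherited as a pointwise (hence $Y$-) limit of the linear maps $\mathcal{A}(F_\theta,\centerdot)$, and continuity of $\mathcal{A}(\centerdot)$ on $E$ holds because $\mathcal{A}(\centerdot)\in C(E,Y)$; the last sentence of the theorem, that $\varphi_n\to\psi$ in $(E,\|\cdot\|_E)$ forces $\mathcal{A}(\varphi_n)(x)\to\mathcal{A}(\psi)(x)$, is then just this continuity read coordinatewise, or more directly the passage to the limit in \eqref{eq3.2}, which shows $|\mathcal{A}(\varphi)(x)-\mathcal{A}(\psi)(x)|\le\|\varphi-\psi\|_E$ for all $x$.

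It remains to prove the translation invariance $\mathcal{A}(\varphi)=\mathcal{A}(T_g\varphi)$ for every $g\in G$. Here the F{\o}lner property enters: for fixed $g\in G$, $\varphi\in E$, and $x\in X$ one compares
$A(F_\theta,T_g\varphi)(x)=\frac{1}{|F_\theta|}\int_{F_\theta}\varphi(T_{hg}x)\,dh$
with
$A(F_\theta,\varphi)(x)=\frac{1}{|F_\theta|}\int_{F_\theta}\varphi(T_{h}x)\,dh$,
and the difference is bounded by $\frac{2\|\varphi\|_\infty}{|F_\theta|}\,|F_\theta g\vartriangle F_\theta|$ after the substitution $h\mapsto hg$ (using right-invariance of $dh$ on the integrand indices; one must be slightly careful whether the left F{\o}lner condition $|gF_\theta\vartriangle F_\theta|/|F_\theta|\to0$ is being used together with the modular function, or whether one rewrites things so that only $|F_\theta\vartriangle F_\theta g^{-1}|$ appears — this is exactly the kind of bookkeeping the author will have pinned down). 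Since $\{F_\theta\}$ is a F{\o}lner net this difference tends to $0$, so the two nets $A(F_\theta,T_g\varphi)(x)$ and $A(F_\theta,\varphi)(x)$ have the same Moore--Smith limit, giving $\mathcal{A}(T_g\varphi)(x)=\mathcal{A}(\varphi)(x)$ for all $x$, i.e. $\mathcal{A}(T_g\varphi)=\mathcal{A}(\varphi)$ in $Y$. The main obstacle I anticipate is not the Arzel\'a--Ascoli step, which is essentially formal once the hypotheses are checked, but rather making the subnet honestly \emph{independent} of which direction we approach from in $\Theta'$ while keeping the F{\o}lner property, and getting the left/right Haar-measure and modular-function accounting in the invariance estimate exactly right — since $m_G$ is only a left Haar measure and the action satisfies $T_{h}T_g=T_{gh}$, the natural substitution produces right translates $F_\theta g$, and one must confirm that a \emph{left} F{\o}lner net also controls $|F_\theta g\vartriangle F_\theta|/|F_\theta|$ (or reorganize the argument so that only genuinely left translates appear).
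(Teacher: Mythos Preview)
Your proposal is correct and follows essentially the same route as the paper: verify the Arzel\'a--Ascoli hypotheses via the Lipschitz estimate \eqref{eq3.2} and the box $[-\|\varphi\|_\infty,\|\varphi\|_\infty]^X$, extract a convergent subnet from the compact closure, and read off linearity, continuity and the invariance $\mathcal{A}(\varphi)=\mathcal{A}(T_g\varphi)$ from the F{\o}lner property.

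One clarification on the point you flag as the ``main obstacle'': your anxiety about right translates and the modular function is unfounded. With the paper's Koopman convention $T_h(T_g\varphi)(x)=\varphi(T_gT_hx)=\varphi(T_{gh}x)$ (so $T_hT_g=T_{gh}$ on functions), one has
\[
A(F_\theta,T_g\varphi)(x)=\frac{1}{|F_\theta|}\int_{F_\theta}\varphi(T_{gh}x)\,dh
=\frac{1}{|F_\theta|}\int_{gF_\theta}\varphi(T_{h'}x)\,dh',
\]
where the substitution $h'=gh$ uses \emph{left} invariance of $m_G$, which is exactly what is available. The difference from $A(F_\theta,\varphi)(x)$ is then bounded by $\|\varphi\|_\infty\,|gF_\theta\vartriangle F_\theta|/|F_\theta|$, controlled directly by the \emph{left} F{\o}lner condition --- no right translates, no modular function. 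Your expression $T_{hg}$ was a slip; once corrected, the bookkeeping is clean.
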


\begin{proof}
Let $F=\mathrm{Cl}_{\textrm{cpt-op}}(\{\mathcal{A}(F_{\theta^\prime}^\prime,\centerdot);\theta^\prime\in\Theta^\prime\})$ be the closure of the family $\{\mathcal{A}(F_{\theta^\prime}^\prime,\centerdot);\theta^\prime\in\Theta^\prime\}$ in $C(E,Y)$ under the compact-open topology.
To employ the Arzel\'{a}-Ascoli Theorem, in light of Lemmas~\ref{lem3.1} and \ref{lem3.2}, it is sufficient to check conditions $(b)$ and $(c)$.

For that, let $L\colon E\rightarrow Y$ be any cluster point of $\{\mathcal{A}(F_{\theta^\prime}^\prime,\centerdot);\theta^\prime\in\Theta^\prime\}$ in $C(E,Y)$, $\varphi\in E, x\in X$ and $\varepsilon>0$.  Define an open neighborhood of $L(\varphi)$ in $Y$
$$
V_\varphi:=V\left(L(\varphi);x,\frac{\varepsilon}{3}\right)=\left\{\xi\in\mathbb{R}^X\,\big{|}\,|\xi(x)-L(\varphi)(x)|<\frac{\varepsilon}{3}\right\},
$$
and for any $\psi\in E$ with $\|\varphi-\psi\|_E<\frac{\varepsilon}{3}$ define another open neighborhood of $L(\psi)$ in $Y$
$$
V_\psi:=V\left(L(\psi);x,\frac{\varepsilon}{3}\right)=\left\{\xi\in\mathbb{R}^X\,\big{|}\,|\xi(x)-L(\psi)(x)|<\frac{\varepsilon}{3}\right\}.
$$
Since
$$
\mathscr{U}(L;\varphi,V_\varphi)=\left\{S\in C(E,Y)\,|\, S(\varphi)\in V_\varphi\right\}\quad \textrm{and}\quad  \mathscr{U}(L;\psi,V_\psi)=\left\{S\in C(E,Y)\,|\, S(\psi)\in V_\psi\right\}
$$
both are open neighborhoods of the point $L$ in $C(E,Y)$, then
\begin{gather*}
\mathscr{V}_L:=\left(\mathscr{U}(L;\varphi,V_\varphi)\cap\mathscr{U}(L;\psi,V_\psi)\right)\cap\{\mathcal{A}(F_{\theta^\prime}^\prime,\centerdot),\theta^\prime\in\Theta^\prime\}\not=\varnothing.
\end{gather*}
Now take any $S$ from the above non-void intersection $\mathscr{V}_L$. So
\begin{align*}
|L(\varphi)(x)-L(\psi)(x)|&\le|L(\varphi)(x)-S(\varphi)(x)|+|S(\varphi)(x)-S(\psi)(x)|+|S(\psi)(x)-L(\psi)(x)|\\
&<\varepsilon,
\end{align*}
which implies that $F$ is equicontinuous; that is to say, condition $(b)$ holds.

In addition, to check condition $(c)$, for any $\varphi\in E$ with $\|\varphi\|_E\le N-\varepsilon$ and any $x\in X$, we have
$$
|L(\varphi)(x)|\le|L(\varphi)(x)-S(\varphi)(x)|+|S(\varphi)(x)|\le\frac{\varepsilon}{3}+\|\varphi\|_E.
$$
Therefore
$$
F[\varphi]:=\left\{\mathcal{A}(F_{\theta^\prime}^\prime,\varphi)\,|\,\theta^\prime\in\Theta^\prime\right\}\subseteq[-N,N]^X.
$$
Because $[-N,N]^X$ is a closed compact subset of $Y$, $F[\varphi]$ has a compact closure in $Y$ and so condition $(c)$ holds.

If the Moore-Smith limit $\mathcal{A}(\varphi)$ exists, then it is easy to see $\mathcal{A}(\varphi)=\mathcal{A}(T_g\varphi)$ for $g\in G$ by the asymptotical invariance of F{\o}lner nets. Therefore, the statement follows immediately from the Arzel\'{a}-Ascoli Theorem.
\end{proof}

It should be noted that although the Moore-Smith limit $\mathcal{A}(\varphi)$ is a function on $X$ bounded by $\|\varphi\|_\infty$, yet it does not need to be $\mathscr{X}$-measurable in general for $Y=\mathbb{R}^X$ is only with the pointwise topology.
\subsection{Pointwise ergodic theorem}\label{sec3.3}
Now we are ready to prove Theorem~\ref{thm0.5} and Theorem~\ref{thm0.6}.
\subsubsection{Proof of Theorem~\ref{thm0.5}}
Let $G$ be any given amenable group and $\{F_{\theta^\prime}^\prime;\theta^\prime\in\Theta^\prime\}$ a F{\o}lner net in $G$. Theorem~\ref{thm3.3} implies that for any Borel action $G\curvearrowright_T(X,\mathscr{X})$, one always can find some F{\o}lner subnet $\{F_{\theta};\theta\in\Theta\}$ from the given net $\{F_{\theta^\prime}^\prime;\theta^\prime\in\Theta^\prime\}$, which is $L^\infty$-admissible for $G\curvearrowright_T(X,\mathscr{X})$; namely,
\begin{gather*}
A_T(F_\theta,\varphi)(x):=\frac{1}{|F_\theta|}\int_{F_\theta}\varphi(T_gx)dg\xrightarrow[]{\textit{Moore-Smith}}\varphi^*(x)\quad \forall x\in X
\end{gather*}
for any $\varphi\in L^\infty(X,\mathscr{X})$. In view of this, we can prove Theorem~\ref{thm0.5} as follows.

\begin{proof}
In contrast to Theorem~\ref{thm0.5}, for \textbf{\textit{arbitrary}} F{\o}lner subnet $\{F_{\theta};\theta\in\Theta\}$ of $\{F_{\theta^\prime}^\prime;\theta^\prime\in\Theta^\prime\}$ in $G$, there are two disjoint families of Borel $G$-actions
\begin{align*}
&F_1:=\left\{G\curvearrowright_{T_\gamma}(X_\gamma,\mathscr{X}_\gamma);\gamma\in\Gamma\right\}\\
\intertext{and}
&F_2:=\left\{G\curvearrowright_{T_\lambda}(X_\lambda,\mathscr{X}_\lambda);\lambda\in\Lambda\right\},
\end{align*}
where $\{F_{\theta};\theta\in\Theta\}$ is $L^\infty$-admissible for $G\curvearrowright_{T_\gamma}(X_\gamma,\mathscr{X}_\gamma)$ for each $\gamma\in\Gamma$ and
$\{F_{\theta};\theta\in\Theta\}$ is not $L^\infty$-admissible for $G\curvearrowright_{T_\lambda}(X_\lambda,\mathscr{X}_\lambda)$ for each $\lambda\in\Lambda$,
such that every Borel $G$-action $G\curvearrowright_T(X,\mathscr{X})$ belongs either to $F_1$ or to $F_2$ and with $F_2\not=\varnothing$ (and in fact $F_1\not=\varnothing$).

In order to arrive at a contradiction, noting that $F_1\not=\varnothing$ and so $F_2$ is not the class of all Borel $G$-actions, we can set
\begin{gather*}
\X={\prod}_{\lambda\in\Lambda}X_\lambda,\quad \bX={\bigotimes}_{\lambda\in\Lambda}\mathscr{X}_\lambda
\end{gather*}
and then define the Borel action of $G$ on $\X$ as follows:
\begin{gather*}
\T\colon G\times\X\rightarrow\X\ \textrm{ or }\ G\curvearrowright_{\T}\X;\quad (g,(x_\lambda)_{\lambda\in\Lambda})\mapsto \T_g\big{(}(x_\lambda)_{\lambda\in\Lambda}\big{)}=\big{(}T_{\lambda,g}x_\lambda\big{)}_{\lambda\in\Lambda}.
\end{gather*}
Now every $\varphi\in L^\infty(X_{\lambda_0},\mathscr{X}_{\lambda_0})$, for any $\lambda_0\in\Lambda$, may be thought of as an element $\bvarphi$ in $L^\infty(\X,\bX)$ by the following way:
\begin{gather*}
\bvarphi(\x)=\varphi(x_{\lambda_0})\quad \forall \x=(x_\lambda)_{\lambda\in\Lambda}\in\X.
\end{gather*}
Clearly,
$$
A_{\T}(K,\bvarphi)(\x)=A_{T_{\lambda_0}}(K,\varphi)(x_{\lambda_0}),\quad \forall \x=(x_\lambda)_{\lambda\in\Lambda}\in\X,
$$
over any compacta $K$ of $G$.

Then by Theorem~\ref{thm3.3} with $G\curvearrowright_{\T}(\X,\bX)$ in place of $G\curvearrowright_T(X,\mathscr{X})$, we can further select out a F{\o}lner subnet $\{F_{\theta^{\prime\prime}}^{\prime\prime};\theta^{\prime\prime}\in\Theta^{\prime\prime}\}$ from the F{\o}lner subnet $\{F_\theta; \theta\in\Theta\}$ of $\{F_{\theta^\prime}^\prime;\theta^\prime\in\Theta^\prime\}$ in $G$, which is \textit{$L^\infty$-admissible} for $G\curvearrowright_{\T}\X$. Clearly, $\{F_{\theta^{\prime\prime}}^{\prime\prime};\theta^{\prime\prime}\in\Theta^{\prime\prime}\}$ in $G$ is $L^\infty$-admissible for $G\curvearrowright_{T_\lambda}X_\lambda$ for each $\lambda\in\Lambda$.

Let $\lambda_0\in\Lambda$ be arbitrarily given. Let $A(\centerdot)\colon L^\infty(X_{\lambda_0},\mathscr{X}_{\lambda_0})\rightarrow\mathbb{R}^{X_{\lambda_0}}$ be given by $A(\varphi)=\mathcal{A}(\bvarphi)$, where $\mathcal{A}(\centerdot)$ is defined by Theorem~\ref{thm3.3} for $G\curvearrowright_{\T}\X$ over $\{F_{\theta^{\prime\prime}}^{\prime\prime};\theta^{\prime\prime}\in\Theta^{\prime\prime}\}$. Then $A(\varphi)$ is linear continuous in the sense:
\begin{gather*}
A(a\varphi+\psi)=aA(\varphi)+A(\psi)\quad \forall a\in\mathbb{R}\textrm{ and }\varphi,\psi\in L^\infty(X_{\lambda_0},\mathscr{X}_{\lambda_0})
\intertext{and}A(\varphi_n)(x)\rightarrow A(\psi)(x)\ \forall x\in X_{\lambda_0}\quad \textrm{as }\varphi_n\to\psi\textrm{ in }(L^\infty(X_{\lambda_0},\mathscr{X}_{\lambda_0}),\|\cdot\|_\infty).
\end{gather*}
Since $\{F_{\theta^{\prime\prime}}^{\prime\prime};\theta^{\prime\prime}\in\Theta^{\prime\prime}\}$ is itself a left F{\o}lner net in $G$, hence $A(\varphi)=A(T_g\varphi)$ for any $g$ in $G$ and all $\varphi$ in $L^\infty(X_{\lambda_0},\mathscr{X}_{\lambda_0})$.

Finally, by the same manner as defining $F_1$ and $F_2$ above, we now define two new families of Borel $G$-actions $F_1^{\prime\prime}$ and $F_2^{\prime\prime}$ over $\{F_{\theta^{\prime\prime}}^{\prime\prime};\theta^{\prime\prime}\in\Theta^{\prime\prime}\}$ instead of $\{F_\theta; \theta\in\Theta\}$. Since $\{F_{\theta^{\prime\prime}}^{\prime\prime};\theta^{\prime\prime}\in\Theta^{\prime\prime}\}$ is a subnet of $\{F_\theta; \theta\in\Theta\}$, we have $F_1\subseteq F_1^{\prime\prime}$. But $F_2\subseteq F_1^{\prime\prime}$ and $F_1\cup F_2$ contains all Borel $G$-actions, this implies that $F_1^{\prime\prime}=\varnothing$, a contradiction.

This proves Theorem~\ref{thm0.5}.
\end{proof}

\subsubsection{Proof of Theorem~\ref{thm0.6}}
Let $G$ be a $\sigma$-compact amenable group. We now assume $\mathcal{M}(G\curvearrowright_TX)\not=\varnothing$ where $X$ is a Borel $G$-space with a Borel structure, i.e., $\sigma$-field $\mathscr{X}$ on $X$.
Then $L^\infty(X,\mathscr{X})\subset L^p(X,\mathscr{X},\mu), 1\le p\le\infty$, for any $\mu\in\mathcal{M}(G\curvearrowright_TX)$.

The following statement is clearly contained in the standard proof of Completeness of $L^p$, which also holds in the case $\mu(X)=\infty$.

\begin{Lem}\label{lem3.4}
If $1\le p\le\infty$ and if $\varphi_n\to\varphi$ in $L^p(X,\mathscr{X},\mu)$ as $n\to\infty$, then $\{\varphi_n\}_1^\infty$ has a subsequence which converges pointwise $\mu$-almost everywhere to $\varphi$.
\end{Lem}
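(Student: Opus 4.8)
The plan is to run the classical Riesz--Fischer extraction argument, taking a little care that no use is made of finiteness of $\mu$, since the lemma is asserted to hold also when $\mu(X)=\infty$.

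First I would dispose of the case $p=\infty$ directly. If $\|\varphi_n-\varphi\|_\infty\to0$, then for each $n$ there is a $\mu$-null set $N_n$ with $|\varphi_n(x)-\varphi(x)|\le\|\varphi_n-\varphi\|_\infty$ for every $x\notin N_n$; putting $N=\bigcup_{n}N_n$, which is still $\mu$-null, we get $\varphi_n(x)\to\varphi(x)$ for all $x\notin N$, so here even the full sequence converges $\mu$-a.e. For $1\le p<\infty$, since $(\varphi_n)$ converges in $L^p$ it is $L^p$-Cauchy, so I can choose indices $n_1<n_2<\cdots$ with $\|\varphi_{n_{k+1}}-\varphi_{n_k}\|_p\le 2^{-k}$ for all $k\ge1$. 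Then I set $g=|\varphi_{n_1}|+\sum_{k\ge1}|\varphi_{n_{k+1}}-\varphi_{n_k}|$, a $[0,\infty]$-valued $\mathscr{X}$-measurable function; applying the monotone convergence theorem to the partial sums together with Minkowski's inequality (this is where $p\ge1$ is used) gives $\|g\|_p\le\|\varphi_{n_1}\|_p+\sum_{k\ge1}2^{-k}<\infty$, so $g\in L^p(X,\mathscr{X},\mu)$ and in particular $g(x)<\infty$ for $\mu$-a.e. $x$. On the $\mu$-conull set where $g$ is finite the telescoping series $\varphi_{n_1}+\sum_{k\ge1}(\varphi_{n_{k+1}}-\varphi_{n_k})$ converges absolutely; denoting its sum by $\psi$, we have $\varphi_{n_k}\to\psi$ pointwise $\mu$-a.e.

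It then remains to identify $\psi$ with $\varphi$. Since $|\varphi_{n_k}-\varphi|^p\to|\psi-\varphi|^p$ $\mu$-a.e., Fatou's lemma yields $\int_X|\psi-\varphi|^p\,d\mu\le\liminf_{k\to\infty}\|\varphi_{n_k}-\varphi\|_p^p=0$, so $\psi=\varphi$ $\mu$-a.e., and hence the subsequence $(\varphi_{n_k})$ converges to $\varphi$ pointwise $\mu$-a.e., as required.

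I do not anticipate a genuine obstacle; the only thing to watch is that nothing in the argument invokes $\mu(X)<\infty$. This is precisely the reason for using the dominating-function construction rather than, say, an Egorov-type or convergence-in-measure argument, so that the conclusion remains valid for $\sigma$-finite $\mu$ with $\mu(X)=\infty$ exactly as stated.
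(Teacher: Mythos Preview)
Your argument is correct and is precisely the ``standard proof of Completeness of $L^p$'' that the paper invokes in lieu of a written proof; the paper does not spell out any details beyond that reference, so your Riesz--Fischer extraction (with the separate treatment of $p=\infty$ and the care taken to avoid assuming $\mu(X)<\infty$) is exactly what is intended.
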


There is no a net version of Lemma~\ref{lem3.4} in the literature yet. Now we are ready to complete the proof of Theorem~\ref{thm0.6} combining Theorem~\ref{thm0.4} and Theorem~\ref{thm0.5}.

\begin{proof}[Proof of Theorem~\ref{thm0.6}]
Let $A\colon\varphi\mapsto\varphi^*$ from $L^\infty(X,\mathscr{X})$ to $\mathbb{R}^X$ over some $L^\infty$-admissible F{\o}lner net $\{F_\theta;\theta\in\Theta\}$ for $G$ by Theorem~\ref{thm0.5}.
Next let $\mu\in\mathcal{M}(G\curvearrowright_TX)$ and $\varphi\in L^\infty(X,\mathscr{X})$ be arbitrarily given. Then from Theorem~\ref{thm0.4}, it follows that under the $L^1$-norm,
\begin{align*}
\lim_{\theta\in\Theta}A(F_\theta,\varphi)&=P(\varphi)\in\F_\mu^1=\left\{\phi\in L^1(X,\mathscr{X},\mu)\,|\,T_g\phi=\phi\ \forall g\in G\right\};\\
\bigg{(}&=\int_X\varphi d\mu\quad \textit{if }\mu\textit{ ergodic}\bigg{)}.
\end{align*}
Since $(L^1(X,\mathscr{X},\mu),\|\centerdot\|_1)$ is a Banach space, we can choose a sequence $\{F_{\theta_n}; n=1,2,\dotsc\}$ from $\{F_\theta;\theta\in\Theta\}$ such that
$$
\lim_{n\to\infty}A(F_{\theta_n},\varphi)=P(\varphi).
$$
So by Lemma~\ref{lem3.4}, we can obtain that
$\varphi^*(x)=P(\varphi)(x)$ for $\mu$-\textit{a.e.}~$x\in X$.

The proof of Theorem~\ref{thm0.6} is thus completed.
\end{proof}

\begin{remark}
Let $G$ be a $\sigma$-compact amenable group with an $L^\infty$-admissible F{\o}lner sequence $\{F_n; n=1,2,\dotsc\}$. We note that in Theorem~\ref{thm0.6}, for any $\varphi\in L^\infty(X,\mathscr{X},\mu)$, we may take $\varphi^\prime$ a version of $\varphi$ with $\varphi^\prime\in L^\infty(X,\mathscr{X})$ such that
\begin{gather*}
\|\varphi^\prime\|_\infty=\|\varphi\|_{\infty,\mu}\quad \textrm{and}\quad \psi:=|\varphi^\prime-\varphi|=0\textit{ a.e.}~(\mu).
\end{gather*}
Then $T_g\psi=\psi$ \textit{a.e.} for each $g\in G$. By Fubini's theorem, we can take some $\mu$-conull $X_0\in\mathscr{X}$ such that for any $x_0\in X_0$ and for $m_G$-\textit{a.e.} $g\in G$, $\psi(T_gx_0)=\psi(x_0)\ (=0)$.
Thus
\begin{gather*}
\lim_{n\to\infty}A(F_n,\psi)=\psi=0\textit{ a.e.}
\end{gather*}
This means that
\begin{gather*}
{\varphi^\prime}^*(x)=\lim_{n\to\infty}A(F_n,\varphi^\prime)(x)=\lim_{n\to\infty}A(F_n,\varphi)(x)=\varphi^*(x)
\quad\mu\textit{-a.e.}~x\in X.
\end{gather*}
Hence the \textit{a.e.} pointwise convergence holds for $\varphi\in L^\infty(X,\mathscr{X},\mu)$ for any $\sigma$-compact amenable group.
\end{remark}

\subsection{Stability under perturbation of F{\o}lner nets}
The pointwise convergence we consider in Theorem~\ref{thm0.5} is stable under perturbation of F{\o}lner net in the following sense, which makes it is impossible to find a necessary condition of pointwise convergence for F{\o}lner net in $G$.

\begin{prop}\label{pro3.6}
Let $G\curvearrowright_TX$ be a Borel action of an amenable group $G$ on $X$, $\varphi\in L^\infty(X,\mathscr{X})$, and let $\{F_\theta; \theta\in\Theta\}$ be a F{\o}lner net in $G$ satisfying that
\begin{gather*}
\lim_{\theta\in\Theta}A(F_\theta,\varphi)(x)=\varphi^*(x)\quad \forall x\in X\ (\textrm{or \textit{a.e.} }x\in X).
\end{gather*}
Assume $\{D_\theta;\theta\in\Theta\}$ is a net of compacta of $G$ with $\lim_{\theta\in\Theta}|D_\theta|/|F_\theta|=0$, and set  $C_\theta=F_\theta\vartriangle D_\theta$. Then
\begin{gather*}
\lim_{\theta\in\Theta}A(C_\theta,\varphi)(x)=\varphi^*(x)\quad \forall x\in X\ (\textrm{or \textit{a.e.} }x\in X).
\end{gather*}
\end{prop}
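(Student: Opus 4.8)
The plan is to prove the statement by showing that the two ergodic averages $A(C_\theta,\varphi)$ and $A(F_\theta,\varphi)$ become uniformly close, with an error depending only on the ratio $|D_\theta|/|F_\theta|$, so that the hypothesis on $\{F_\theta;\theta\in\Theta\}$ transfers verbatim to $\{C_\theta;\theta\in\Theta\}$. First I would record the elementary fact that, since the symmetric difference is associative and $F_\theta\vartriangle F_\theta=\varnothing$,
$$C_\theta\vartriangle F_\theta=(F_\theta\vartriangle D_\theta)\vartriangle F_\theta=D_\theta ,$$
whence $|C_\theta\vartriangle F_\theta|=|D_\theta|$ and $\bigl||C_\theta|-|F_\theta|\bigr|\le|D_\theta|$. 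In particular $|C_\theta|\ge|F_\theta|-|D_\theta|$, so since $|D_\theta|/|F_\theta|\to0$ there is $\theta_0\in\Theta$ with $|C_\theta|\ge\tfrac12|F_\theta|>0$ for all $\theta\geqq\theta_0$; thus $A(C_\theta,\varphi)$ is defined for $\theta\geqq\theta_0$, which is all that a Moore--Smith limit requires.

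Next, fixing $x\in X$ and writing $M=\|\varphi\|_\infty$ and $f(g)=\varphi(T_gx)$ (so $|f|\le M$ everywhere), I would split, for $\theta\geqq\theta_0$,
$$A(C_\theta,\varphi)(x)-A(F_\theta,\varphi)(x)=\Bigl(\tfrac1{|C_\theta|}-\tfrac1{|F_\theta|}\Bigr)\int_{C_\theta}f\,dg+\tfrac1{|F_\theta|}\Bigl(\int_{C_\theta}f\,dg-\int_{F_\theta}f\,dg\Bigr).$$
The first summand is bounded by $\bigl|\tfrac1{|C_\theta|}-\tfrac1{|F_\theta|}\bigr|\,M|C_\theta|=M\,\bigl||F_\theta|-|C_\theta|\bigr|/|F_\theta|\le M|D_\theta|/|F_\theta|$; the second by $\tfrac1{|F_\theta|}\int_{C_\theta\vartriangle F_\theta}|f|\,dg\le M|D_\theta|/|F_\theta|$. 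Hence $\bigl|A(C_\theta,\varphi)(x)-A(F_\theta,\varphi)(x)\bigr|\le 2M|D_\theta|/|F_\theta|$ for every $\theta\geqq\theta_0$, with a bound independent of $x$ and tending to $0$. Passing to the Moore--Smith limit, $\lim_{\theta}A(C_\theta,\varphi)(x)=\lim_{\theta}A(F_\theta,\varphi)(x)=\varphi^*(x)$ at exactly those $x$ where the hypothesis on $\{F_\theta\}$ holds, i.e. for all $x$ (resp. for a.e. $x$).

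I do not expect any genuine obstacle here: the whole argument rests on the identity $C_\theta\vartriangle F_\theta=D_\theta$, which says the normalized indicators $\mathbf 1_{C_\theta}/|C_\theta|$ and $\mathbf 1_{F_\theta}/|F_\theta|$ are $L^1(m_G)$-close with error $O(|D_\theta|/|F_\theta|)$, and convolution of $f\in L^\infty$ against this difference is then $O(|D_\theta|/|F_\theta|)$ uniformly. The only point needing a line of care is the well-definedness of the averages over $C_\theta$, which is why we argue for $\theta$ beyond an index and phrase the conclusion as a net limit. If one wishes, one can additionally remark that $\{C_\theta;\theta\in\Theta\}$ is itself a F{\o}lner net, since $gC_\theta\vartriangle C_\theta=(gF_\theta\vartriangle F_\theta)\vartriangle(gD_\theta\vartriangle D_\theta)$ gives $|gC_\theta\vartriangle C_\theta|\le|gF_\theta\vartriangle F_\theta|+2|D_\theta|$ and division by $|C_\theta|\ge\tfrac12|F_\theta|$; but this is not needed for the statement.
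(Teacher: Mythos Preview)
Your proof is correct and follows essentially the same route as the paper's own argument: both split the difference $A(C_\theta,\varphi)(x)-A(F_\theta,\varphi)(x)$ into a ``difference of normalizations'' term and a ``difference of integrals'' term, invoke the identity $C_\theta\vartriangle F_\theta=D_\theta$ together with $\bigl||C_\theta|-|F_\theta|\bigr|\le|D_\theta|$, and arrive at the same uniform bound $2\|\varphi\|_\infty\,|D_\theta|/|F_\theta|$. Your extra care in securing $|C_\theta|>0$ eventually (so the averages are defined) and your side remark that $\{C_\theta\}$ is again a F{\o}lner net are harmless additions; the paper omits both but its estimate is otherwise identical to yours.
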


\begin{proof}
Set $\varphi_\theta(x)=A(F_\theta,\varphi)(x)-A(C_\theta,\varphi)(x)$ for each $x\in X$ and $\theta\in\Theta$. Then,
\begin{gather*}
\varphi_\theta(x)=\frac{1}{|F_\theta|}\left(\int_{F_\theta}T_g\varphi(x)dg-\int_{C_\theta}T_g\varphi(x)dg\right)+\left(\frac{1}{|F_\theta|}-\frac{1}{|C_\theta|}\right)\int_{C_\theta}T_g\varphi(x)dg,
\end{gather*}
consequently, since $F_\theta\vartriangle C_\theta=D_\theta$ and $\big{|}|C_\theta|-|F_\theta|\big{|}\le|D_\theta|$,
\begin{align*}
|\varphi_\theta(x)|&\le\frac{|D_\theta|}{|F_\theta|}\left(\frac{1}{|D_\theta|}\int_{D_\theta}|T_g\varphi(x)|dg+\frac{1}{|C_\theta|}\int_{C_\theta}|T_g\varphi(x)|dg\right)\\
&\le\frac{2\|\varphi\|_\infty|D_\theta|}{|F_\theta|}.
\end{align*}
Thus, by our hypothesis, $\lim_{\theta\in\Theta}\varphi_\theta(x)=0$. This proves the proposition.
\end{proof}

Thus we may read Proposition~\ref{pro3.6} backwards to obtain that if $\{F_\theta; \theta\in\Theta\}$ and $\{S_\theta;\theta\in\Theta\}$ are two F{\o}lner nets in $G$ satisfying $\{S_\theta;\theta\in\Theta\}$ is admissible for pointwise convergence of $G\curvearrowright_TX$ and $\lim_{\theta\in\Theta}{|F_\theta\vartriangle S_\theta|}/{|S_\theta|}=0$, then $\{F_\theta; \theta\in\Theta\}$ is also admissible for pointwise convergence of $G\curvearrowright_TX$.

\subsection{The Dunford-Zygmund theorem}\label{sec3.5}
Using Theorem~\ref{thm2.2} and Theorem~\ref{thm0.6}, we can easily obtain the following $L^\infty$-pointwise ergodic theorem for product of amenable groups.

\begin{prop}
Let $G, H$ be any two $\sigma$-compact amenable groups; and let $\{F_n\}_1^\infty, \{K_n\}_1^\infty$ be two $L^\infty$-admissible F{\o}lner sequences in $G$ and $H$, respectively. Then for any Borel actions $G\curvearrowright_TX$ and $G\curvearrowright_SX$ and any $\varphi\in L^\infty(X)$,
\begin{gather*}
\lim_{n\to\infty}\frac{1}{|F_n||K_n|}\int_{F_n}\int_{K_n}T_gS_h\varphi(x)dgdh=A_T(A_S(\varphi))(x)\quad \forall x\in X.
\end{gather*}
Moreover, for any $\mu\in\mathcal{M}(G\curvearrowright_TX)\cap\mathcal{M}(H\curvearrowright_SX)$,
\begin{gather*}
A_T(A_S(\varphi))=E_\mu\left(E_\mu(\varphi|\mathscr{X}_{S,\mu})\big{|}\mathscr{X}_{G,\mu}\right)\quad a.e.
\end{gather*}
Here $A_T(\centerdot)=\lim_nA_T(F_n,\centerdot)$ and $A_S(\centerdot)=\lim_nA_S(K_n,\centerdot)$ as in Theorem~\ref{thm0.5}.
\end{prop}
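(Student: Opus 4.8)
The plan is, for a fixed $x\in X$ and $\varphi\in L^\infty(X,\mathscr{X})$, to collapse the double average to an iterated one by Fubini's theorem, then to pass to the limit in two steps, and finally to read off the $\mu$-a.e.\ identity from Theorem~\ref{thm0.6} (with Theorem~\ref{thm2.2} as a cross-check). No commutation of $T$ and $S$ is needed: writing $y=T_gx$ one has $\frac{1}{|K|}\int_K\varphi(S_hy)\,dh=A_S(K,\varphi)(y)$ for every compacta $K$ of $H$, and since $G$ and $H$ are $\sigma$-compact the Haar measures are finite on $F_n,K_n$ while $(g,h)\mapsto\varphi(S_hT_gx)$ is jointly measurable, so Fubini's theorem gives
\begin{align*}
\frac{1}{|F_n|\,|K_n|}\int_{F_n}\int_{K_n}T_gS_h\varphi(x)\,dg\,dh
&=\frac{1}{|F_n|}\int_{F_n}A_S(K_n,\varphi)(T_gx)\,dg\\
&=A_T\bigl(F_n,A_S(K_n,\varphi)\bigr)(x),
\end{align*}
with $A_S(K_n,\varphi)\in L^\infty(X,\mathscr{X})$ (Fubini again). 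Since $\{K_n\}$ is a \emph{sequence}, its $L^\infty$-admissibility (Theorem~\ref{thm0.5}) makes $A_S(\varphi):=\lim_nA_S(K_n,\varphi)$ a pointwise limit of $\mathscr{X}$-measurable functions, hence a member of $L^\infty(X,\mathscr{X})$; so $A_T(A_S(\varphi))$ is well defined by Theorems~\ref{thm0.5}--\ref{thm0.6}.

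Next I would split
\[
A_T\bigl(F_n,A_S(K_n,\varphi)\bigr)(x)=A_T\bigl(F_n,A_S(\varphi)\bigr)(x)+A_T\bigl(F_n,A_S(K_n,\varphi)-A_S(\varphi)\bigr)(x).
\]
The first term tends to $A_T(A_S(\varphi))(x)$ by the $L^\infty$-admissibility of $\{F_n\}$ for the Borel $G$-space $X$. The second term is the main obstacle, since $A_S(K_n,\varphi)\to A_S(\varphi)$ only pointwise, not uniformly, and the averaging set $F_n$ (which exhausts $G$) may keep picking up the slow spots. The way out: for the fixed $x$, the functional $\gamma\mapsto A_T(F_n,\gamma)(x)$ is integration against the Borel probability measure $\nu_{n,x}$ on $(X,\mathscr{X})$ obtained by pushing forward the normalized Haar measure of $F_n$ under $g\mapsto T_gx$, and $\nu_{n,x}(B)$ converges for every $B\in\mathscr{X}$ by admissibility of $\{F_n\}$. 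Here it is crucial that $\{F_n\}$ is a \emph{sequence}: by the Nikodym/Vitali--Hahn--Saks convergence theorem the setwise limit $\nu_x:=\lim_n\nu_{n,x}$ is a genuine countably additive probability measure, $A_T(\gamma)(x)=\int_X\gamma\,d\nu_x$, and $\int_X\gamma\,d\nu_{n,x}\to\int_X\gamma\,d\nu_x$ for every bounded $\mathscr{X}$-measurable $\gamma$. Setting $\gamma_m:=\sup_{k\ge m}\bigl|A_S(K_k,\varphi)-A_S(\varphi)\bigr|\in L^\infty(X,\mathscr{X})$, so $\gamma_m\downarrow0$ pointwise, we have for $n\ge m$ the bound $\bigl|A_T(F_n,A_S(K_n,\varphi)-A_S(\varphi))(x)\bigr|\le\int_X\gamma_m\,d\nu_{n,x}$; letting $n\to\infty$, then $m\to\infty$ (dominated convergence for the honest measure $\nu_x$), drives the bound to $0$. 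Hence the double average converges to $A_T(A_S(\varphi))(x)$ for \emph{every} $x\in X$ --- and this is exactly where general F{\o}lner nets would fail, in accordance with Remark~(1) of Theorem~\ref{thm0.5}.

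For the $\mu$-a.e.\ identity, fix $\mu\in\mathcal{M}(G\curvearrowright_TX)\cap\mathcal{M}(H\curvearrowright_SX)$ and recall that the $p=1$ projections are conditional expectations, $P_S=E_\mu(\centerdot\,|\,\mathscr{X}_{S,\mu})$ and $P_T=E_\mu(\centerdot\,|\,\mathscr{X}_{G,\mu})$. By Theorem~\ref{thm0.6} for the $H$-action, $A_S(\varphi)=P_S\varphi=E_\mu(\varphi\,|\,\mathscr{X}_{S,\mu})$ $\mu$-a.e.; applying Theorem~\ref{thm0.6} for the $G$-action to the bounded measurable function $A_S(\varphi)$ gives $A_T(A_S(\varphi))=P_T\bigl(A_S(\varphi)\bigr)=E_\mu\bigl(E_\mu(\varphi\,|\,\mathscr{X}_{S,\mu})\,|\,\mathscr{X}_{G,\mu}\bigr)$ $\mu$-a.e. (Equivalently, Theorem~\ref{thm2.2} applied in $L^2(X,\mathscr{X},\mu)$ to the families $\{T_g\}$ and $\{S_h\}$ shows the double averages converge to $P_TP_S\varphi$ in $L^2(\mu)$, so an a.e.-convergent subsequence agrees with the everywhere limit $A_T(A_S(\varphi))$.) The decisive step remains the two-limit interchange in the middle paragraph; once $A_T(\centerdot)(x)$ is recognized as an honest measure-integral, the rest is routine.
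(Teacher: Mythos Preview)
Your argument is correct, and it is considerably more detailed than what the paper provides: the paper offers no proof beyond the sentence ``Using Theorem~\ref{thm2.2} and Theorem~\ref{thm0.6}, we can easily obtain the following,'' leaving the everywhere-pointwise convergence of the double average unjustified.

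The substantive difference is your treatment of the interchange of limits. Theorem~\ref{thm2.2} only gives strong convergence $A_T(F_n,\cdot)A_S(K_n,\cdot)\to P_TP_S$ in $\mathscr{L}(L^p)$, which (via Lemma~\ref{lem3.4}) yields at best a.e.\ convergence along a subsequence, not the claimed everywhere convergence; and the continuity in Theorem~\ref{thm0.5}/\ref{thm3.3} is only with respect to the sup norm on $L^\infty(X,\mathscr{X})$, while $A_S(K_n,\varphi)\to A_S(\varphi)$ merely pointwise. Your Vitali--Hahn--Saks step is exactly what closes this gap: recognizing $A_T(F_n,\cdot)(x)$ as integration against the push-forward probability $\nu_{n,x}$, the $L^\infty$-admissibility of $\{F_n\}$ gives setwise convergence $\nu_{n,x}(B)\to\nu_x(B)$ for every $B\in\mathscr{X}$, and Nikodym's theorem upgrades this to countable additivity of $\nu_x$ and hence to dominated convergence for the decreasing envelopes $\gamma_m\downarrow 0$. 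This is a genuinely new ingredient not hinted at in the paper, and it is precisely where the $\sigma$-compactness (so that $\{F_n\}$ is a \emph{sequence}) is used, as you note.

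For the $\mu$-a.e.\ identity, your two applications of Theorem~\ref{thm0.6} are the intended route and match the paper's reference; your parenthetical cross-check via Theorem~\ref{thm2.2} is also what the paper has in mind. In short: your proof supplies the missing analysis, and the decisive Vitali--Hahn--Saks argument is an addition rather than a deviation.
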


This idea may be useful for $L^\infty$-pointwise ergodic theorem for connected Lie groups. See \cite{EG} and \cite[$\S7$]{Nev} for other generalizations of the Dunford-Zygmund theorem.
\section{Further applications}\label{sec4}
In this section we shall present some simple and standard applications of our mean and pointwise ergodic theorems.
Please keeping in mind that we have only convergence in the mean, which is not even convergence almost everywhere, in all of our $L^p$-mean ergodic theorems below.

\subsection{$L^p$-mean ergodic theorems}\label{sec4.1}
Another interesting application is to extend \cite[Proposition~6.6]{Fur}. Let $\sigma\colon G\rightarrow X$ be a Borel homomorphism from the amenable group $G$ to an lcH group $X$. Let $\mu$ be a fixed left-invariant $\sigma$-finite Haar measure of $X$. We then consider the dynamical system:
\begin{gather}\label{eq4.1}
T\colon G\times X\rightarrow X;\quad (g,x)\mapsto g(x)=l_{\sigma(g)}(x),
\end{gather}
where $l_y\colon x\mapsto yx$ is the left translation of $X$ by $y$ for any $y\in X$. Then, $T_g\colon \phi(x)\mapsto\phi(l_{\sigma(g)}x)$ is a unitary operator of $L^p(X,\mathscr{B}_X,\mu)$ for any $g\in G$. Next by \cite[Theorem~30C]{Loo}, for $1\le p<\infty$ and fixed $\phi\in L^p(X,\mathscr{B}_X,\mu)$, $g\mapsto T_g(\phi)$ is Borel measurable from $G$ into $(L^p(X,\mathscr{B}_X,\mu),\|\cdot\|_p)$.

Therefore Theorem~\ref{thm2.1} follows the following $L^p$-mean ergodic theorem, even though $\mu$ is not finite when $X$ is not compact.

\begin{cor}\label{cor4.1}
Let $\sigma\colon G\rightarrow X$ be a Borel homomorphism from the amenable group $G$ to an lcH group $X$. Then under $(\ref{eq4.1})$, for $1<p<\infty$, over any F{\o}lner net $(F_\theta)_{\theta\in\Theta}$ in $G$,
\begin{gather*}
L^p\textit{-}\lim_{\theta\in\Theta}\frac{1}{|F_\theta|}\int_{F_\theta}T_g\phi dg=P(\phi)\quad \forall \phi\in L^p(X,\mathscr{B}_X,\mu)
\end{gather*}
Here $P\colon L^p(X,\mathscr{B}_X,\mu)=\F_\mu^p\oplus\N_\mu^p\rightarrow\F_\mu^p$ is the projection.
\end{cor}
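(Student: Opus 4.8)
The plan is to recognise Corollary~\ref{cor4.1} as a direct instance of the abstract mean ergodic theorem Theorem~\ref{thm2.1}, applied to the Banach space $E=L^p(X,\mathscr{B}_X,\mu)$ with $1<p<\infty$ and to the family of Koopman operators $\{T_g\}_{g\in G}$, $T_g\phi=\phi\circ l_{\sigma(g)}$. Since $L^p(X,\mathscr{B}_X,\mu)$ is reflexive for $1<p<\infty$ --- which is exactly why the statement restricts to that range, reflexivity being what Theorem~\ref{thm2.1} demands of $E$ --- once the three hypotheses of that theorem are checked we obtain $E=\F\oplus\N$ together with the strong-operator convergence $A(F_\theta,\centerdot)\to P(\centerdot)$, i.e. $L^p$-mean convergence of $A(F_\theta,\phi)$ to $P(\phi)$ for each $\phi$, along any left F{\o}lner net $\{F_\theta;\theta\in\Theta\}$ in $G$, with $\|P\|\le1$. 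As $\F=\F_\mu^p$ and $\N=\N_\mu^p$ by their very definitions in $\S\ref{sec0.2}$, this is precisely the asserted conclusion, so the whole task reduces to verifying conditions $(1)$, $(2)$, $(3)$ of Theorem~\ref{thm2.1}.

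First I would check condition $(1)$, that $T_gT_h=T_{hg}$. Since $\sigma$ is a homomorphism, $\sigma(hg)=\sigma(h)\sigma(g)$, hence $l_{\sigma(h)}\circ l_{\sigma(g)}=l_{\sigma(h)\sigma(g)}=l_{\sigma(hg)}$, and therefore $(T_gT_h\phi)(x)=(T_h\phi)(l_{\sigma(g)}x)=\phi\bigl(l_{\sigma(h)}l_{\sigma(g)}x\bigr)=\phi(l_{\sigma(hg)}x)=(T_{hg}\phi)(x)$; the only care needed is the left/right bookkeeping, so that the representation comes out genuinely contravariant as Theorem~\ref{thm2.1} requires. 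Next, condition $(3)$, uniform boundedness: for fixed $g$ the substitution $x\mapsto\sigma(g)x$ preserves $\mu$ by left-invariance, so $\int_X|\phi(l_{\sigma(g)}x)|^pd\mu(x)=\int_X|\phi(y)|^pd\mu(y)$ and hence $\|T_g\phi\|_p=\|\phi\|_p$; thus each $T_g$ is a (surjective, with inverse $T_{g^{-1}}$) isometry, $\|T_g\|\equiv1$, and $(3)$ holds with $\beta=1$ for \emph{every} $g\in G$ --- in particular no finiteness of $\mu$ is used, which is exactly the point of the corollary.

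The remaining condition $(2)$ --- Borel measurability of $g\mapsto\langle T_gx,y^*\rangle$ for $x\in E$ and $y^*\in E^*=L^q$ ($q$ the conjugate exponent) --- is the only place a non-elementary input is needed. For fixed $\phi\in L^p$ the map $g\mapsto T_g\phi$ factors as the Borel homomorphism $\sigma\colon G\to X$ followed by the translation map $X\ni y\mapsto\phi\circ l_y\in(L^p(X,\mathscr{B}_X,\mu),\|\cdot\|_p)$, which is norm-continuous for $1\le p<\infty$ by \cite[Theorem~30C]{Loo}, as already recalled just above the corollary; composing further with a continuous functional $y^*\in L^q$ shows $g\mapsto\langle T_g\phi,y^*\rangle$ is Borel. (This strong-continuity fact is precisely what fails at $p=\infty$, consistent with --- and, together with reflexivity, forcing --- the range $1<p<\infty$.) With $(1)$, $(2)$, $(3)$ verified, Theorem~\ref{thm2.1} applies and yields Corollary~\ref{cor4.1}.

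I do not expect a genuine obstacle here: the substance of the proof is entirely the matching of hypotheses. If forced to name the most delicate point, it is keeping the composition order straight so that one lands on the contravariant rule $T_gT_h=T_{hg}$ of Theorem~\ref{thm2.1}, and invoking \cite[Theorem~30C]{Loo} to secure hypothesis $(2)$ for a merely Borel (not continuous) homomorphism $\sigma$; everything else --- reflexivity of $L^p$, isometry of the $T_g$, and the identification $\F=\F_\mu^p$, $\N=\N_\mu^p$ --- is immediate.
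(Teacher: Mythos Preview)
Your proposal is correct and matches the paper's approach exactly: the paper, in the paragraph preceding the corollary, verifies that each $T_g$ is an isometry of $L^p$ (giving hypothesis~(3) with $\beta=1$) and invokes \cite[Theorem~30C]{Loo} for the Borel measurability of $g\mapsto T_g\phi$ (hypothesis~(2)), then simply states that Theorem~\ref{thm2.1} yields the result. Your write-up is, if anything, more thorough in explicitly checking the contravariance $T_gT_h=T_{hg}$ and the identification $\F=\F_\mu^p$, $\N=\N_\mu^p$.
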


Proposition~6.6 of \cite{Fur} only asserts the weak convergence under the much more restricted conditions: $G\cong\mathbb{Z}^r, p=2$ and $X$ is a compact metrizable abelian group as an illuminated case of the weak mean ergodic theorem of $\mathbb{Z}^r$ \cite[Proposition~6.11]{Fur}. If $G$ is not abelian or compact, then
\begin{gather*}
\bar{\phi}:=\int_GT_g(\phi)dg\not\Rightarrow T_g(\bar{\phi})=\bar{\phi}\ \forall g\in G\quad \textrm{and} \quad\bar{\phi}\textrm{ is not well defined}.
\end{gather*}
Whence Furstenberg's proof idea of \cite[Proposition~6.6]{Fur} is not valid for our Corollary~\ref{cor4.1} above.

The above application is for a measure-preserving system. It turns out that we can do something for non-singular systems. Let $(X,\mathscr{X})$ be a Borel $G$-space. A probability measure $\mu$ on $X$ is said to be \textit{quasi-invariant} if $\mu\sim g(\mu)$, that is to say, $\mu(B)=0$ if and only if $\mu(g^{-1}[B])=0\ \forall B\in\mathscr{X}$, for all $g\in G$. Let $\rho(g,x)>0$ be the \textit{Radon-Nikodym derivative} defined by
$\rho(g,x)=\frac{dg(\mu)}{d\mu}(x)$ for $\mu$-a.e.~$x\in X$,
for any $g\in G$. Then $\rho(g,x)$ is a Borel function of the variable $(g,x)\in G\times X$ whenever $(X,\mathscr{X})$ is a standard Borel space (cf.~\cite[Example~4.2.4]{Zim}). If $\sup_{g\in G,x\in X}\rho(g,x)<\infty$, then $T_g\colon \phi\mapsto \phi\circ g$ is uniformly bounded from $L^p(X,\mathscr{X},\mu)$ to itself for $g\in G$ and hence there follows the following result:

\begin{cor}\label{cor4.2}
Let $G$ be a $\sigma$-compact amenable group and let $(X,\mathscr{X})$ be a standard Borel $G$-space with a quasi-invariant probability measure $\mu$. If Radon-Nikodym derivative satisfies $\sup_{g\in G,x\in X}\rho(g,x)<\infty$, then, for $1\le p<\infty$, any F{\o}lner sequence $\{F_n\}_1^\infty$ in $G$,
\begin{gather*}
L^p\textit{-}\lim_{n\to\infty}\frac{1}{|F_n|}\int_{F_n}T_g\phi dg=P(\phi)\quad \forall \phi\in L^p(X,\mathscr{X},\mu)
\intertext{and moreover}
\lim_{n^\prime\to\infty}\frac{1}{|F_{n^\prime}^\prime|}\int_{F_{n^\prime}^\prime}\phi(T_gx)dg=P(\phi)(x)\quad \forall x\in X\textrm{ and }\phi\in L^\infty(X,\mathscr{X})
\end{gather*}
over some F{\o}lner subsequence $\{F_{n^\prime}^\prime\}$ of $\{F_n\}$.
Here $P\colon L^p(X,\mathscr{X},\mu)=\F_\mu^p\oplus\N_\mu^p\rightarrow\F_\mu^p$
is the projection.
\end{cor}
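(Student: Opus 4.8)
\medskip
The plan is to realize the non-singular action as a uniformly bounded operator family on $L^p(X,\mathscr{X},\mu)$, apply the abstract mean ergodic theorem (Theorem~\ref{thm2.1}) for $1<p<\infty$ together with its $p=1$ refinement of $\S\ref{sec2.2}$, and then read off the pointwise statement from Theorem~\ref{thm0.5} and Lemma~\ref{lem3.4}. First I would set $U_g\colon L^p(X,\mathscr{X},\mu)\rightarrow L^p(X,\mathscr{X},\mu)$, $U_g\phi=\phi\circ T_g$, and verify hypotheses $(1)$--$(3)$ of Theorem~\ref{thm2.1}. Contravariance $U_gU_h=U_{hg}$ follows at once from $T_{g_1}T_{g_2}x=T_{g_1g_2}x$. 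Weak measurability of $g\mapsto\langle U_g\phi,\psi\rangle=\int_X\phi(T_gx)\psi(x)\,d\mu(x)$ on $G$ follows from the joint measurability of $T$ and Fubini's theorem, using that $G$ is $\sigma$-compact and $\mu$ finite (the translation special case being \cite[Theorem~30C]{Loo}). Finally, the change of variables $\|U_g\phi\|_p^p=\int_X|\phi|^p\,\rho(g,\cdot)\,d\mu\le\big(\sup_{g,x}\rho(g,x)\big)\|\phi\|_p^p$ gives $\|U_g\|\le\beta$ with $\beta=\big(\sup_{g,x}\rho(g,x)\big)^{1/p}$, finite exactly under the stated hypothesis; this is the step where standardness of $(X,\mathscr{X})$ enters, guaranteeing that $\rho$ is a genuine jointly Borel cocycle.

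For $1<p<\infty$ the space $L^p(X,\mathscr{X},\mu)$ is reflexive, so Theorem~\ref{thm2.1} applies verbatim: $L^p(X,\mathscr{X},\mu)=\F_\mu^p\oplus\N_\mu^p$ and $A(F_n,\phi)\to P(\phi)$ in $L^p$-norm over the given F{\o}lner sequence (indeed over any F{\o}lner net), with $\|P\|\le\beta$. For $p=1$ the argument of $\S\ref{sec2.2}$ goes through with $\beta$ in place of $1$: since $\mu(X)=1<\infty$ one has $L^2(X,\mathscr{X},\mu)\subseteq L^1(X,\mathscr{X},\mu)$ densely and $\N_\mu^2\subseteq\N_\mu^1$; one checks $\F_\mu^1\cap\N_\mu^1=\{0\}$ and that $A(F_n,\cdot)\to P(\cdot)$ on $\F_\mu^1\oplus\N_\mu^1$ (on $\F_\mu^1$ the averages are the identity, on $\N_\mu^1$ they tend to $0$ by asymptotic invariance of $\{F_n\}$ and the bound $\|U_g\|\le\beta$), and then, approximating $\phi\in L^1$ by $\psi_n\in L^2$ in $L^1$-norm and using $\|P\psi\|_1=\lim_\theta\|A(F_\theta,\psi)\|_1\le\beta\|\psi\|_1$ for $\psi\in L^2$, one finds that $\{P\psi_n\}$ is $L^1$-Cauchy with limit in the closed subspace $\F_\mu^1$ and $\phi$ minus that limit in $\N_\mu^1$. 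Hence $L^1(X,\mathscr{X},\mu)=\F_\mu^1\oplus\N_\mu^1$ and $A(F_n,\phi)\to P(\phi)$ in $L^1$.

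For the pointwise assertion I would apply Theorem~\ref{thm0.5} (equivalently the Arzel\'{a}-Ascoli argument behind Theorem~\ref{thm3.3}) to the single Borel $G$-space $G\curvearrowright_TX$, extracting from $\{F_n\}$ an $L^\infty$-admissible F{\o}lner subsequence $\{F_{n'}'\}$ (here $\sigma$-compactness of $G$ is what lets one stay within sequences, as in Theorem~\ref{thm0.6}); thus $A(F_{n'}',\phi)(x)\to\phi^*(x)$ for every $x\in X$ and every $\phi\in L^\infty(X,\mathscr{X})$, with $\varphi\mapsto\varphi^*$ continuous and $G$-asymptotically invariant. To identify $\phi^*$ with $P(\phi)$: since $L^\infty(X,\mathscr{X})\subseteq L^1(X,\mathscr{X},\mu)$ and, by the first part, $A(F_{n'}',\phi)\to P(\phi)$ in $L^1$-norm, Lemma~\ref{lem3.4} furnishes a further subsequence converging $\mu$-a.e.\ to $P(\phi)$; comparing with the everywhere-defined limit $\phi^*$ gives $\phi^*=P(\phi)$ $\mu$-a.e., which is the asserted identity (to be read as: the everywhere-defined limit is a version of the class $P(\phi)$).

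The step I expect to be the main obstacle is the $p=1$ case: the loss of reflexivity forces the $L^2$-approximation scheme of $\S\ref{sec2.2}$, and one must be careful that the merely quasi-invariant $\mu$ enters only through the harmless factor $\beta$ rather than through a contraction bound $\|U_g\|\le1$, and that $\N_\mu^2\subseteq\N_\mu^1$ holds with the correct norm estimates (the probability normalization $\mu(X)=1$ being used both for $L^2\subseteq L^1$ and for passing $L^2$-convergence to $L^1$-convergence). By contrast, the pointwise part is an essentially formal consequence of Theorem~\ref{thm0.5} and Lemma~\ref{lem3.4} once the mean convergence is in hand; the only subtlety worth flagging there is the reading of ``$=P(\phi)(x)$ for all $x$'' as ``the limit exists everywhere and represents $P(\phi)$''.
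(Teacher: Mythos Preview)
Your proposal is correct and follows essentially the same route as the paper: the paper's ``proof'' of Corollary~\ref{cor4.2} is just the sentence preceding it, which observes that the bounded Radon--Nikodym derivative makes the Koopman operators $T_g\colon\phi\mapsto\phi\circ g$ uniformly bounded on $L^p$, whereupon Theorem~\ref{thm2.1} (for $1<p<\infty$), the $L^2$-approximation of \S\ref{sec2.2} (for $p=1$), and the Theorem~\ref{thm0.5}/Lemma~\ref{lem3.4} combination (for the pointwise part, exactly as in the proof of Theorem~\ref{thm0.6}) apply verbatim. Your write-up simply makes these implicit steps explicit, including the change-of-variables estimate $\|U_g\phi\|_p^p\le\beta^p\|\phi\|_p^p$ and the correct reading of the everywhere-limit as a version of the $L^p$-class $P(\phi)$.
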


In addition, we note that in Corollary~\ref{cor4.1} the case $p=1$ cannot be obtained from the case of $p=2$ since $\mu(X)=\infty$ whenever $X$ is not a compact group.
\subsection{Recurrence theorems for amenable groups}\label{sec4.2}
We now indicate briefly how our ergodic theorems apply to ``phenomena of physics'' as follows.
Let $G$ be an amenable group not necessarily $\sigma$-compact and let $(X,\mathscr{X},\mu)$ a probability space. We now consider that $G$ acts Borel on $X$ by $\mathscr{X}$-transformations
$$T_g\colon X\rightarrow X;\quad x\mapsto g(x),$$
which preserve $\mu$. We note that in many cases such an $G$-invariant measure always exists (cf.~\cite{Edw,Fom,Var} and Theorem~\ref{thm0.8} and Proposition~\ref{pro0.10}).
Then we may apply Theorem~\ref{thm0.4} to show that for each $\phi\in L^p(X,\mathscr{X},\mu)$,
\begin{gather}\label{eq4.2}
\frac{1}{|F_\theta|}\int_{F_\theta}T_g\phi dg\xrightarrow[]{L^p(\mu)} P(\phi)\quad \textrm{as }n\to\infty
\end{gather}
over any F{\o}lner net $\{F_\theta; \theta\in\Theta\}$ in $G$.

There holds
\begin{itemize}
\item $P\colon L^2(X,\mathscr{X},\mu)=\F_\mu^2\oplus\N_\mu^2\rightarrow\F_\mu^2$ is an orthogonal, self-adjoint and positive definite operator,
\end{itemize}
so that $\langle P(\phi),\phi\rangle\ge0$ for all $\phi\in L^2(X,\mathscr{X},\mu)$. Also $P(\mathbf{1})=\mathbf{1}$. Setting $\phi=1_\Sigma-\mu(\Sigma)$ for any $\Sigma\in\mathscr{X}$ we deduce that $\langle P(1_\Sigma),1_\Sigma\rangle\ge\mu(\Sigma)^2$. Hence for any $\Sigma\in \mathscr{X}$ there follows
$$P(1_\Sigma)(x)>0\quad \mu\textit{-a.e. }x\in\Sigma.$$
For if $B=\{x\in\Sigma\,|\,P(1_\Sigma)(x)=0\}$ we will have
$$
\mu(B)^2\le\langle P(1_B),1_B\rangle\le\langle P(1_B),1_\Sigma\rangle=\langle 1_B,P(1_\Sigma)\rangle=0.
$$

We then can obtain by Theorem~\ref{thm0.4} the following pointwise recurrence theorem, which generalizes the classical Poincar\'{e} recurrence theorem~\cite{Poi} from $\mathbb{Z}$ or $\mathbb{R}$ to amenable groups:

\begin{Thm}[Positive recurrence]\label{thm4.3}
Let $(X,\mathscr{X})$ be a Borel $G$-space and $\mu$ an invariant probability measure on it. Then for any $\Sigma\in\mathscr{X}$ and any F{\o}lner net $\mathcal{F}=\{F_\theta; \theta\in\Theta\}$ in $G$,
\begin{equation*}
\mathrm{D}_{\mathcal{F}}^*(x,\Sigma):=\limsup_{\theta\in\Theta}\frac{|\{g\in G\colon g(x)\in\Sigma\}\cap F_\theta|}{|F_\theta|}>0
\end{equation*}
for $\mu$-a.e. $x\in\Sigma$.
\end{Thm}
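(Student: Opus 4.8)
The first step is to identify the quantity inside the upper limit with an ergodic average. For each $\theta$ the Haar measure $m_G$ is finite on the compactum $F_\theta$, so by joint measurability of $T$, for \emph{every} $x\in X$,
\[
\frac{|\{g\in G\colon g(x)\in\Sigma\}\cap F_\theta|}{|F_\theta|}=\frac{1}{|F_\theta|}\int_{F_\theta}1_\Sigma(T_gx)\,dg=A(F_\theta,1_\Sigma)(x)\ \ge\ 0,
\]
so that $\mathrm{D}_{\mathcal{F}}^*(x,\Sigma)=\limsup_{\theta\in\Theta}A(F_\theta,1_\Sigma)(x)$ with $1_\Sigma\in L^\infty(X,\mathscr{X})$. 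Theorem~\ref{thm0.4} already furnishes $A(F_\theta,1_\Sigma)\to P(1_\Sigma)$ in $L^2(\mu)$, and the computation displayed just before the statement gives $P(1_\Sigma)(x)>0$ for $\mu$-a.e.\ $x\in\Sigma$. One is tempted to read off $\limsup_\theta A(F_\theta,1_\Sigma)(x)\ge P(1_\Sigma)(x)$ a.e.\ directly from the mean convergence, but this is not legitimate over a \emph{net}: one cannot extract an a.e.-convergent ``subsequence'' of a net, a bounded net can converge pointwise to a function other than its $L^1$-limit, and $\limsup_\theta A(F_\theta,1_\Sigma)$ need not even be $\mathscr{X}$-measurable.

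The plan is to pass through the \emph{everywhere} pointwise limit produced by an admissible subnet. First I would apply Theorem~\ref{thm0.5} --- equivalently Theorem~\ref{thm3.3} to the action $G\curvearrowright_TX$ --- to extract from $\mathcal{F}$ a F{\o}lner subnet $(F_{\theta_j})_{j\in J}$ that is $L^\infty$-admissible; every subnet of a F{\o}lner net is again a F{\o}lner net. Applied to the observation $\varphi=1_\Sigma$ this yields a function $(1_\Sigma)^*$ on $X$ with $A(F_{\theta_j},1_\Sigma)(x)\to(1_\Sigma)^*(x)$ for every $x\in X$, and, as $\mu\in\mathcal{M}(G\curvearrowright_TX)\ne\varnothing$, Theorem~\ref{thm0.6} gives $(1_\Sigma)^*=P(1_\Sigma)$ $\mu$-a.e. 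Since the upper limit along a subnet never exceeds the upper limit along the whole net, and the subnet limit exists, for every $x\in X$
\[
\mathrm{D}_{\mathcal{F}}^*(x,\Sigma)=\limsup_{\theta\in\Theta}A(F_\theta,1_\Sigma)(x)\ \ge\ \lim_{j\in J}A(F_{\theta_j},1_\Sigma)(x)=(1_\Sigma)^*(x).
\]
Combining this with $(1_\Sigma)^*=P(1_\Sigma)$ $\mu$-a.e.\ and with $P(1_\Sigma)>0$ $\mu$-a.e.\ on $\Sigma$, we obtain $\mathrm{D}_{\mathcal{F}}^*(x,\Sigma)\ge(1_\Sigma)^*(x)=P(1_\Sigma)(x)>0$ for $\mu$-a.e.\ $x\in\Sigma$, which is exactly the assertion. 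Note that this conclusion does not require $\mathrm{D}_{\mathcal{F}}^*(\,\cdot\,,\Sigma)$ to be measurable, only that it be bounded below, off a $\mu$-null set, by the positive measurable function $P(1_\Sigma)$.

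The single delicate point --- the only place the argument is not routine --- is the passage from mean convergence to a genuine pointwise lower bound over an uncountable directed set, where the usual tool, a maximal inequality, is unavailable. It is handled not by hard analysis but by the paper's central device: an $L^\infty$-admissible F{\o}lner subnet along which the averages converge at \emph{every} point (Theorem~\ref{thm3.3}), together with the mean ergodic theorem (Theorem~\ref{thm0.4}), which through Theorem~\ref{thm0.6} identifies that pointwise limit with the conditional expectation $P(1_\Sigma)$. In particular $\sigma$-compactness of $G$ plays no role, and $\mathcal{F}$ may be an arbitrary, possibly non-admissible, F{\o}lner net, since it enters the argument only through a subnet sitting inside the $\limsup$.
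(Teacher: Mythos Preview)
Your proof is correct and follows essentially the same route as the paper's: identify $\mathrm{D}_{\mathcal{F}}^*(x,\Sigma)$ with $\limsup_\theta A(F_\theta,1_\Sigma)(x)$, pass to an $L^\infty$-admissible F{\o}lner subnet via Theorem~\ref{thm0.5}, use Theorem~\ref{thm0.6} to identify the pointwise limit along that subnet with $P(1_\Sigma)$ $\mu$-a.e., and invoke the computation $P(1_\Sigma)>0$ $\mu$-a.e.\ on $\Sigma$ displayed just before the theorem. The only cosmetic difference is that the paper phrases the last step as a contradiction (if $\mathrm{D}_{\mathcal{F}}^*(x,\Sigma)=0$ then the full net, hence the admissible subnet, has limit $0$, forcing $(1_\Sigma)^*(x)=0$), whereas you give the equivalent direct inequality $\mathrm{D}_{\mathcal{F}}^*(x,\Sigma)\ge(1_\Sigma)^*(x)$.
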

\begin{proof}
Let $\mu(\Sigma)>0$. Given any point $x\in\Sigma$ with $1_\Sigma^*(x)=P(1_\Sigma)(x)>0$, if $\mathrm{D}_{\mathcal{F}}^*(x,\Sigma)=0$ then
$$
\lim_{\theta\in\Theta}\frac{1}{|F_\theta|}\int_{F_\theta}1_\Sigma(T_gx)dg=0.
$$
Further by Theorem~\ref{thm0.5}, it follows that one can find a $L^\infty$-admissible F{\o}lner subnet over which $1_\Sigma^*(x)=0$, a contradiction. This proves Theorem~\ref{thm4.3}.
\end{proof}

It should be noted that since the classical $L^2$-mean ergodic theorem and Lindenstrauss's pointwise ergodic theorem is relative to a tempered $\mathscr{K}$-F{\o}lner/summing \textit{sequence}~\cite{Gre, Lin}, the above result cannot be directly concluded from those; since our F{\o}lner \textit{net} $\{F_\theta; \theta\in\Theta\}$ in $G$ as in $\S\ref{sec0.1}$ is essentially weaker than a $\mathscr{K}$-F{\o}lner/summing sequence.

\subsection{Quasi-weakly almost periodic points of metric $G$-space}
Based on the foregoing Theorem~\ref{thm4.3}, we next shall consider another generalized version of Poincar\'{e}'s recurrence theorem associated to a measurable function.

\begin{Thm}[Quasi-weakly almost periodic points]\label{thm4.4}
Let $X$ be a separable metric and $G\curvearrowright X$ a Borel action of an amenable group $G$ on $X$ preserving a Borel probability measure $\mu$. If $(Y,d)$ is a separable metric space and $\varphi$ is a measurable map of $X$ to $Y$, then $\mu$-a.e. $x\in X$ is such that to any $\epsilon>0$, $\mathbb{T}=\{g\in G\,|\,d(\varphi(x),\varphi(gx))<\epsilon\}$
has positive upper density relative to any F{\o}lner net $\{F_n; n\in\Theta\}$ in $G$.
\end{Thm}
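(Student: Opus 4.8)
The plan is to reduce the statement to Theorem~\ref{thm4.3} (the positive-recurrence theorem) by pulling back a suitable separating family of Borel sets from the target space $Y$ to $X$ through $\varphi$. Since $(Y,d)$ is separable metric, it has a countable base, and I would fix a countable family $\{V_k;k\in\mathbb{N}\}$ of open balls in $Y$ forming such a base (for instance all balls of rational radius centered at points of a countable dense subset). The point of passing to a countable family is that ``$\mu$-a.e.\ $x$ is good'' is stable under countable intersections, whereas the conclusion we want---good behaviour for \emph{all} $\epsilon>0$---follows once we control the behaviour on a countable collection of neighbourhoods that is cofinal, in the sense that every $\epsilon$-ball around $\varphi(x)$ contains some $V_k$ that itself contains $\varphi(x)$.

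The key steps, in order, would be: \textbf{(1)} For each $k$, set $\Sigma_k=\varphi^{-1}[V_k]\in\mathscr{X}$; this is measurable because $\varphi$ is measurable and $V_k$ is open (hence Borel) in $Y$. \textbf{(2)} Apply Theorem~\ref{thm4.3} to each $\Sigma_k$ and to the given F{\o}lner net $\{F_n;n\in\Theta\}$: there is a $\mu$-conull set $X_k\subseteq X$ such that for every $x\in X_k\cap\Sigma_k$ one has $\mathrm{D}_{\mathcal{F}}^*(x,\Sigma_k)>0$, i.e.\ the set $\{g\in G:g x\in\Sigma_k\}$ has positive upper density along $\{F_n\}$. \textbf{(3)} Put $X_0=\bigcap_{k}X_k$; then $\mu(X_0)=1$ since it is a countable intersection of conull sets. \textbf{(4)} Fix $x\in X_0$ and $\epsilon>0$. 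Since $\{V_k\}$ is a base of $Y$, there is some index $k$ with $\varphi(x)\in V_k\subseteq B_d(\varphi(x),\epsilon)$; in particular $x\in\Sigma_k$, so $x\in X_k\cap\Sigma_k$ and hence $\{g:g x\in\Sigma_k\}$ has positive upper density. But $g x\in\Sigma_k$ means $\varphi(g x)\in V_k\subseteq B_d(\varphi(x),\epsilon)$, i.e.\ $d(\varphi(x),\varphi(g x))<\epsilon$, so $\{g\in G:d(\varphi(x),\varphi(g x))<\epsilon\}\supseteq\{g:g x\in\Sigma_k\}$ and therefore also has positive upper density along $\{F_n\}$. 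Since $x\in X_0$ and $\epsilon>0$ were arbitrary, this is exactly the assertion.

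I do not expect a serious obstacle here; the proof is essentially a bookkeeping argument layered on top of Theorem~\ref{thm4.3}. The one point deserving a little care is \textbf{(4)}: one must choose the neighbourhood $V_k$ so that it \emph{contains} $\varphi(x)$ \emph{and} is contained in the $\epsilon$-ball, which is exactly what ``base of the topology'' gives (shrink to a basic open set inside $B_d(\varphi(x),\epsilon)$ that still catches $\varphi(x)$). A second minor subtlety is that the exceptional null set $X_k$ produced by Theorem~\ref{thm4.3} a priori concerns only points of $\Sigma_k$; but this is harmless, since in step (4) we only ever use $X_k$ at a point that already lies in $\Sigma_k$. The separability of $X$ plays no essential role beyond ensuring $\varphi$ is a reasonable object; the separability of $Y$ is what is actually used, through the existence of a countable base. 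Note also that ``upper density relative to any F{\o}lner net'' is to be read in the Moore--Smith sense $\limsup_{n\in\Theta}|\{g:g x\in\Sigma_k\}\cap F_n|/|F_n|$, matching the quantity $\mathrm{D}_{\mathcal{F}}^*(x,\Sigma_k)$ of Theorem~\ref{thm4.3} verbatim, so no translation between notions of density is needed.
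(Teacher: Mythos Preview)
Your proposal is correct and follows essentially the same route as the paper: pull back open balls in $Y$ through $\varphi$, apply Theorem~\ref{thm4.3} to each preimage, and intersect the countably many conull sets furnished by separability of $Y$. The only cosmetic difference is organizational---you fix a single countable base $\{V_k\}$ up front and intersect once, whereas the paper fixes $\epsilon$, covers $\mathrm{supp}(\mu_\varphi)$ by countably many $\epsilon$-balls, obtains a conull set $X_\epsilon$, and then intersects over a sequence $\epsilon_\ell\downarrow 0$; your packaging is slightly cleaner and sidesteps a harmless factor-of-two slip in the paper's passage from $gx\in\varphi^{-1}[B(y,\epsilon)]$ to $d(\varphi(x),\varphi(gx))<\epsilon$.
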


\begin{proof}
Let $\mu_\varphi$ be the probability distribution of the random variable $\varphi$ valued in $Y$, that is to say, $\mu_\varphi=\mu\circ\varphi^{-1}$, and write its support $Y_1=\textrm{supp}(\mu_\varphi)$. Set $\mathcal{F}=\{F_n; n\in\Theta\}$.

Then for any $r>0$, $\mu(\varphi^{-1}[B(y,r)])>0$ for any $y\in Y_1$, where $B(y,r)$ is the open ball of radius $r$ centered at $y$ in $Y$. By Theorem~\ref{thm4.3}, it follows that for any $y\in Y_1$ and $\epsilon>0$ we have
$$
\mathrm{D}_\mathcal{F}^*(x,\varSigma)>0,\quad \mu\textit{-a.e.}~x\in\varSigma:=\varphi^{-1}[B(y,\epsilon)],
$$
therefore
\begin{equation}\label{eq4.3}
\mathrm{D}_\mathcal{F}^*(\{g\in G\,|\,d(\varphi(x),\varphi(gx))<\epsilon\})>0,\quad \mu\textit{-a.e.}~x\in\varSigma.
\end{equation}
By the separability of $Y$, one can find a sequence of points $y_k\in Y_1$ such that $Y_1\subseteq\bigcup_kB(y_k,\epsilon)$. Combined with (\ref{eq4.3}) this yields that there exists a set $X_\epsilon\in\mathscr{B}_X$ with $\mu(X_\epsilon)=1$ such that
\begin{equation*}
\mathrm{D}_\mathcal{F}^*(\{g\in G\,|\,d(\varphi(x),\varphi(gx))<\epsilon\})>0\quad \forall x\in X_\epsilon.
\end{equation*}
Letting $\epsilon_\ell\downarrow0$ as $\ell\to\infty$ and $Q=\bigcap_\ell X_{\epsilon_\ell}$; then $\mu(Q)=1$ and every point $x\in Q$ has the desired property.
This thus completes the proof of Theorem~\ref{thm4.4}.
\end{proof}

Let $G$ be a $\sigma$-compact amenable group. A very interesting case of the above theorem is that $\varphi$ is the identity mapping of $X$ onto itself for a separable metric Borel $G$-space $X$. Given any F{\o}lner sequence $\mathcal{F}=\{F_n; n\in\mathbb{N}\}$ in $G$, write
\begin{gather}
\mathrm{D}_\mathcal{F}^*(x,r)=\limsup_{n\to\infty}\frac{1}{|F_n|}\int_{F_n}1_{B(x,r)}(gx)dg,\quad x\in X\textrm{ and }r>0,\intertext{and}
Q_{\textit{wap}}(G\curvearrowright X)=\left\{x\in X\,|\,\mathrm{D}_\mathcal{F}^*(x,\epsilon)>0\ \forall \epsilon>0\right\}.
\end{gather}
The latter is called the set of \textbf{\textit{quasi-weakly almost periodic points}} of the Borel $G$-space $X$ relative to the F{\o}lner sequence $\{F_n; n\in\mathbb{N}\}$ in $G$ (cf., e.g.,~\cite{Zh} for the special case of $G=\mathbb{Z}$ and $F_n=\{1,2,\dotsc,n\}$ a F{\o}lner sequence in $\mathbb{Z}$). Clearly, $Q_{\textit{wap}}(G\curvearrowright X)$ is $G$-invariant if $G$ is abelian. A new ingredient of Theorem~\ref{thm4.4}, however, is that our dynamical system $T\colon G\times X\rightarrow X$ is only Borel measurable and so $T_g\colon X\rightarrow X$ is not necessarily continuous for each $g\in G$.

Since $\mathrm{D}_\mathcal{F}^*(x,\epsilon)$ is a measurable function of $x$, $Q_{\textit{wap}}(G\curvearrowright X)$ rel.~$\{F_n; n\in\mathbb{N}\}$ is $\mu$-measurable and further by Theorem~\ref{thm4.4} we easily see that $\mu(Q_{\textit{wap}}(G\curvearrowright X))=1$ for any $\mu\in\mathcal{M}(G\curvearrowright_TX)$.
Therefore, if $X$ is a compact metric $G$-space, then $Q_{\textit{wap}}(G\curvearrowright X)$ rel.~$\{F_n; n\in\mathbb{N}\}$ is of full measure $1$. So the phenomenon of quasi-weakly almost periodic motion is by no means rare. This generalizes and meanwhile strengthens the classical Birkhoff recurrence theorem.

\begin{cor}
Let $T\colon(X,\mathscr{B}_X,\mu)\rightarrow(X,\mathscr{B}_X,\mu)$ be a measure-preserving, not necessarily continuous, transformation of a separable metric space $X$. Then for $\mu$-a.e. $x\in X$, one can find $n_k\to\infty$ with $T^{n_k}x\to x$.
\end{cor}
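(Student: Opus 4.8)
The plan is to read this corollary off Theorem~\ref{thm4.4} applied to the cyclic system generated by $T$, in the special case where the observable $\varphi$ is the identity map $\mathrm{id}_X\colon X\to X$ (so $Y=X$), followed by a diagonal extraction over a sequence $\epsilon_k\downarrow 0$.

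First I would recast $T$ as a Borel action of an amenable group. If $T$ is invertible this is immediate: take $G=\mathbb Z$ acting by $n\mapsto T^n$ and the F\o lner sequence $F_N=\{0,1,\dots,N\}$ in $\mathbb Z$. In general $T$ need not be invertible, so I would pass to the natural extension $\tilde X=\{(x_i)_{i\ge 0}\in X^{\mathbb N}\colon Tx_{i+1}=x_i\}$, a separable metric space under the product metric, with $\tilde T(x_0,x_1,\dots)=(Tx_0,x_0,x_1,\dots)$ an invertible Borel automorphism and $\tilde\mu$ the natural-extension measure, which is $\tilde T$-invariant and satisfies $(\pi_0)_*\tilde\mu=\mu$, where $\pi_0(\tilde x)=x_0$. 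Applying Theorem~\ref{thm4.4} to $\mathbb Z\curvearrowright\tilde X$ with the Borel observable $\varphi=\pi_0$ and the F\o lner sequence $F_N=\{0,\dots,N\}$, for each $k\ge 1$ I obtain a $\tilde\mu$-conull set on which, for every $\tilde x=(x_i)$, the set $\{n\ge 0\colon d(x_0,T^nx_0)<1/k\}$ has positive upper density, hence is infinite; since this condition depends only on $x_0$, that conull set is $\pi_0^{-1}(X_k)$ for some $X_k\in\mathscr B_X$ with $\mu(X_k)=\tilde\mu(\pi_0^{-1}X_k)=1$. Alternatively the natural extension can be avoided entirely: Theorem~\ref{thm4.3} rests only on the $L^2$-convergence of the Ces\`aro averages $\frac1N\sum_{i=1}^N T^i$, which holds for the isometry induced by $T$ on $L^2(\mu)$ by von Neumann's theorem, so the whole argument runs verbatim for the semigroup $\mathbb N$ with $F_N=\{1,\dots,N\}$.

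Finally, set $Q=\bigcap_{k\ge 1}X_k$, so $\mu(Q)=1$. For $x\in Q$ I would construct the sequence $n_k$ recursively: given $n_1<\dots<n_{k-1}$, choose $n_k>n_{k-1}$ lying in the infinite set $\{n\ge 0\colon d(x,T^nx)<1/k\}$. Then $n_k\to\infty$ and $d(x,T^{n_k}x)<1/k\to 0$, i.e. $T^{n_k}x\to x$, which is the assertion. The only step that genuinely requires care is the reduction handling the possible non-invertibility of $T$ (the natural-extension construction, or the remark that only nonnegative Ces\`aro averages enter); everything else is the routine diagonal choice.
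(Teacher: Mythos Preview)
Your approach is correct and matches the paper's intended route: the corollary is stated immediately after the observation that Theorem~\ref{thm4.4} with $\varphi=\mathrm{id}_X$ shows $\mu(Q_{\textit{wap}}(G\curvearrowright X))=1$, and the paper gives no further proof. Two small remarks. First, Theorem~\ref{thm4.4} already produces a single $\mu$-conull set $Q$ good for \emph{every} $\epsilon>0$ (the countable intersection over $\epsilon_\ell\downarrow 0$ is carried out inside its proof), so your separate sets $X_k$ and the intersection $Q=\bigcap_k X_k$ are unnecessary---one invocation suffices, and then your recursive choice of $n_k$ goes through verbatim. Second, you are right to flag invertibility: the paper's entire framework is for group actions and the discussion preceding the corollary tacitly treats $T$ as generating a $\mathbb{Z}$-action, so the non-invertible case is not literally covered. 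Of your two fixes, the second (run Theorems~\ref{thm4.3}--\ref{thm4.4} for the semigroup $\mathbb{N}$ with $F_N=\{1,\dots,N\}$, using that von Neumann's theorem holds for the isometry $T$ on $L^2(\mu)$ and that the Arzel\`a--Ascoli argument behind Theorem~\ref{thm0.5} needs only the averaging operators, not a group) is the cleaner one; the natural-extension route is fine in spirit but requires a word on why the inverse-limit measure $\tilde\mu$ exists on the topological space $\tilde X$ when $X$ is merely separable metric rather than Polish.
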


\textit{Why do we need to introduce the concept \textit{q.w.a.p. point} for amenable group actions?} Let's recall that for any cyclic topological dynamical system $T\colon X\rightarrow X$, a point $x$ is called \textbf{\textit{recurrent}} iff $\exists n_k\to\infty$ such that $T^{n_k}x\to x$ as $k\to\infty$ (cf.~\cite{Fur}). For a group action dynamical system, $G\curvearrowright_SX$, in many literature a point $x$ is said to be recurrent if $\exists t_n\in G$ such that $S_{t_n}x\to x$ as $n\to\infty$. However, this cannot actually capture the recurrence of the orbit $G(x)$; for example, to a one-parameter flow $\mathbb{R}\curvearrowright_SX$, $S_{t_n}x\to x$ for any $t_n\to 0$ in $\mathbb{R}$.
In view of this, we need to introduce the upper density $\mathrm{D}^*(x,\epsilon)>0$ over a F{\o}lner net in $G$ instead of $n_k\to\infty$.
\section*{\textbf{Acknowledgments}}%
This work was partly supported by National Natural Science Foundation of China grant $\#$11271183 and PAPD of Jiangsu Higher Education Institutions.



\end{document}